\definecolor{red}{rgb}{1.0,0.0,0.0}
\definecolor{blu}{rgb}{0.0,0.0,1.0}
\definecolor{gre}{rgb}{0.03,0.50,0.03}
\newtheorem{lemma}{lemma}[section]
\newtheorem{theorem}[lemma]{Theorem}
\newtheorem{prop}[lemma]{Proposition}
\newtheorem{cor}[lemma]{Corollary}
\newtheorem{defn}[lemma]{Definition}
\newtheorem{prob}{Problem}
\newtheorem{rem}[lemma]{Remark}
\newtheorem{hypo}[lemma]{Hypothesis}
\newcommand{\Hh}{\mathcal{H}}
\newcommand{\R}{\mathbb{R}}
\def\<{\left\langle }
\def\>{\right\rangle }
\def\H{\mathbb H}
\def\R{\mathbb R}
\def\N{\mathbb N}
\def\E{\mathbb E}
\def\P{\mathbb P}
\def\1{\mathbf 1}
\def\to{\rightarrow}
\begin{document}

\title{\bf Wage rigidity and retirement in portfolio choice}
\author{
	Sara Biagini\footnote{Biagini(\texttt{sbiagini@luiss.it})
		is at the
		Dipartimento di Economia e Finanza,
		LUISS University, Rome, Italy.}
\and Enrico Biffis\footnote{Biffis (\texttt{e.biffis@imperial.ac.uk}) is at the Department of Finance, Imperial College Business School, London SW7 2AZ, UK.}
\and Fausto Gozzi
\footnote{Gozzi (\texttt{fgozzi@luiss.it})
is at the
 Dipartimento di Economia e Finanza,
 LUISS University, Rome, Italy.}
\and
Margherita Zanella\footnote{Zanella (\texttt{margherita.zanella@polimi.it})
is at the
 Dipartimento di Matematica, Politecnico di Milano, Italy.}
 }

\date{\today}

\maketitle

\begin{abstract}                          
We study an agent's lifecycle portfolio choice problem with stochastic labor income, borrowing constraints and a finite retirement date. Similarly to \cite{BGP}, wages evolve in a path-dependent way, but the presence of a finite retirement time leads to a novel,  two-stage infinite dimensional stochastic optimal control problem with  explicit  optimal controls in feedback form.
This is possible as we find an explicit solution to the associated Hamilton-Jacobi-Bellman (HJB) equation, which is an infinite dimensional PDE of parabolic type.
The identification of the optimal feedbacks is delicate
due to
the presence of
time-dependent state constraints,  which appear to be new in the infinite dimensional stochastic control literature.
The explicit solution allows us to study the properties of optimal strategies and discuss their implications for portfolio choice.
As opposed to models with Markovian dynamics, path dependency can now modulate the hedging demand arising from the implicit holding of risky assets in human capital, leading to richer asset allocation predictions consistent
with wage rigidity and the agents learning about their earning potential.\\
\noindent \textbf{AMS classification}:
34K50,
93E20,
49L20,
35R15,
91G10,
91G80\\
\noindent {\bf JEL  classification:}  C32,  D81, G11, G13, J30.
\end{abstract}

\section{Introduction}
  We study the lifecycle portfolio problem of an agent receiving stochastic labor income and facing borrowing constraints as well as a finite retirement time.  Labor income is affected by economic shocks with some delay, which may characterize the stickyness of wages documented at least since \cite{KEYNES}.
Several studies have shown how discrete time Autoregressive Moving Average Processes (ARMA) can match the empirical evidence on earnings in a number of settings  (e.g.,
 	\cite{MaCurdy_1982},\cite{HUBBARD_SKINNER_ZELDES_1995},
 	\cite{MOFFITT_GOTTSCHALK_2002},
 	\cite{MEGHIR_PISTAFERRI_2004}), but the introduction of path dependency  in continuous time portfolio choice problems has traditionally been very challenging and analytical solutions have remained elusive (e.g., \cite{BENZONI_ET_AL_2007,DYBVIG_LIU_JET_2010}).
 	Contributions \cite{BGPZ,BGP} exploited the fact that under certain conditions Stochastic Delay Differential Equations (SDDEs)  can be understood as the weak limits of discrete time ARMA processes (e.g., \cite{DUNSMUIR_GOLDYS_TRAN_2016,LORENZ_2006,REISS_2002,TranPHD}) to introduce realistic labor income features in continuous time portfolio choice problems while retaining analytical tractability.  Similarly, \cite{YCW2022,YW2022} used SDDEs to model  cointegrated asset dynamics  and succeeded in solving continuous time mean-variance problems with and without pre-commitment, respectively.

\smallskip

The most tractable SDDEs are the linear ones, which are used by \cite{BGP} (see also \cite{DGZZ}) to model log-labor income $y$ with a delay term appearing in the drift.  The delay term weighs the past path of $y$ over a bounded time window $[-d, 0]$ with a Lebesgue-square integrable  function $\phi:[-d,0]\rightarrow \mathbb{R}$ (see \cite{BGPZ} for the case of a delay appearing also in the volatility term). As the agent can borrow against future labor income,  the budget constraint requires that total wealth must be non-negative, where total wealth is the sum of financial wealth and the current market value of future labor income (human capital).
Although contribution  \cite{BGPZ} does not focus on optimization, it provides an explicit formula for human capital when it is driven by an SDDE, thus opening the way to solving explicitly the infinite horizon portfolio problem studied in \cite{BGP}. 
There, the authors consider power utility and solve an optimal portfolio and consumption choice problem in which agents are allowed to borrow against future labor income, as in  \cite{DYBVIG_LIU_JET_2010}. The solution strategy follows the dynamic programming approach. As such an approach applies only in a Markovian setting, the key step is to rewrite the problem extending the state space
to include both current wealth/labor income and the past trajectory of labor income as state variables.
This brings to study an infinite dimensional HJB equation, which can be seen as an infinite dimensional version of the classical HJB for the Merton problem with labor income.
 A similar approach is used in \cite{BGZ}, which does not consider retirement but allows the weight appearing in the delay term to be a general Radon measure taking values in an uncertainty set.
 We must note that such type of problems could be treated also with the Maximum Principle approach, as done in different contexts, for example,
\cite{LiChenWu2020}, \cite{Yu2012} \cite{WangZhang2013}, \cite{ChenWu2010} \cite{ExarchosTheodorou2018}.

\smallskip

 The novelty of this paper is the  introduction of a finite retirement date,  which leads to two main  mathematical problems. First, conversion of
  the borrowing constraint into a Markovian form is
  considerably more challenging than in the infinite horizon case considered in \cite{BGP}. Second, the infinite dimensional stochastic control problem now features
discontinuous dynamics and time dependent state constraints.  As far as we know,  the treatment of such family of problems  appears to be new in the literature. 
As in \cite{BGP}, the key is the derivation of human capital in explicit form, so that the constraint on total wealth can be properly addressed.  The result is more involved than the one offered in  \cite{BGP}, but reveals novel insights into how the market value of human capital trades off the past contribution of labor income against the shrinking time to retirement. This has considerable bearing for the optimal controls, which are time varying not only in recognition of the residual time to retirement, but also because of the interplay between the past and future contribution  of labor income to human capital and hence total wealth. In line with \cite{VICEIRA}, a hedging demand arises in the allocation to the risky asset because of the market risk channelled by risky wages (e.g., \cite{CV}), but its structure now supports a wider range of empirical predictions than found in the extant literature, as hedging demand is shaped by the joint effect of path dependent wages and the residual time to retirement.


\smallskip

The problem considered in this paper is amenable to interesting extensions and ramifications. A first direction is the consideration of tighter borrowing contraints, which even in the case of fully spanned labor income would prevent the agent from perfectly hedging labor income risk; see \cite{EK-JP1998} and references therein.  A second direction is the case in which labor market participation is endogenous, which can be captured by a controlled labor income process (see \cite{MOSTOVYI2017} for a general semimartingale setting) or by making the retirement date $\tau_R$ a stopping time controlled by the agent (as in \cite{DYBVIG_LIU_UNPUBLISHED,DYBVIG_LIU_JET_2010} in a Markovian environment); {see \cite{CHOI2008,BBM2023} for the case in which both continuous labor supply and irreversible retirement are endogeneous.} These extensions are an open problem within our setting and are left for future research.

\smallskip

The paper is organized as follows. In the next section, we introduce the model and  associated expected utility maximization problem. Section~\ref{sec3} then deals with the valuation of human capital and the Markovian representation of the borrowing constraint. This follows from a non trivial analysis, part of which is
relegated to the Appendix. We then rewrite the
path dependent state equation into a Markovian stochastic differential equation (SDE) in infinite dimension. This allows us to reformulate the original portfolio problem in a fully Markovian way.
Section~\ref{SSE:PBREFORMOLATED} is dedicated to the decoupling of the Markovian problem into a two-stage optimal control problem. In particular, we consider in sequence the problems faced by the agent before retirement (finite horizon) and after retirement (infinite horizon) .
In Sections~\ref{Vih_sec} and \ref{Vfh_sec},
we apply the Dynamic Programming (DP) approach to each of the
two problems separately. The infinite horizon problem follows the results of \cite{BGP}; see Section~\ref{Vih_sec}.
The finite horizon problem, which is fully solved in Section \ref{Vfh_sec}, is more challenging, as the associated infinite dimensional HJB equation is of parabolic type and with time dependent
state constraints.  By making an educated guess, however, we can find an explicit solution whose
verification requires some effort. We obtain the optimal controls in explicit feedback form, which is used to discuss
the properties of the optimal strategies.
Finally, Section~\ref{main_result_section}  offers a summary of the main results and their financial implications.
All proofs are put in the appendix which is available online.

\smallskip

\section{Assumptions and problem statement}
\label{Problem formulation}
\smallskip

On a filtered probability space $(\Omega, \mathcal F, \mathbb F, \mathbb P)$,  consider the price of a riskless bond $S_0$ and  an  adapted, vector valued process $S$ representing the price evolution of $n$ risky assets, $S=(S_1,\ldots,S_n)^\top$, with dynamics given by

\begin{eqnarray}\label{DYNAMIC_MARKET}
\left\{\begin{array}{ll}
{\rm d}S_0(s)= S_0(s) r  {\rm d}s, & S_0(0)=1;
\\
{\rm d}S(s) =\text{diag}(S(s)) \left(\mu {\rm d}s + \sigma {\rm d}Z(s)\right), &  S(0)\in {\mathbb R}^n_{+},
\end{array} \right.
\vspace{-0.3truecm}
\end{eqnarray}
where we assume the following.

\smallskip

\begin{hypo}\label{hp:S}
\begin{itemize}
\item[] $\mbox{ }$ \vspace{0.1cm}

\item[(i)]
$Z$ is an $n$-dimensional (standard) Brownian motion. The filtration $\mathbb F:=(\mathcal F_t)_{t \ge 0}$ is augmentation of the one generated by $Z$.

\item[(ii)] the parameter vector $\mu$ is $\mathbb R^n$-valued and the matrix $\sigma \in  \mathbb R^{n \times n}$ is invertible.
\end{itemize}
\end{hypo}


\smallskip

An agent is endowed with initial wealth $w\ge 0$ and receives wages at labor income rate $y$ until retirement or death, whichever occurs first. The death time  $\tau_{\delta}$  is modeled as an exponential random variable with parameter $\delta >0$,  independent of $Z$. The retirement date is  fixed  and set equal to $\tau_R \in \mathbb{R}_+$.
  The reference filtration is the minimal enlargement $\mathbb G := \big( \mathcal G_t \big)_{t \ge 0}$ of $\mathbb{F}$ satisfying the usual conditions and making the death indicator process  $D := (\mathbb{I}_{[0,\tau_{\delta}]}(t))_t$ adapted. Equivalently, we are considering the minimal enlargement of $\mathbb F$ making $\tau_{\delta}$ a  stopping time.  Each sigma-field $\mathcal{G}_t$ can be shown to be given by (e.g., \cite[Proposition 1.12]{AKSAMITJEANBLANC17}):

$$
\mathcal G_t:=   \mathcal F_t \vee
\sigma_g\left( D_u: u \leq t \right)
$$
where $\mathcal{A}\vee \mathcal{B}$ indicates the sigma algebra generated by $\mathcal{A,B}$ and   $\sigma_g ( \cdot)$   denotes the sigma-field generated by the random variable in brackets.
By  \cite[Proposition 2.11-(b)]{AKSAMITJEANBLANC17}, we know that for every   ${\mathbb G}$-predictable process $\tilde{A}$  we can find a  process  ${A}$ which is ${\mathbb F}$-predictable and satisfies, $\mathbb P$-almost surely, the followig condition:

 \begin{equation}\label{eq:GFpred}
 A(s,\omega) = \widetilde{ A}(s,\omega) \qquad
 \forall s \in [0,\tau_\delta(\omega)].
\end{equation}
The processes $A$ and $\widetilde{A}$ are therefore indistinguishable up to $\tau_\delta$. We refer to ${A}$ as to
the pre-death version of $\tilde{A}$.  From now on, we will work with  ${\mathbb F}$-predictable,
pre-death versions of all processes considered,
including state variables and controls.

The agent consumes at rate $c(\cdot)\geq 0$ and can invest in the riskless and risky assets.  We denote by the vector $\theta(\cdot)\in \mathbb R^n$ the wealth amounts allocated to the risky assets.
The agent can also purchase life insurance to reach a bequest target $\widetilde B(\tau_\delta)$ at death. In the pre-death world
this simply means that the agent chooses a control $B(\cdot)\geq 0$.
The pre-death insurance premium paid by the agent is $\delta(B(t)-W(t))$.
As in \cite{DYBVIG_LIU_JET_2010}, we interpret a negative face value $B(t)-W(t)<0$ as a term life annuity trading wealth at death for a positive income flow $\delta(W(t)-B(t))$ while living.   The controls $c, \theta$, and $B$  are for the moment assumed to belong to the following set:

\begin{align}
\Pi_0&:= 	
  \Big\{ \mathbb F\mbox{-predictable triplets }
\left(\left(   c(\cdot), B(\cdot) \right), \theta(\cdot)\right)
\mbox{ in }   \notag \\
& L^1_{loc} (\Omega \times \R_+; \mathbb R^2_{+}) 	
\times L^2_{loc}(\Omega \times \R_+ ; \mathbb R^n)\Big\}, \label{DEF_PI0_FIRST_DEFINITION}
\vspace{-0.5truecm}
\end{align}
where $f\in L^p_{loc} (\Omega \times \R_+; U)$ stands for the integrability condition
$\E [\int_0^T |f(t)|^p dt]< +\infty$ being satisfied for all $T>0$.
We denote by $W$ the agent's financial wealth  and by $y$ the   labor income rate process.
As opposed to standard bilinear SDEs (e.g., \cite{DYBVIG_LIU_JET_2010}), we assume here  the process $y$   to follow  a bilinear SDDE with delay in the drift. For fixed memory time $d>0$,   the delay term weights the realization of labor income within the time window  $[t-d,t]$ by a time independent function $\phi \in  L^2\left(-d,0; \mathbb R\right)$.  The  dynamics of the state variable pair $(W,y)$ is then given by:

\small{\begin{equation}
\label{DYNAMICS_WEALTH_LABOR_INCOME}
\begin{cases}
 {\rm d}W(s) =  \big[W(s) r + \theta(s)^\top (\mu-r\mathbf{1})
+ (1-R(s))y(s) - c(s)\\
  \ \qquad \qquad -\delta\left(B(s)-W(s)\right)\big] {\rm d}s
 + \theta(s)^\top \sigma
{\rm d}Z(s), \\[1mm]
 {\rm d}y(s)  =  \left[ y(s) \mu_y
+\int_{-d}^0 \phi(s) y(\zeta+s) {\rm d}\zeta  \right]{\rm d}s
+ y(s)\sigma_y^\top    {\rm d}Z(s),\\[1mm]
W(0) = w,\qquad y(0)=  x_0, \quad y(\zeta) = x_1(\zeta)
\mbox{ for $\zeta \in  [-d,0)$}.
\end{cases}
\end{equation}}
with $s\ge0$, $\mu_y,w,x_0 \in  \mathbb R$ and where the triplets $(c(\cdot),B(\cdot),\theta(\cdot))$ are as in
\eqref{DEF_PI0_FIRST_DEFINITION},
$\sigma_y \in  \mathbb R^n$,
$\mathbf 1 = (1,\dots, 1)^\top$ denotes the unitary vector in $\mathbb R^n$
and
$x_1(\cdot)$ lives in $L^2\left(-d,0; \mathbb R\right)$.
The function  $R(s):=   \mathbb I_{\{\tau_R \le s \}}$ represents a retirement indicator,
where $\tau_R\in \mathbb R_{++}$ represents a deterministic retirement date. This realistic feature is absent in \cite{BGP}, which only considers the case without retirement, i.e., $\tau_R = + \infty$ and $R(s) = 0$~a.e..
Existence and uniqueness of a strong solution 
to the SDDE for $y$ is ensured by Theorem I.1 and Remark~I.3(iv) 
in \cite{MOHAMMED_BOOK_96}.
Existence and uniqueness of a strong solution to the SDE for $W$ then follows, for example, from
\cite[Chapter 5.6.C]{KARATZSAS_SHREVE_91}.
As is clear from dynamics \eqref{DYNAMICS_WEALTH_LABOR_INCOME}, in our baseline model labor income is perfectly instantaneously correlated with the risky assets. This is the benchmark that will be used to develop the solution of the portfolio choice problem.

We study the problem of maximizing the agent's expected utility of lifetime consumption and bequest:

$$
\mathbb E \left[\int_{0}^{\tau_\delta} e^{-\rho s }
\left( \frac{(K^{R(s)}\widetilde c(s))^{1-\gamma}}{1-\gamma}
+  \frac{\big(k \widetilde B(\tau_\delta)\big)^{1-\gamma}}{1-\gamma}\right) \,{\rm d}s
\right],
$$
which can be reduced to the following one after exploiting the fact that the death time is exponentially distributed:
{\small{ \begin{eqnarray}\label{OBJECTIVE_FUNCTION}
\mathbb E \left[\int_{0}^{+\infty} e^{-(\rho+ \delta) s }
\left( \frac{(K^{R(s)}c(s))^{1-\gamma}}{1-\gamma}
+ \delta \frac{\big(k B(s)\big)^{1-\gamma}}{1-\gamma}\right) \,{\rm d}s
\right].
\end{eqnarray}}
Optimization is carried out over all triplets $\left(c,\theta,B\right)$ taken as in
\eqref{DEF_PI0_FIRST_DEFINITION} and satisfying a suitable state constraint introduced in
\eqref{NO_BORROWING_WITHOUT_REPAYMENT_CONDITIONLA_MEAN} further below.
In the above, the parameter $K>1$ allows the utility from consumption to
differ before and after time $\tau_R$, whereas $k>0$ measures
the intensity of preference for leaving a bequest. The parameter $\gamma$
is the relative risk aversion coefficient, which is assumed to satisfy $\gamma \in (0,1) \cup (1, +\infty)$. Finally, the
parameter  $\rho >0$ denotes the agent's subjective discount rate.
%

In the reference financial market  \eqref{DYNAMIC_MARKET},  the pre-death state-price density
of the agent
is unique and is given by:

\begin{equation}\label{DYN_STATE_PRICE_DENSITY}
\left\{\begin{array}{ll}
{\rm d} \xi (s)& = - \xi(s)(r +\delta) {\rm d}s
-\xi(s) \kappa^\top {\rm d}Z(s),\\
\xi(0)&=1.
\end{array}\right.
\end{equation}
where $\kappa$ is the market price of risk and is defined as follows
(e.g., \cite{KARATZSAS_SHREVE_91}):
\begin{equation}\label{DEF_KAPPA}
\kappa:= (\sigma)^{-1} (\mu- r \mathbf 1).
\end{equation}
Denote  now by   $H$  the
current market
value of future wages, or human capital:
\begin{equation}
\label{HC}
H(s):=  \xi^{-1}(s)\mathbb E\Big( \int_s^{+\infty} \xi(u) (1-R(u)) y(u)
 \, {\rm d}u \Big \vert  \mathcal F_s\Big).
\end{equation}
The agent's budget constraint is then assumed to satisfy:
\vspace{-0.3truecm}
\begin{equation}\label{NO_BORROWING_WITHOUT_REPAYMENT_CONDITIONLA_MEAN}
W(s) +   H(s)  \geq 0, \ \ \ \forall  s\in [0,\infty).
\vspace{-0.3truecm}
\end{equation}
The above means that human capital, in addition to financial wealth, can be pledged as collateral
to support investment and consumption.  
Note that the agent cannot default on his/her debt obligations upon death, as
$B$ is assumed to be nonnegative.
Constraint \eqref{NO_BORROWING_WITHOUT_REPAYMENT_CONDITIONLA_MEAN}
was introduced as a no-borrowing-without-repayment condition in \cite{DYBVIG_LIU_JET_2010}. Its impact on portfolio choice was explored numerically  in the context of mean-reverting labor income dynamics chosen as a proxy for sticky wages.
Let us denote by $W^{w,x}(s; c,B,\theta)$ and $y^{x}(s)$ the solutions
of system \eqref{DYNAMICS_WEALTH_LABOR_INCOME} at time $s\geq 0$, and by $H^{x}(s)$ the corresponding human capital defined in \eqref{HC},  where
we emphasize the dependence of these quantities
on the initial conditions $(w, x)$ and strategies
$(c,B,\theta)$, where relevant.
We can then define the set of admissible controls, as follows:
\begin{align}
\Pi\left(w,x\right) &:=
\Big\{
\left(c(\cdot), B(\cdot), \theta(\cdot)\right)
\in
\Pi_0
\mbox{ \ such that:}\notag \\
&  W^{w,x}\left(s; c,B,\theta\right) +  H^{x}(s)\geq 0
 \,\quad \forall \ s\geq 0 \Big\}.
\label{DEF_PI_FIRST_DEFINITION}
\end{align}
Our problem is then to maximize the functional given
in \eqref{OBJECTIVE_FUNCTION}
over all controls in $\Pi\left(w,x\right)$.
Let us now
introduce the effective discount rate for labor income (e.g., \cite{DYBVIG_LIU_JET_2010}), which is assumed to be positive:
\vspace{-0.4truecm}
\begin{equation}\label{eq:defbeta}
\beta:= r+\delta - \mu_y+  \sigma_y^\top \kappa> 0.
\vspace{-0.3truecm}
\end{equation}
Consider also the following  assumption, which
is the analog of Merton's condition for the maximization problem to be well-posed and will be recalled explicitly
whenever needed:
\vspace{-0.3truecm}
\begin{hypo}
\label{hp:betarho}
The model parameters satisfy the inequality
$\rho + \delta -(1-\gamma)
(r + \delta +\frac{\kappa^\top \kappa}{2\gamma }) >0$.
\end{hypo}

\vspace{-0.3truecm}

\begin{rem}
In paper \cite{BGP}, a transversality condition is required on account of the infinite horizon problem; this imposes an additional restriction on $\beta$, see
  \cite[Hypothesis 2.4-(i)]{BGP} and Remark~\ref{rm:lesshpforconstraint} further below.
Hypothesis~\ref{hp:betarho}, on the other hand, is required here to ensure that the value function is finite and coincides with \cite[Hypothesis 2.4-(ii)]{BGP}, as it depends only on the target functional and the wealth equations;
see Remark \ref{rm:newsec5} for further discussion.
\end{rem}

\vspace{-0.3truecm}

Our problem can be summarized as follows:

\vspace{-0.3truecm}

\begin{prob}
\label{p0}
Given initial wealth $w$, initial income and income history pair $x=(x_0,x_1)$, and
retirement date $\tau_R$,
with associated indicator function
$R(s):= \mathbb I_{\{\tau_R \le s \}}$,
choose consumption, bequest, and investment triplets
$\left(c(\cdot),B(\cdot),\theta(\cdot)\right)$ so as
to maximize expected utility
\eqref{OBJECTIVE_FUNCTION}
under state equation \eqref{DYNAMICS_WEALTH_LABOR_INCOME}
and subject to the borrowing constraint
\eqref{NO_BORROWING_WITHOUT_REPAYMENT_CONDITIONLA_MEAN}.
\end{prob}

\vspace{-0.3truecm}

\section{Rewriting the state equation and the constraint}\label{sec3}

\vspace{-0.3truecm}

In section~\ref{sec:infdimframework}, we briefly recall
how to rewrite the labor income equation as a Markovian SDE in the
Delfour-Mitter space $M_2$, which is an infinite dimensional Hilbert space.
In Subsection \ref{Rephrasing the no-borrowing constraint},
we derive the explicit expression for the agent's human capital
and rewrite the constraint in $M_2$.

\subsection{Reformulation of the SDDE for 
labor income.}
\label{sec:infdimframework}

{We follow the main ideas of \cite[Subsection 3.1]{BGP}, but consider
the generic initial time $t\ge 0$ instead of time zero.
The state equation for the labor income $y$ is an SDDE, which means that $y$ is not Markovian and the dynamic programming principle
does not apply in its standard formulation.}
As usual, it is convenient to reformulate the problem in an
infinite dimensional Hilbert space, which takes into account both the present and past values of the states, thus
ensuring  the Markov property of the states
(see, for example, \cite{VINTER} and
\cite{CHOJNOWSKA-MICHALIK_1978}, or
\cite[Section 0.2]{DAPRATO_ZABCZYK_RED_BOOK} and
\cite[Section 2.6.8]{FABBRI_GOZZI_SWIECH_BOOK}).

To be precise, we introduce the Delfour-Mitter Hilbert space
$M_2$ (see e.g. \cite[Part II, Chapter 4]{BENSOUSSAN_DAPRATO_DELFOUR_MITTER}):

\begin{equation*}
M_2 := \mathbb R \times L^2\big( -d, 0; \mathbb R \big),
\end{equation*}
with inner product, for $x=(x_0, x_1), y=(y_0, y_1) \in M_2$, defined as
$\langle x,y \rangle_{M_2}:= x_0 y_0 + \langle x_1, y_1 \rangle_{L^2}$,
where
$$
\langle x_1, y_1 \rangle_{L^2}:=\int_{-d}^0 x_1(\zeta) y_1(\zeta) d\zeta.
$$
For ease of notation, we will drop below the subscript $L^2
= L^2\big( -d, 0; \mathbb R \big)$
from the inner product of such space, writing simply
$\langle x_1, y_1\rangle$.
To embed the state $y$ of the original problem
in the space $M_2$ we introduce the linear operators
$A$ (unbounded) and $C$ (bounded).
Setting
\vspace{-0.3truecm}
\begin{equation*}
\mathcal D(A) :=\left\{(x_0,x_1) \in M_2: x_1(\cdot) \in
W^{1,2}\left( [-d, 0]; \mathbb R \right), x_0 = x_1(0)\right\},
\end{equation*}
the operator $A: \mathcal D (A) \subset M_2 \rightarrow M_2$ is defined as
\begin{equation}
\label{DEF_A}
A(x_0,x_1) := \left(\mu_y x_0 +\langle \phi,x_1 \rangle,
 x_1^{\prime}  \right),
\end{equation}
with $\mu_y,\phi$ appearing in equation (\ref{DYNAMICS_WEALTH_LABOR_INCOME}).
The operator $C:M_2 \rightarrow   \mathcal{L}(\R^n,\R)$
is bounded and defined as
\begin{equation*}
C(x_0,x_1):= x_0  \sigma_y^\top
\end{equation*}
where $\sigma_y $ shows up in (\ref{DYNAMICS_WEALTH_LABOR_INCOME}).
Proposition A.27 in \cite{DAPRATO_ZABCZYK_RED_BOOK} shows that
$A$ generates a strongly continuous semigroup in $M_2$.
Consider, for $t\ge 0$, $x\in M_2$,
the following stochastic differential equation in $M_2$:
\begin{equation}
\begin{cases}
\label{INFINITE_DIMENSIONAL_STATE_EQUATION}
d X(s) = A X(s) ds+ C X(s)dZ(s)\\
  X(t) = x \in M_2.
\end{cases}
\end{equation}
The equivalence between the equation for $y$ in \eqref{DYNAMICS_WEALTH_LABOR_INCOME} and the above \eqref{INFINITE_DIMENSIONAL_STATE_EQUATION} is formalized in the following Proposition.

\begin{prop}
\label{equiv_stoch}
For every $t\ge 0$ and $x=(x_0,x_1)\in M_2$ the
infinite dimensional equation \eqref{INFINITE_DIMENSIONAL_STATE_EQUATION} has a unique mild (and weak) solution with almost surely continuous trajectories,
denoted by $X^x(\cdot;t)=(X_0^x(\cdot;t),X_1^x(\cdot;t))$.
Moreover the equation for $y$ in (\ref{DYNAMICS_WEALTH_LABOR_INCOME}), possibly with initial time $t \ge 0$,
has a
unique mild solution, in a strong probabilistic sense, for every $x\in M_2$,
which we denote by $y^x(\cdot;t)$.
Finally, for every $s\ge 0$ and $\zeta \in [-d,0)$,
\vspace{-0.3truecm}
\begin{equation} \label{eq:Yy} \left(X_0^x(s;t),X_1^x(s;t)(\zeta)\right)=
\left(y^x(s;t),y^x(s+\zeta;t)\right).
\vspace{-0.3truecm}
\end{equation}
\end{prop}

We will denote the mild solution of (\ref{INFINITE_DIMENSIONAL_STATE_EQUATION}) with $X^x(\cdot)$
to underline its dependence on the initial condition. When no confusion is possible we may simply write $X(\cdot)$.
Note that $X$ now
 is Markovian.

We conclude this Section by a standard result about the adjoint operator of $A$ which will be used to derive the explicit solution of the HJB equation
in Section \ref{Vfh_sec}.

\begin{prop} \label{adj}
The adjoint operator of $(A, \mathcal{D}(A))$
is $A^*: \mathcal D(A^*)\subset M_2 \longrightarrow M_2$
and is given by:
\vspace{-0.3truecm}
\begin{equation*} \mathcal D(A^*):=\{(z_0,z_1) \in M_2: z_1 (\cdot) \in  W^{1,2}\big(-d, 0; \mathbb R \big), z_1(-d) = 0\},
\vspace{-0.3truecm}
\end{equation*}
\vspace{-0.3truecm}
\begin{equation} \label{EQ_ADJOINT_OPERATOR_A*} A^*(z_0,z_1):= \big(\mu_y z_0 +z_1(0), - z_1' + z_0 \phi \big).
\vspace{-0.4truecm}
\end{equation} \end{prop}

\noindent \subsection{A formula for human capital}
\label{Rephrasing the no-borrowing constraint}

After the preliminary Lemma \ref{LEMMA_EXISTENCE_g_h},
in Proposition \ref{PROPOSITION_EVALUATION_DELAYED_LABOR_INCOME} we derive
an explicit expression for the agent's human capital in the infinite
dimensional framework introduced in the previous section. The expression  extends the result
provided by \cite{BGPZ} to a finite horizon setting.
We then use it  in Corollary \ref{cor3.3}
to  suitably rewrite the
constraint
(\ref{NO_BORROWING_WITHOUT_REPAYMENT_CONDITIONLA_MEAN}).
\vspace{-0.3truecm}

\begin{lemma}
\label{LEMMA_EXISTENCE_g_h}
Let $\phi\in L^2(-d,0;\R)$ and $d,\tau_R >0$, $\beta \in \R$. Consider the system
\hspace*{0.2cm}
\vspace{-0.3truecm}
\begin{equation}\label{DEF_g_h}
\begin{cases}
g(t)=   \mathbb I_{\{t < \tau_R\}}\int_t^{\tau_R } e^{-\beta(\tau-t)}
 \big(h(\tau,0)+1 \big)\, {\rm d} \tau,
 \\
h(t,\zeta)=  \mathbb I_{\{t < \tau_R\}}* \\
\hfill*\int^\zeta_{(\zeta+t-\tau_R)\vee (-d)}   e^{- (r+ \delta) (\zeta-\tau)}
  g(t+\zeta-\tau)\phi(\tau)\,{\rm d}\tau,\\
 \mbox{with }   (t,\zeta)
  \in [0, \infty) \times [-d,0].
 \end{cases}
 \end{equation}
We have the following results:

\begin{itemize}
  \item[(i)]
The system \eqref{DEF_g_h} admits a unique solution, i.e., a pair $(g,h)$, with $g \in C(\R_+;\R)$ and $h\in C(\R_+\times [-d,0];\R)$, satisfying \eqref{DEF_g_h}.
Moreover $g$ and $h$ depend continuously on $\phi\in L^2(-d,0;\R)$. If $\phi$ is a.e. positive, then also $(g,h)$ is positive.

\item[(ii)]
Assume now that $\phi\in C^1([-d,0];\R)$ with $\phi (-d)=0$. Then the solution $(g,h)$ of  \eqref{DEF_g_h} belongs to $C^1([0,\tau_R];\R)\times C^1([0,\tau_R]\times [-d,0];\R)$ and satisfies the system
\begin{equation}
\label{CAUCHY_PROBLEM_g_h}
\begin{cases}
g^{\prime}(t) -\beta g(t)  +h(t,0) +1 =0, \hfill  t \in [0,\tau_R)
\vspace{0.1cm}\\
-(r+ \delta)h(t,\zeta)
+  \frac{ \partial h(t,\zeta)}{\partial t}
-\frac{ \partial h(t,\zeta)}{\partial \zeta}+ g(t) \phi(\zeta)  =0,
\\   \hfill t \in (0,\tau_R), \zeta \in (-d,0)
\vspace{0.1cm}\\
g(t)=0, \hfill  t \geq \tau_R
\vspace{0.1cm}\\
h({t},\zeta)=0, \hfill t\geq \tau_R, \, \zeta \in [-d,0]
\vspace{0.1cm}\\
h(t, -d)=0, \hfill  t \in [0,+\infty).
\end{cases}
\end{equation}

\item[(iii)]
If $\phi\in L^2(-d,0;\R)$ we have $g\in C^1([0\tau_R];\R)$
and the map
\vspace{-0.2truecm}
$$
[0,\tau_R] \to L^2(-d,0;\R), \qquad t \mapsto h(t,\cdot)
\vspace{-0.2truecm}
$$
belongs to $ C^1([0\tau_R];L^2(-d,0;\R))$.
From now on we will write $h(t)$ for $h(t,\cdot)$.

\item[(iv)]
It holds that $\big(g(t),h(t)\big) \in \mathcal D(A^*)$
for any $t \geq0$. Moreover
$(g,h) \in L^{\infty}(0,\tau_R;\mathcal{D}(A^*))$, that is
\vspace{-0.2truecm}
\begin{equation*}
\sup_{t \in [0, \tau_R)}\|(g(t),h(t)) \|_{\mathcal{D}(A^*)} < \infty.
\vspace{-0.2truecm}
\end{equation*}
Finally, the couple $(g,h)$ is a mild solution
(see e.g. \cite[Definition 1.119]{FABBRI_GOZZI_SWIECH_BOOK}
for the definition) of the Cauchy problem in $L^2(-d,0;\R)$:
\vspace{-0.2truecm}
\begin{equation}\label{CAUCHY_PROBLEM_g_hnew}
  \left(g(t),h(t)\right)'
  = \left(- A^*(g(t),h(t))
  +\left((\beta + \mu_y)g(t) -1, (r+\delta)h(t)\right)\right),
\vspace{-0.2truecm}
\end{equation}
($t\in[0,\tau_R)$) with zero final condition at $t=\tau_R$.
\vspace{-0.2truecm}
\item[(v)]
The function $r \mapsto g'(r)$ has a discontinuity in $r=\tau_R$.
\end{itemize}
\end{lemma}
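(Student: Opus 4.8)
The plan is to establish the five items in order, exploiting throughout that \eqref{DEF_g_h}, read backwards in time from $\tau_R$, is a linear Volterra system, and that every boundary term produced when its integral representations are differentiated is proportional to $g(\tau_R)=0$ (and, for item (ii), to $\phi(-d)=0$). For (i) I would substitute the $h$-equation into the $g$-equation and exchange the order of integration to obtain a scalar linear Volterra equation $g(t)=F(t)+\int_t^{\tau_R}K(t,u)g(u)\,{\rm d}u$ on $[0,\tau_R]$, with $g\equiv 0$ afterwards, where $F\in C([0,\tau_R];\R)$ and, by Cauchy--Schwarz on an interval of length $\le d$, $\sup_{t,u}|K(t,u)|\le C\|\phi\|_{L^2}$. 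A Picard iteration (equivalently, a Banach fixed point in a weighted sup-norm) then yields the unique $g\in C(\R_+;\R)$; $h$ is recovered from its formula and is jointly continuous, including at $t=\tau_R$ and $\zeta=-d$, where the integration interval degenerates to a point. Continuous dependence on $\phi\in L^2$ follows from that of $F$ and $K$ together with a Gronwall estimate for the Volterra equation, and nonnegativity of $g$ — hence of $h$ — from $F\ge 0$, $K\ge 0$ when $\phi\ge 0$, a sign preserved along the Picard iterates.

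For (ii) I would differentiate the integral equations directly. Differentiating the $g$-equation in $t$ gives $g'(t)=\beta g(t)-h(t,0)-1$. After the substitution $\sigma=t+\zeta-\tau$ the $h$-equation becomes $h(t,\zeta)=\mathbb I_{\{t<\tau_R\}}\int_t^{\tau_R\wedge(t+\zeta+d)}e^{-(r+\delta)(\sigma-t)}g(\sigma)\phi(t+\zeta-\sigma)\,{\rm d}\sigma$, and differentiating it in $t$ and in $\zeta$ yields $\partial_t h-\partial_\zeta h=(r+\delta)h-g\phi$; the boundary contribution at the upper limit $\sigma=t+\zeta+d$ is $e^{-(r+\delta)(\zeta+d)}g(t+\zeta+d)\phi(-d)$, which vanishes exactly because $\phi(-d)=0$. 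The conditions $h(t,-d)=0$ and $g(t)=h(t,\zeta)=0$ for $t\ge\tau_R$ are read off the defining formulas, so \eqref{CAUCHY_PROBLEM_g_h} holds; $C^1$ regularity then follows by a short bootstrap ($g\in C^0\Rightarrow h\in C^1$ through its $C^1$ integrand $\Rightarrow g'=\beta g-h(\cdot,0)-1\in C^0\Rightarrow g\in C^1$, and so on).

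For (iii)--(iv) I would pass to $\phi\in L^2$ and work with the representation $h(t)=\int_0^{\tau_R-t}e^{-(r+\delta)\lambda}g(t+\lambda)\,\Sigma_\lambda\phi\,{\rm d}\lambda$, where $\{\Sigma_\lambda\}$ is the nilpotent shift semigroup on $L^2(-d,0;\R)$, $(\Sigma_\lambda\psi)(\zeta)=\psi(\zeta-\lambda)\mathbb I_{\{\zeta-\lambda\ge -d\}}$ — strongly continuous, with generator $-\partial_\zeta$ on $\{\psi\in W^{1,2}(-d,0):\psi(-d)=0\}$, i.e. the $L^2$-part of the operator $A^*$ of Proposition \ref{adj}. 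Continuity of $g$ and strong continuity of the shift give $h(\cdot,0)\in C([0,\tau_R])$, whence $g'=\beta g-h(\cdot,0)-1$ shows $g\in C^1([0,\tau_R];\R)$; differentiating the representation in $t$, the boundary term at $\lambda=\tau_R-t$ equals $e^{-(r+\delta)(\tau_R-t)}g(\tau_R)\Sigma_{\tau_R-t}\phi=0$, leaving $\tfrac{{\rm d}}{{\rm d}t}h(t)=\int_0^{\tau_R-t}e^{-(r+\delta)\lambda}g'(t+\lambda)\Sigma_\lambda\phi\,{\rm d}\lambda\in C([0,\tau_R];L^2)$, so $h\in C^1([0,\tau_R];L^2(-d,0;\R))$. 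For (iv), computing the weak $\zeta$-derivative of $h(t,\zeta)=\int_{(\zeta+t-\tau_R)\vee(-d)}^{\zeta}e^{-(r+\delta)(\zeta-\tau)}g(t+\zeta-\tau)\phi(\tau)\,{\rm d}\tau$ produces $g(t)\phi(\zeta)$ plus a Hilbert--Schmidt integral operator applied to $\phi$, the lower-endpoint boundary term vanishing because it carries $g(\tau_R)=0$; hence $h(t,\cdot)\in W^{1,2}(-d,0)$ with $h(t,-d)=0$, i.e. $(g(t),h(t))\in\mathcal D(A^*)$, and the uniform bound in $\mathcal D(A^*)$ follows from $\|g\|_{C[0,\tau_R]}$, $\|g'\|_{C[0,\tau_R]}$ and $\|\phi\|_{L^2}$ since all integration windows have length $\le d$. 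Finally, inserting the formulas for $g'$, $\partial_t h$ and $\partial_\zeta h$ into \eqref{CAUCHY_PROBLEM_g_hnew} and using \eqref{EQ_ADJOINT_OPERATOR_A*} verifies the equation componentwise, so $(g,h)$ is a strong solution; reversing time turns the backward problem into a well-posed forward Cauchy problem whose generator is $A^*$ minus a bounded operator, and a strong solution of the latter is its mild solution, which closes (iv).

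Item (v) is then immediate, since $g'\equiv 0$ on $(\tau_R,+\infty)$ whereas $g'(\tau_R^-)=\beta g(\tau_R)-h(\tau_R,0)-1=-1$. The step I expect to be the main obstacle is the $L^2$-level analysis behind (iii)--(iv): obtaining $h\in C^1([0,\tau_R];L^2)$ and $h(t,\cdot)\in W^{1,2}$ with the correct boundary value when $\phi$ is merely square-integrable relies both on choosing the shift-semigroup representation and on the fact that every boundary term generated by the moving integration limits is proportional to $g(\tau_R)=0$ (and in (ii) to $\phi(-d)=0$). A secondary subtlety is that $-A^*$ does not generate a forward semigroup, so the notions of mild and strong solution of \eqref{CAUCHY_PROBLEM_g_hnew} must be formulated after time reversal; once that is done, approximating $\phi\in L^2$ by $\phi_n\in C^1$ with $\phi_n(-d)=0$ and invoking (i)--(ii) gives an alternative route to the equation by passing to the limit.
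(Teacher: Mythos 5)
Your proposal is correct and reaches all five items, but by a partly different route than the paper. For (i) the paper keeps the pair $(g,h)$ as the unknown and applies the parameter-dependent contraction principle in $\mathcal{M}=C([0,\tau_R];\mathbb R)\times C([0,\tau_R]\times[-d,0];\mathbb R)$ (iterating on subintervals when the contraction constant is not small), getting continuous dependence on $\phi$ from the parameterized fixed-point theorem and positivity from invariance of the positive cone; your reduction to a scalar Volterra equation for $g$ with kernel bounded by $C\|\phi\|_{L^2}$, plus Gronwall for continuous dependence and sign preservation along the Picard iterates, is an equally valid and somewhat more elementary alternative. For (ii) your single change of variables $\sigma=t+\zeta-\tau$ streamlines the paper's case distinction $\tau_R-t\lessgtr\zeta+d$ with integration by parts; note that the corner $\tau_R-t=\zeta+d$ is precisely where the two vanishing boundary terms (one carrying $\phi(-d)=0$, the other $g(\tau_R)=0$) are needed to glue the two expressions into a single $C^1$ function, as the paper observes. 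For (iii) the paper computes difference quotients of $t\mapsto h(t,\cdot)$ by hand in two cases and concludes by dominated convergence; your representation $h(t)=\int_0^{\tau_R-t}e^{-(r+\delta)\lambda}g(t+\lambda)\Sigma_\lambda\phi\,{\rm d}\lambda$ with the nilpotent shift semigroup and Leibniz differentiation of the Bochner integral is cleaner and structurally more transparent, since it exhibits the $L^2$-part of $A^*$ from the outset.

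Two points deserve care. First, continuity of $h(\cdot,0)$ does not follow from strong $L^2$-continuity of the shift (point evaluation at $\zeta=0$ is not continuous on $L^2$); use instead the joint continuity established in (i), or the explicit formula $h(t,0)=\int_0^{(\tau_R-t)\wedge d}e^{-(r+\delta)\lambda}g(t+\lambda)\phi(-\lambda)\,{\rm d}\lambda$. Second, in (iv) your ``direct'' computation of the weak $\zeta$-derivative for $\phi\in L^2$ — the term $g(t)\phi(\zeta)$ arises from the jump of the truncated kernel across $\tau=\zeta$ — is exactly the step the paper makes rigorous by approximating $\phi$ with $\phi_n\in C^1$, $\phi_n(-d)=0$, invoking (ii) for $h_n$ and using closedness of the derivative operator; this is the route you only mention as an alternative at the end, and you should either promote it to the main argument or supply the test-function/Fubini verification. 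Finally, in (v) your value $g'(\tau_R^-)=\beta g(\tau_R)-h(\tau_R,0)-1=-1$ is the correct one (the paper's displayed limit $+1$ is a sign slip); the conclusion, the discontinuity of $g'$ at $\tau_R$, is unaffected.
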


\vspace{-0.3truecm}

\begin{prop}
\label{PROPOSITION_EVALUATION_DELAYED_LABOR_INCOME}
Let $x \in M_2$ and let $y$ be the corresponding solution of the second equation of \eqref{DYNAMICS_WEALTH_LABOR_INCOME}.
The market value of Human Capital defined in \eqref{HC} admits the following representation: for $s\ge 0$, $\mathbb P$-a.s.

\begin{equation}
\label{1}
H(s)=g(s)y(s)+\int_{-d}^0h(s,\zeta)y(s+\zeta)\, {\rm d}\zeta,
\end{equation}
where the function 
$$
(g,h)\in C([0, +\infty);\R_+)\times
C([0, +\infty)\times [-d,0];\R_+)
$$
is the unique solution
of \eqref{DEF_g_h}.
Let now $X$ be the solution of equation
\eqref{INFINITE_DIMENSIONAL_STATE_EQUATION} with initial datum $x$.
Then we have for all $s \in [0, \infty), \quad \mathbb P-a.s.$,
\small{\begin{align}
\label{1bis}
H(s)=\<(g(s),h(s)),X(s) \>_{M_2}=g(s)X_0(s)+ \langle h(s),X_1(s)\rangle.
\end{align}
}
\end{prop}

As a direct consequence of Proposition \ref{PROPOSITION_EVALUATION_DELAYED_LABOR_INCOME} we rewrite the constraint given in \eqref{NO_BORROWING_WITHOUT_REPAYMENT_CONDITIONLA_MEAN} in a more convenient way.

\begin{cor}
\label{cor3.3}
For any $s \in[0,+\infty)$,
the constraint
(\ref{NO_BORROWING_WITHOUT_REPAYMENT_CONDITIONLA_MEAN})
can be equivalently rewritten as
\begin{eqnarray}\label{CONSTRAINT_FIN_RET}
W(s)+  g(s) X_0(s) +  \langle h(s) , X_1(s) \rangle \geq 0.
\end{eqnarray}
\end{cor}

\begin{rem}\label{rm:lesshpforconstraint}
The above Corollary \ref{CONSTRAINT_FIN_RET} is the finite retirement version of \cite[Theorem 2.1]{BGPZ} (see also
\cite[Proposition 3.3]{BGP}).
As we have finite retirement, here
we do not need to care about the long run behavior of labor income.
This means that we do not need additional restrictions
on $\beta$ and $\phi$ (e.g., \cite[Hypothesis 2.4-(i)]{BGP}).
\\
On the other hand,  the finite retirement time makes the rewriting of the constraint much more complicated, as the functions $g$ and $h$ now depend on time; this is the reason why we need to prove the long technical Lemma \ref{LEMMA_EXISTENCE_g_h}.
\end{rem}

\vspace{-0.3truecm}

We conclude this subsection with a remark on the financial meaning of the key functions $g$ and $h$.

\vspace{-0.3truecm}

\begin{rem}\label{rm:ghmeaning}
To better understand the role of the functions   $g$ and $h$ appearing in \eqref{DEF_g_h}, let us first rewrite $g$ by distinguishing two different components, $g_1$ and $g_2$:
\vspace{-0.2truecm}
\begin{align}\label{g12}
&g(t)=\mathbb I_{\{t<\tau_R\}}\int_t^{\tau_R } e^{-\beta(\tau-t)} \, {\rm d} \tau + \\
& + \mathbb I_{\{t<\tau_R\}} \int_t^{\tau_R } e^{-\beta(\tau-t)}
h(\tau,0) \, {\rm d} \tau= g_1(t)+g_2(t). \notag
\vspace{-0.2truecm}
\end{align}
The function $g$ is essentially an annuity factor capturing the market value of a stream of unitary wages to be received until retirement.
Ignoring the delay term in income dynamics, the cumulative discounted value of each unit of labor income to be received until retirement is given by
\vspace{-0.2truecm}
\begin{equation}
\label{g1}
g_1(t)=\mathbb I_{\{t<\tau_R\}} \int^{\{\tau_R\}}_t e^{-\beta (\tau-t)} \, {\rm d}\tau = \frac{1-e^{-\beta (\tau_R-t)^+}}{\beta},
\vspace{-0.2truecm}
\end{equation}
which is nothing else than the usual annuity term appearing in models without delay (e.g., \cite{DYBVIG_LIU_JET_2010}).
In our context, however, each unit of labor income received also affects the wages to be received  in the future. 
Such effect is precisely captured by the  second component $g_2$, as illustrated in the next simple example. \\
Assume that  $\phi(-1)>0$ and
is zero otherwise on $[-d,0]$, with ${ d>1}$, so that wage realizations influence labor income with a delay of one time unit.
Their  cumulative effect is then given  by
the following equation:
\vspace{-0.2truecm}
\begin{equation}
\label{g2}
g_2(t)=\int_t^{t\vee (\tau_R-1)}
e^{-\beta (\tau-t)} \left(e^{-(r+\delta)}\phi(-1) g(\tau+1)\right)\,{\rm d}\tau .
\end{equation}
The integral above provides the cumulative market value of the delayed contribution of each unit of income received before retirement. At each time $\tau\in  [t,\tau_R-1]$, any unit of income received is weighted by the term $\phi(-1)$ and received with certainty in case of survival with a delay of one unit of time, thus delivering a time-$(\tau+1)$ annuity value of $\phi(-1) g(\tau+1)$. The time-$\tau$ value {to the agent of such a} contribution is therefore $e^{-(r+\delta)}\phi(-1) g(\tau+1)$, and its time-$t$ value is then obtained by applying the standard discount factor. For each time $\tau\in (\tau_R-1,\tau_R]$, the delayed contribution has no time to materialize before retirement and hence  is not accounted for in $g_2$. 
\end{rem}

\vspace{-0.3truecm}


\section{Solving the optimization problem: preliminary work}
\label{SSE:PBREFORMOLATED}

\vspace{-0.3truecm}

In subsection~\ref{SSE:STATEMENT}
we rewrite the problem exposed in section~\ref{Problem formulation} using the results
introduced in section~\ref{sec3} and letting the
initial time $t$ vary.
Then, in subsection~\ref{SSE:STRATEGY} we sketch the idea of how we will solve the problem.
Finally, in section~\ref{SSE:EVOLUTION} we study
the time evolution of admissible paths

\vspace{-0.3truecm}

\subsection{Statement of Problem \ref{p0} rewritten}
\label{SSE:STATEMENT}

For $t\ge 0$ and $(w,x) \in \mathcal{H}:=\mathbb{R} \times M_2$,
we rewrite \eqref{DYNAMICS_WEALTH_LABOR_INCOME} with
initial time $t$ and unknown $(W,X)$ as:
\vspace{-0.4truecm}
\begin{eqnarray}
\label{DYN_W_X_INFINITE_RETIREMENT_II}
\begin{split}
\left\{\begin{array}{ll}
{\rm d}W(s)=\left[ (r+\delta) W(s)+ \theta^\top(s) (\mu-r \mathbf 1)
+ \right.\\
\left. (1-R(s)) X_0(s) - c(s) - \delta B(s) \right]\, {\rm d}s
+  \theta^\top (s) \sigma \, {\rm d}Z(s),\\
 {\rm  d}X(s) = A X(s)\, {\rm d}s + \big(C X(s) \big)_0^\top\,
 {\rm d}Z(s),\\
  W(t)= w, \qquad
X_0(t) = x_0,\quad  \\
X_1(t)(\zeta) = x_1(\zeta)
\mbox{ for $\zeta \in  [-d,0)$.}
 \end{array}\right. \end{split}
\end{eqnarray}
By Proposition \ref{equiv_stoch} the above admits a unique strong
solution $(W,X)$ corresponding to the unique solution
$(W,y)$ of \eqref{DYNAMICS_WEALTH_LABOR_INCOME} when the initial
time is $t$.
We denote such solution at time $s \ge 0$ by
$(W^{w,x}\left(s;t,c,B,\theta\right), X^{x}(s;t))$.
With this notation we emphasize the dependence of the solutions
on the present time $t$, the initial time $0$, the initial conditions
$(w_0,y)\in \mathcal{H}$
and the admissible controls $(c(\cdot),B(\cdot),\theta(\cdot))$. For readability, we will often denote a triplet
of controls $(c(\cdot),B(\cdot),\theta(\cdot))$ as $\pi(\cdot)$, as well as use the shorthand notation
$(W(s; \pi),X(s))$, in which dependence on initial time and conditions
is subsumed.
We take the following set of admissible controls
\vspace{-0.4truecm}
\begin{align}
\hspace*{-0.1cm}& \Pi\left(t,w,x\right)
:= \Big\{ \mathbb F\mbox{-predictable }
\pi(\cdot)=\left(c(\cdot), B(\cdot), \theta(\cdot)\right)
 \text{ in } \notag \\
& L^1_{loc} (\Omega \times [t, +\infty),
\mathbb R^2_{+}) \times L^2_{loc}(\Omega \times [t,+\infty) , \mathbb R^n) \mbox{ and such that }
\notag\\
& W^{w,x}\left(s;t,\pi\right) + g(s)X^x_0(s;t)+
\langle h(s), X^x_1(s;t) \rangle \geq 0\,
\ s \in [t, \infty)\Big\}.
  \label{DEF_PI_SECOND_DEFINITION}
\vspace{-0.4truecm}
\end{align}
\vspace{-0.3truecm}

Thanks to the results proved in Section \ref{sec3}, the above set
coincides with the one of \eqref{DEF_PI_FIRST_DEFINITION} for $t=0$.

\vspace{-0.3truecm}

The objective functional $J \left(t, w, x ; \pi\right)$ is (compare with the
one of \eqref{OBJECTIVE_FUNCTION})
\vspace{-0.4truecm}
\begin{equation}
\label{DEF_J_FIN_RET}
\mathbb E \left[\int_{t}^{+\infty} e^{-(\rho+ \delta) s }
\left( \frac{(K^{R(s)}c(s))^{1-\gamma}}{1-\gamma}\\
+ \delta \frac{\big(k B(s)\big)^{1-\gamma}}{1-\gamma}\right) {\rm d}s
\right].
\vspace{-0.1truecm}
\end{equation}
For simplicity, here we write $J$ as a function of $\pi$,
although it only depends on the control triplet through
$c$ and $B$,  and not through $\theta$.
Notice that the functional $J$ may take the value $- \infty$ when
$\gamma >1$ and both $c(\cdot)$ and $B(\cdot)$ are identically zero.
On the other hand, it cannot take the value $+ \infty$ when $\gamma \in (0,1)$,
when Hypothesis \ref{hp:betarho} holds.
Our target Problem~\eqref{p0} is then reformulated as follows:

\vspace{-0.3truecm}

\begin{prob}
\label{pb1}
For fixed initial state $(w,x) \in \mathcal{H}$
such that $w+ g(0)x_0+ \langle h(0, \cdot),x_1\rangle \ge 0$,
find $\overline\pi \in \Pi(0,w,x)$ such that
\vspace{-0.4truecm}
\begin{equation*}
J(0,w,x; \overline \pi)= \sup_{\pi \in \Pi(0,w,x)}J(0,w,x; \pi).
\end{equation*}
\end{prob}

\subsection{Solving Problem \ref{pb1}: solution strategy}
\label{SSE:STRATEGY}

To solve Problem \ref{pb1}, we use a two-stages optimal control technique
solving Problems \ref{pb2} and \ref{pb3} below, in that order.
Before stating them, let us fix some notation allowing us to rewrite
the total wealth of the agent and the constraint condition in terms
of the function $\Gamma$ defined as follows.
\begin{defn}
\label{notation}
We define the map $\Gamma:\R_+  \times \Hh\to\R$ as
\vspace{-0.3truecm}
\begin{equation}
\label{Gamma}
\Gamma(t,w,x):=w+g(t)x_0+ \langle h(t),x_1\rangle.
\vspace{-0.3truecm}
\end{equation}
It is also convenient to introduce the following subsets
 of $\R_+\times \mathcal{H}$: for $0 \le s< r \le \infty$: 
\vspace{-0.3truecm}
\begin{equation}
\label{H+}
\mathcal{H}_+^{s,r}:=\{(t,w,x) \in [s,r] \times \mathcal{H} \ : \ \Gamma(t,w,x )\ge 0\}
\vspace{-0.3truecm}
\end{equation}
and
\vspace{-0.3truecm}
\begin{equation*}
\mathcal{H}_{++}^{s,r}:=\{(t,w,x) \in
[s,r] \times \mathcal{H} \ : \ \Gamma(t,w,x )> 0\}.
\end{equation*}
In the case $0 \le s<  \infty$, by $\mathcal{H}_{+}^{s,\infty}$
we mean the subset $\{(t,w,x) \in [s,\infty)
\times \mathcal{H} \ : \ \Gamma(t,w,x )\ge 0\}$.
The case $\mathcal{H}_{++}^{s,\infty}$ is analogous.
\end{defn}

We now define the value function
$V:{\mathcal{H}^{0, \infty}_{+}}\rightarrow \overline{\mathbb{R}}$
as
\begin{footnote}{Note that at this point we allow $V$ to take the
values $\pm \infty$.}\end{footnote}
\vspace{-0.3truecm}
\begin{equation}
\label{V_t}
V(t,w,x):= \sup_{\pi \in \Pi(t,w,x)}J(t,w,x;\pi).
\vspace{-0.3truecm}
\end{equation}
We note that at time $\tau_R$ the dynamics of our problem changes.
In particular, when $t\ge \tau_R$, $(g(t),h(t))\equiv 0$, hence
$\Pi(t,w,x)$ does not depend on $x$:
in this case we will denote it simply by $\Pi(t,w)$.
If we assume that
the Dynamic Programming Principle (DPP) holds
(see e.g. \cite[Section 2.3]{FABBRI_GOZZI_SWIECH_BOOK})
we can rewrite the value function $V$ as follows.

\begin{itemize}
\item For any $t \in [\tau_R, \infty)$, $V(t,w,x) = V(t,w)=$ and the common value is
\vspace{-0.3truecm}
\begin{equation}
\label{split0}
\sup_{\pi \in \Pi(t,w)}
 \mathbb E \left[\int_t^{\infty} e^{-(\rho+ \delta) s }
\left( \frac{(Kc(s))^{1-\gamma}}{1-\gamma}
+  \delta \frac{\big(k B(s)\big)^{1-\gamma}}{1-\gamma}\right)
\, {\rm d}s \right]. \notag
\vspace{-0.3truecm}
\end{equation}
\item For any $t \in [0, \tau_R]$, $ V(t,w,x) $ equals:
\vspace{-0.3truecm}
 \begin{multline}
 \label{split}
 \sup_{\pi \in   \Pi(t,w,x)}  \mathbb E
  \left[\int_t^{\tau_R} e^{-(\rho+ \delta) s }
\left( \frac{c(s)^{1-\gamma}}{1-\gamma}\right.
+ \left.\delta \frac{\big(k B(s)\big)^{1-\gamma}}{1-\gamma}\right) \,{\rm d}s \right.
\\
\left.
+ V\big(\tau_R,W^{w,x}(\tau_R;t,\pi ),X^{x}(\tau_R;t)\big)
\right].
\vspace{-0.3truecm}
 \end{multline}
\end{itemize}

\vspace{-0.3truecm}

Note that we cannot say, at this stage, if DPP holds,
as we do not even know whether $V$ is finite.
However, we will overcome this difficulty by proceeding as follows.
\begin{itemize}
  \item{\bf Step (I)}
  First, we solve
  problem \eqref{split0}
  finding its value function $V^{ih}$ (which must be equal to $V$ by construction) and the optimal strategies
  when $t\ge \tau_R$. We call this ``Problem~3'' and solve it
  in next section (Section \ref{Vih_sec}).
\vspace{-0.3truecm}
  \item{\bf Step (II)}
  Second, we consider problem \eqref{split}
      with the final payoff given by the solution found at Step (I).
  We find again its value function $V^{fh}$ and
  its optimal strategies. We call this `Problem~4'' and solve it
  in Section \ref{Vfh_sec}.
\vspace{-0.3truecm}
  \item{\bf Step (III)}
   Observe that $V^{fh}=V$ and then conclude
  at the end of Section \ref{Vfh_sec}.
\end{itemize}

\vspace{-0.3truecm}

\begin{rem}
The strategy we follow to solve the optimization Problem \ref{pb1} relies
on finding an explicit solution of the HJB and prove
that this is equal to the value function.
We employ the two-stages optimal control technique and solve,
in this order, Problems \ref{pb2} and \ref{pb3}
since the HJB equations associated to these problems are smooth in time.
Smoothness in time is instead lost if we consider the HJB equation
associated to Problem \ref{pb1}. Intuitively this is evident
since in $\tau_R$ the dynamics of the state abruptly changes.
More formally, as one can see from \eqref{V_final},
the value function associated to Problem \ref{pb1}
will be a function of the total wealth of the agent.
In particular it will be a function of $g$ (see \eqref{DEF_g_h}).
Since, by Lemma \ref{LEMMA_EXISTENCE_g_h} the map $r \mapsto g'(r)$
has a discontinuity in $r=\tau_R$, it is clear that smoothness in time
will be lost by approaching directly Problem \ref{pb1} via a DP approach.
\end{rem}

\vspace{-0.3truecm}

\subsection{The evolution of admissible paths}
\label{SSE:EVOLUTION}

Fix the initial condition $(t,w,x) \in \mathcal{H}_+^{0, \infty}$ for system
\eqref{DYN_W_X_INFINITE_RETIREMENT_II} and $\pi \in \Pi(t,w,x)$.
Let $(W^{w,x}(\cdot;t,\pi), X^x(\cdot;t))$
be the corresponding solution.
In the following we will often use the shorthand notation
\vspace{-0.3truecm}
\begin{equation}
\label{Gamma_bar} \overline
\Gamma (s):=\Gamma(s,W^{w,x}(s;t,\pi), X^x(s;t)), \qquad s \ge t,
\vspace{-0.3truecm}
\end{equation}
and we will denote by
$\tau_t$ the first exit time of the process
$[t,+\infty)\times \Omega\to\R_+\times \mathcal{H}$,
$s \to (s,W^{w,x}(s;t,\pi), X^x(s;t))$ from
$\mathcal{H}_{++}^{0,\infty}$, that is
\vspace{-0.4truecm}
\begin{align}
\tau_t
 &:= \inf\left\{s\ge t:\; (s,W^{w,x}(s;t,\pi), X^x(s;t))
 \not\in \mathcal{H}_{++}^{0,\infty}\right\} \\
 &=\inf\{s\ge t :\;
(s,W^{w,x}(s;t,\pi), X^x(s;t)) \in \partial \mathcal{H}_{++}^{0, \infty}\}
\notag \\
&=\inf\{s\ge t:
\Gamma(s,W^{w,x}(s;t,\pi), X^x(s;t))=0\}.
\label{eq:deftaut}
\vspace{-0.4truecm}
\end{align}

\begin{lemma}\label{lm:Gammabar}
Fix the initial condition $(t,w,x) \in \mathcal{H}_+^{0, \infty}$
for system
\eqref{DYN_W_X_INFINITE_RETIREMENT_II} and $\pi \in \Pi_0$. Let
$(W^{w,x}(\cdot;t,\pi), X^x(\cdot;t))$ be the corresponding solution.
Then we have the following (where we write	$X_0(s)$ for $ X_0^x(s;t)$).
\begin{itemize}
\item[(i)]
The process $\overline\Gamma (s)$ in \eqref{Gamma_bar} satisfies,
for $s \ge t$, the SDE
\vspace{-0.3truecm}
\begin{align}
\label{eq:GammaProcessSDE} 	
&{\rm d}\overline\Gamma(s)=\left[(r+\delta)\overline\Gamma(s)-c(s)-\delta B(s)\right.
\notag \\
&
+ \left. \left(\theta^\top (s) \sigma +  g(s)  X_0(s)  \sigma_y^\top \right)
\kappa\right]{\rm d}s
\\ 	&
+ \left[\theta^\top(s) \sigma +  g(s)  X_0(s)  \sigma_y^\top
\right]\, {\rm d}Z(s).\notag
\end{align}
\vspace{-0.3truecm}
\item[(ii)]
\vspace{-0.4truecm}
Assume that
$\Gamma(t,w,x)=0$, i.e. that $\overline \Gamma(t)=0$.
Then for every $s\ge t$ it must be
$\overline \Gamma(s)=0$, $\P-a.s.$,
and
\vspace{-0.3truecm}
\begin{equation}
\label{eq:zerostrategy}
 c(s,\omega)=0, \quad B(s,\omega)=0,
\quad \theta^\top(s) \sigma +  g(s)X_0(s)  \sigma_y^\top =0,
\vspace{-0.3truecm}
\end{equation}
must hold $ds\otimes \P-a.e. \; in \; [t,\infty) \times \Omega.$
Let now
$\overline\Gamma(t)>0$.
Then, for every $s\ge t$,
$\mathbb I_{\{\tau_t<s\}}\overline \Gamma(s)=0$, $\P-a.s.$,
and
\vspace{-0.3truecm}
\begin{align*}
&\mathbb I_{\{\tau_t<s\}}(\omega) c(s,\omega)=0, \quad \mathbb
I_{\{\tau_t<s\}}(\omega)B(s,\omega)=0, \\
&\mathbb
I_{\{\tau_t<s\}}\left[\theta^\top(s) \sigma +  g(s)   X_0(s)
\sigma_y^\top\right] =0,
\vspace{-0.3truecm}
\end{align*}
$ds\otimes \P-a.e. \; in \; [t,\infty) \times
\Omega$.
\end{itemize}
\end{lemma}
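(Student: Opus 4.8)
The plan is to obtain (i) by It\^o's formula and then to deduce (ii) from the observation that, once $\overline\Gamma$ is multiplied by the pre-death state-price density $\xi$, it becomes a nonnegative local martingale (up to an additive nondecreasing term), which is forced to stay at its initial value whenever that value is $0$.

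\emph{Part (i).} Recall $\overline\Gamma(s)=W^{w,x}(s;t,\pi)+\langle(g(s),h(s)),X^x(s;t)\rangle_{M_2}$. When $t<\tau_R$, on $[t,\tau_R)$ the pair $(g,h)$ has the regularity granted by Lemma \ref{LEMMA_EXISTENCE_g_h}-(iii)--(iv), so the differential of $\langle(g(s),h(s)),X(s)\rangle_{M_2}$ is exactly the one already computed in the proof of Proposition \ref{PROPOSITION_EVALUATION_DELAYED_LABOR_INCOME}, see \eqref{cbis}; adding the first line of \eqref{DYN_W_X_INFINITE_RETIREMENT_II}, substituting $\mu-r\mathbf 1=\sigma\kappa$, and using the definition \eqref{eq:defbeta} of $\beta$ (so that $\beta+\mu_y=r+\delta+\sigma_y^\top\kappa$), all the terms outside \eqref{eq:GammaProcessSDE} cancel (here $R(s)=0$). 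For $s\ge\tau_R$ one has $(g(s),h(s))\equiv0$, so $\overline\Gamma(s)=W(s)$ and $R(s)=1$, and the first line of \eqref{DYN_W_X_INFINITE_RETIREMENT_II} is literally \eqref{eq:GammaProcessSDE} with the $g$-terms set to zero. Since $s\mapsto(g(s),h(s))$ and the trajectories of $W$ and $X$ are continuous, $\overline\Gamma$ is continuous at $\tau_R$, so the two pieces glue into \eqref{eq:GammaProcessSDE} on all of $[t,\infty)$.

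\emph{Part (ii), set-up.} Write $v(s)^\top:=\theta^\top(s)\sigma+g(s)X_0^x(s;t)\sigma_y^\top$, which belongs to $L^2_{loc}$ because $\theta\in L^2_{loc}$, $g$ is bounded and $X_0$ is $p$-integrable for every $p\ge1$. Applying the ordinary one-dimensional It\^o product rule to $\xi(s)\overline\Gamma(s)$, using \eqref{DYN_STATE_PRICE_DENSITY} and \eqref{eq:GammaProcessSDE}, the $(r+\delta)$-terms and the $v^\top\kappa$-term cancel against the covariation, leaving
\[
\mathrm d\big(\xi(s)\overline\Gamma(s)\big)=-\xi(s)\big(c(s)+\delta B(s)\big)\,\mathrm ds+\xi(s)\big(v(s)^\top-\overline\Gamma(s)\kappa^\top\big)\,\mathrm dZ(s).
\]
Hence $N(s):=\xi(s)\overline\Gamma(s)+\int_t^s\xi(u)\big(c(u)+\delta B(u)\big)\,\mathrm du$ is a continuous local martingale on $[t,\infty)$; since $\pi$ admissible forces $\overline\Gamma\ge0$, and $\xi>0$, $c,B\ge0$, the process $N$ is nonnegative, hence a supermartingale.

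\emph{Part (ii), conclusion.} If $\overline\Gamma(t)=0$ then $N(t)=0$, so from $0\le\mathbb E[N(s)]\le N(t)=0$ and $N(s)\ge0$ we get $N(s)=0$ $\P$-a.s.\ for each $s$, hence $N\equiv0$ $\P$-a.s.\ by path-continuity; as both summands of $N$ are nonnegative this gives $\xi(s)\overline\Gamma(s)=0$, i.e.\ $\overline\Gamma(s)=0$, and $\int_t^s\xi(u)(c(u)+\delta B(u))\,\mathrm du=0$ for all $s$, i.e.\ $c=B=0$ $\mathrm ds\otimes\P$-a.e.; plugging $\overline\Gamma\equiv0$ and $c\equiv B\equiv0$ into \eqref{eq:GammaProcessSDE} yields $\int_t^s|v(u)|^2\,\mathrm du=[\overline\Gamma]_s=0$, so $v\equiv0$ $\mathrm ds\otimes\P$-a.e., which is \eqref{eq:zerostrategy}. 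If instead $\overline\Gamma(t)>0$, then on $\{\tau_t\le s\}$ path-continuity (the process starts inside the open set $\mathcal H_{++}^{0,\infty}$) gives $\overline\Gamma(\tau_t)=0$; applying optional sampling to the nonnegative supermartingale $N$ at the bounded stopping time $\tau_t\wedge s\le s$ gives $\mathbb E[N(s)]\le\mathbb E[N(\tau_t\wedge s)]$, while $N(s)-N(\tau_t\wedge s)=\mathbb I_{\{\tau_t\le s\}}\big(\xi(s)\overline\Gamma(s)+\int_{\tau_t}^s\xi(u)(c(u)+\delta B(u))\,\mathrm du\big)\ge0$ has nonpositive expectation, hence vanishes $\P$-a.s.; reading off the two nonnegative summands and, exactly as above, using \eqref{eq:GammaProcessSDE} for the $\theta$-component, one obtains $\overline\Gamma(s)=0$, $c(s)=B(s)=0$ and $\theta^\top(s)\sigma+g(s)X_0(s)\sigma_y^\top=0$ on $\{\tau_t\le s\}$, which is the asserted statement on $\{\tau_t<s\}$.

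The main difficulty is organizational rather than conceptual: in Part (i) the It\^o formula has to be applied separately on $[t,\tau_R)$ and $[\tau_R,\infty)$ because $t\mapsto g'(t)$ jumps at $\tau_R$ (Lemma \ref{LEMMA_EXISTENCE_g_h}-(v)), and in Part (ii) one must check that $N$ is a genuine supermartingale (a local martingale bounded below) and that optional sampling at the random exit time $\tau_t$ is legitimate — for which the $p$-integrability of $\xi$ and $X$ together with the boundedness of $(g,h)$ from Lemma \ref{LEMMA_EXISTENCE_g_h} are precisely what is used.
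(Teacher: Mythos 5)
Your part (i) is exactly the paper's argument: split at $\tau_R$ because $g'$ jumps there (Lemma \ref{LEMMA_EXISTENCE_g_h}-(v)), reuse the differential \eqref{cbis} of $\langle(g(s),h(s)),X(s)\rangle_{M_2}$ from the proof of Proposition \ref{PROPOSITION_EVALUATION_DELAYED_LABOR_INCOME}, add the $W$-dynamics and simplify via $\mu-r\mathbf 1=\sigma\kappa$ and \eqref{eq:defbeta}. For part (ii) the paper gives no argument at all, simply declaring it ``completely analogous'' to \cite[Lemma 4.5(ii)]{BGP}; you supply the full proof, and your route — showing that $N(s)=\xi(s)\overline\Gamma(s)+\int_t^s\xi(u)\big(c(u)+\delta B(u)\big)\,{\rm d}u$ is a continuous nonnegative local martingale, hence a supermartingale frozen at $0$ once it starts (or arrives, via optional sampling at $\tau_t\wedge s$) at $0$, and then extracting $\theta^\top\sigma+g X_0\sigma_y^\top=0$ from the vanishing quadratic variation of $\overline\Gamma$ — is the standard state-price-density argument and is in the same spirit as the cited lemma; the computations (cancellation in the Itô product with \eqref{DYN_STATE_PRICE_DENSITY}, the Fatou/supermartingale step, the identification of $[\overline\Gamma]$) are all correct. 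One caveat you should make explicit: the nonnegativity of $N$ rests on the state constraint $\overline\Gamma\ge 0$, i.e.\ you tacitly take $\pi\in\Pi(t,w,x)$ rather than the bare $\pi\in\Pi_0$ written in the statement. This is in fact the only reading under which (ii) can hold (for a generic $\pi\in\Pi_0$ with $c=B=0$ and $\theta^\top\sigma+gX_0\sigma_y^\top\ne 0$, the process $\overline\Gamma$ started at $0$ does not stay at $0$), and it is how the lemma is invoked later in the paper, but since your proof uses it, state the hypothesis rather than leaving it implicit.
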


\vspace{-0.3truecm}

\begin{rem}
 We note  that the temporal dynamics of the total wealth process in \eqref{eq:GammaProcessSDE} and of admissible controls in \eqref{eq:zerostrategy} abruptly changes {at} $t= \tau_R$. In particular, for $t \ge\tau_R$, the time evolution of the total wealth process and of the admissible strategies  are as in the
classical Merton problem. For $t\in [0, \tau_R]$ we recover instead \cite[Lemma 4.9]{BGP}.
\end{rem}

\vspace{-0.4truecm}

\section{Solving the
infinite horizon Problem \ref{pb2}.}
\label{Vih_sec}

\vspace{-0.4truecm}

We start by Step (I) above dealing with the case
$t \in [\tau_R, \infty)$.
Note that, in this case, the dynamic of $W$ does not depend on $X$
and the functions $g$ and $h$ are null. Thus the constraint in
(\ref{CONSTRAINT_FIN_RET}) and consequently the set of admissible
controls do not depend on $X$: the value function depends just
on the state variable $W$. We then have to solve the following:
\begin{prob}
\label{pb2}
Let $t \in [\tau_R, +\infty)$ and $w \in \mathbb{R}_+$.
Consider the state equation
\vspace{-0.3truecm}
\begin{equation}
\label{DYNAMICS_WEALTH_INF_HOR}
\begin{cases}
{\rm d}W(s) =&  \left[ (r+\delta) W(s)+ \theta^\top(s) (\mu-r \mathbf 1)
  - c(s) - \delta B(s) \right]\, {\rm d}s +  \\
   & + \theta^\top (s)
   \sigma \, {\rm d}Z(s),\\
  W(t)\, =& w.
  \end{cases}
\vspace{-0.1truecm}
\end{equation}
Find the strategy $\overline \pi \in \Pi(t,w)$
that maximizes the objective functional
$J^{ih}(t,w;\pi):=$
\vspace{-0.3truecm}
\begin{eqnarray}\label{DEF_J_IH}
\mathbb E \left[
\int_{t}^{+\infty}  e^{-(\rho+ \delta) s }
 \left( \frac{(K c(s))^{1-\gamma}}{1-\gamma} +
  \delta \frac{\big(k B(s)\big)^{1-\gamma}}{1-\gamma}\right)
  \, {\rm d}s \right] \notag \\
\vspace{-0.6truecm}
\end{eqnarray}
recalling that, in this case, the constraint is simply
\vspace{-0.3truecm}
\begin{equation*}
W^{w}(s; t, \pi) \geq 0\quad \quad \mbox{for all}  \ s \geq t.
\vspace{-0.3truecm}
\end{equation*}
\end{prob}
The value function
$V^{ih}:[\tau_R, +\infty) \times {\mathbb{R}_+}
\rightarrow \overline{\mathbb{R}}$ is defined as
\begin{footnote}{Note that we allow at this point $V^{ih}$ to take
the values $\pm \infty$.}\end{footnote}
\vspace{-0.3truecm}
\begin{equation}
V^{ih}(t,w):= \sup_{\pi \in \Pi(t,w)}J^{ih}(t,w;\pi).
\vspace{-0.3truecm}
\end{equation}
Note that, by construction, it must be that
\vspace{-0.3truecm}
$$
V(t,w,x)=V^{ih}(t,w), \qquad \forall(t,w,x)\in
[\tau_R,+\infty)\times \mathcal{H}.
\vspace{-0.3truecm}
$$
This problem can be easily solved by appealing to
\cite[Theorem 5.1]{BGP}, as follows.

\begin{prop}
[Solution of Problem \ref{pb2}]
\label{PROPOSITION_MAIN_INF_HOR}
Consider the infinite-horizon optimization Problem \ref{pb2}
under Hypothesis \ref{hp:betarho}.
\vspace{-0.3truecm}
\begin{itemize}
\item The value function
$V^{ih}:[\tau_R,\infty) \times \mathbb{R}_+ \rightarrow
\overline{ \mathbb{R}}$ is given by
\vspace{-0.3truecm}
 \begin{equation}
 \label{Vfh}
 V^{ih}(t,w) =  e^{-(\rho + \delta )t} \hat \eta^{\gamma}
 \frac{  w^{1-\gamma} }
 {1-\gamma},
\vspace{-0.3truecm}
 \end{equation}
where
\vspace{-0.3truecm}
\begin{align}
\label{fbnu}
\hat \eta :=& \big( K^{-b}+\delta k^{-b}\big) \nu, \qquad
b= 1-\frac{1}{\gamma}, \\
\nu := &\frac{\gamma}{\rho + \delta -(1-\gamma)
 (r + \delta +\frac{\kappa^\top \kappa}{2\gamma })}. \notag
\vspace{-0.3truecm}
\end{align}
\end{itemize}
\begin{itemize}
\item The optimal strategies in feedback form are (here $s \ge t$)
\vspace{-0.3truecm}
\begin{align}\label{OPTIMAL_STRATEGIES_FIN_HOR}
& c^{*}(s):= K^{ -b }   \hat \eta^{-1}  W^*(s),
\quad
B^{*}(s):=k^{ -b } \hat \eta^{-1}  W^*(s), \notag
\\
& \theta^{*}(s):=   \frac{W^*(s) }{ \gamma}  (\sigma^\top)^{-1} \kappa  ,
\vspace{-0.3truecm}
\end{align}
where the optimal wealth process $W^*$ has dynamics (here $s\ge t$)
\vspace{-0.3truecm}
\begin{align}\label{DYN_GAMMA*_PB1}
\begin{split}
& {\rm d} W^*(s) = \notag
\\
& = W^*(s) \Big(  r + \delta +\frac{1}{\gamma}
\kappa^\top \kappa - \nu^{-1}\Big)\, {\rm d}s +  \frac{W^*(s)}{\gamma } \kappa^\top \, {\rm d}Z(s),\notag \\ & \text{ with } W(t)=w.
\end{split}
\vspace{-0.3truecm}
\end{align}
\end{itemize}
\end{prop}

\begin{rem}
\label{rm:newsec5}
Problem \ref{pb2} is a variant of the classical infinite horizon Merton problem taking into account the availability of life insurance. {Solution to Problem \ref{pb2} can also be found by  adapting classical arguments} (see e.g. \cite{Rogers}). Note, in particular, that one has to assume $\nu>0$, as required in Hypothesis \ref{hp:betarho}, in order for the Merton problem to have a solution.
If $\nu<0$, for example, one can prove that
an investor can achieve
infinite utility by delaying consumption when $\gamma \in(0,1)$
(see \cite[Section 1.6 and Proposition 1.3]{Rogers}
and \cite[Remark 2.5]{BGP}).
For the case $\gamma>1$, we refer to \cite{FreniGozziSalvadori06}, in which it is proved,
in a related deterministic case, that
$\nu<0$ implies $V^{ih}(t,w)= - \infty$.
\end{rem}

\vspace{-0.4truecm}

\section{Solving Problem \ref{pb3}: a finite horizon problem}
\label{Vfh_sec}

From the above Proposition \ref{PROPOSITION_MAIN_INF_HOR} we know that
$V(\tau_R,w,x)=V^{ih}(\tau_R,w)$ is given by \eqref{Vfh}.
Hence, we can now
maximize
the functional in \eqref{split} with $V(\tau_R,w,x)$ as terminal datum:
\begin{prob}
\label{pb3}
Let $t \in \left[0, \tau_R\right]$, and $(w,x)\in \mathcal{H}$
such that $\Gamma(t,w,x)\ge 0$.
Take the state equation
\vspace{-0.4truecm}
\begin{equation}
\label{pb3eq}
\begin{cases}
dW(s) =\\
 \left[W(s) r+ \theta^\top (s) (\mu-r\mathbf 1)
+ X_0(s) - c(s) + \delta \big(W(s)-B(s)\big) \right] {\rm d}s \\
+  \theta^\top(s) \sigma \, {\rm d}Z(s),
  \\
  dX(s) = A X(s) {\rm d}s + \big( C X(s)\big)_0^\top \, {\rm d}Z(s),
  \\
  W(t)= w,\quad X(t) = x.
 \end{cases}
 \vspace{-0.1truecm}
\end{equation}
Find the strategy $\overline \pi\in \Pi(t,w,x)$
that maximizes the objective functional
\vspace{-0.4truecm}
\begin{align*}
& J^{fh}(t,w,x;\pi)\!:= \\
& = \mathbb E \left[ \int_t^{\tau_R}
\!\! e^{-(\rho+ \delta) s }\! \left(\! \frac{( c(s))^{1-\gamma}}{1-\gamma}
 + \delta \frac{\big(k B(s)\big)^{1-\gamma}}{1-\gamma}\!\right)\! ds + \right.\\
+& \left. e^{-(\rho+\delta) \tau_R }
\frac{\hat \eta^{\gamma}
\big(W^{w,x}(\tau_R;t,\pi)\big)^{1-\gamma} }{1-\gamma } \right].
\end{align*}
Here, the constraint is:
\vspace{-0.3truecm}
\begin{equation*}
W^{w,x}\left(s;t, \overline \pi\right)+
g(s)X^x_0(s;t)+ \langle h(s), X^x_1(s;t) \rangle \geq 0
\,\quad \forall \ s \in [t, \tau_R].
\end{equation*}
\end{prob}
\begin{rem}
{We observe that the above problem does not change
if we truncate the admissible trajectories at time $\tau_R$, i.e.,
if we take as set of admissible controls the following one,
which is the standard one for our finite horizon problem:}
\vspace{-0.3truecm}
\begin{align}
\label{DEF_PI_SECOND_DEFINITION_bis2}
& \Pi^{\tau_R}\left(t,w,x\right)
:= \notag\\
& \Big\{ \mathbb F\mbox{-predictable } \pi(\cdot)
\in L^1 (\Omega \times [t, \tau_R],\mathbb R^2_{+})
\times L^2(\Omega \times [t,\tau_R] , \mathbb R^n);
\notag
\\
& \mbox{such that: }
  W^{w,x}\left(s;t, \pi\right) +
g(s)X^x_0(s;t)+ \langle h(s), X^x_1(s;t) \rangle \geq 0\,
 \notag \\
& \forall s \in [t, \tau_R]\Big\}.
\vspace{-0.3truecm}
\end{align}
{We will keep the less standard choice of the set of admissible control
because it allows us to simplify the notation.}
\end{rem}
We define the value function
$V^{fh}:{\mathcal{H}^{0, \tau_R}_{++}}\rightarrow {\mathbb{R}}$
as
\begin{eqnarray}
\label{VF}
 V^{fh}(t,w,x) :=\sup_{\pi \in   \Pi(t,w,x)}   J^{fh}(t,w,x;\pi).
\vspace{-0.4truecm}
\end{eqnarray}
To write the associated HJB equation we
rewrite system \eqref{pb3eq} in the unknowns $(W,X)$ as an equation in a single unknown $\mathcal{X}$.
We introduce two linear operators on $\mathcal{H} \times \mathbb{R}_+ \times \mathbb{R}_+ \times \mathbb{R}^n$: the unbounded operator $\mathcal{B}$ with values in $\mathcal{H}$ given by
\vspace{-0.4truecm}
\begin{equation*}
D(\mathcal{B})= \mathbb{R} \times D(A) \times \mathbb{R}_+ \times \mathbb{R}_+ \times \mathbb{R}^n,
\vspace{-0.4truecm}
\end{equation*}
\begin{equation*}
\mathcal{B}(w,x,c,B, \theta):=\left((r+\delta)w+\theta^\top(\mu-r\mathbf 1) +x_0-c-\delta B,Ax\right),
\end{equation*}
and the bounded operator $\mathcal{S}$ with values in
$L(\mathbb{R}^n;\mathcal{H})$ given by
\vspace{-0.3truecm}
\begin{equation*}
\mathcal{S}(w,x,c,B, \theta):= \left[z \rightarrow \left(\theta^\top \sigma z, (Cx)_0^\top z \right)\right].
\vspace{-0.3truecm}
\end{equation*}
It is not difficult to check that, for every fixed $(w,x,c,B, \theta)\in \mathcal{H} \times \mathbb{R}_+ \times \mathbb{R}_+ \times \mathbb{R}^n$, the adjoint of $\mathcal{S}(w,x,c,B, \theta)$ is the map $\mathcal{S}(w,x,c,B, \theta)^* \in L(\mathcal{H}; \mathbb{R}^n)$,
given by:
\vspace{-0.3truecm}
\begin{equation*} (u,p) \mapsto u\theta^\top\sigma + x_0p_0\sigma_y.
\vspace{-0.3truecm}
\end{equation*}
Let $\mathcal{I}$ denote the following space:
\vspace{-0.3truecm}
\begin{align*}
L^2(-d,0;\mathbb{R}) \times \left (L^2(-d,0;\mathbb{R}) \times L(L^2(-d,0;\mathbb{R}) ; L^2(-d,0;\mathbb{R}) )\right).
\vspace{-0.3truecm}
\end{align*}
Therefore $L(\mathcal{H},\mathcal{H}) \cong\mathcal{N}:=
\mathcal{H} \times \mathcal{H} \times \mathcal{I}$,
and given an element $P \in \mathcal{N}$,
we can index its entries as $P_{ij}$, where $i$ denotes the spaces
$\mathcal{H},\mathcal{H},\mathcal{I}$ (in this order) and $j$
the components in each space
(hence $(P_{11},P_{12},P_{13}) \in \mathcal{H}$).
Through this interpretation we can define the space of symmetric elements in $\mathcal{N}$ as
$\mathcal{N}_{sym}:=\{P\in \mathcal{N}:P_{ij}=P_{ji}, \ i,j=1,...3\}$. 
By simple computations we then have, for any $P \in \mathcal{N}_{sym}$
 and $(w,x,c,B,\theta) \in \mathcal{H} \times \mathbb{R}_+ \times
 \mathbb{R}_+ \times \mathbb{R}^n$,
\vspace{-0.3truecm}
\begin{align}
\label{trace_term}
&\text{Tr}\left[P\mathcal{S}(w,x,c,B, \theta)\mathcal{S}(w,x,c,B,\theta)^* \right] =\\
& =  \theta^\top \sigma \sigma^\top\theta P_{11}+2\theta^\top\sigma \sigma_yx_0P_{12}+\sigma_y^\top\sigma_yx_0^2P_{22}. \notag
\vspace{-0.3truecm}
\end{align}
Hence, for any given function $f : [0, \tau_R] \times \mathcal{H} \rightarrow  \mathbb{R}$, its second Fr\'echet derivative, w.r.t. the variable in $\mathcal{H}$, at a given point $(t, w,x)$  is an element $\nabla^2f(t,w,x)\in \mathcal{N}_{sym}$. Formula \eqref{trace_term} then provides then the second order term that will appear in our HJB equation. Notice that, since the $L^2$-component of $\mathcal{S}$ is zero, such second order term turns out to be finite dimensional: it depends only on $P_{11}$, $P_{12}$ and $P_{22}$, that is only on the derivative w.r.t. the real components $(w, x_0)$. This is reasonable since the noise we consider affects explicitly only the dynamics of $W$ and $X_0$, but not that of $X_1$.
Let $t\in[0,\tau_R]$ and
$\pi(\cdot)=(c(\cdot), B(\cdot), \theta(\cdot))\in \Pi(t,w,x)$.
The state equation \eqref{pb3eq} is then rewritten as
\vspace{-0.3truecm}
\begin{equation}
\label{system_math}
\begin{cases}
{\rm d} \mathcal{X}(s)=\mathcal{B}(\mathcal{X}(s), \pi(s))\, {\rm d}s+ \mathcal{S}(\mathcal{X}(s),\pi(s))\, {\rm d}Z(s),\\
 \mathcal{X}(t)=(w,x).
\end{cases}
\end{equation}
By \cite[Chapter 5.6]{KARATZSAS_SHREVE_91} and Proposition
\ref{equiv_stoch} there is a unique mild solution to system
\eqref{system_math}. We will denote the solution at time
$t\in [0, \tau_R]$ by
$\mathcal{X}^{w,x}(s;t,\pi)=(W^{w,x}(s;t, \pi), X^x(s;t))$.
As usual, dependence on initial conditions will be at times subsumed in the following.

\subsection{The HJB equation and its explicit solution}

\vspace{-0.3truecm}

The Hamiltonian of our Problem \ref{pb3} is
\vspace{-0.2truecm}
\begin{equation*}
\tilde{\mathbb{H}}^{fh}: [0, \tau_R] \times (\mathbb{R} \times D(A)) \times \mathcal{H} \times \mathcal{N}_{sym} \rightarrow \mathbb{R} \cup \{\pm\infty\},
\vspace{-0.3truecm}
\end{equation*}
\vspace{-0.3truecm}
\begin{align*}
& \tilde{\mathbb{H}}^{fh}(t,(w,x),p,P)= \sup_{\pi \in \mathbb{R}^2_+
\times \mathbb{R}^n} \Big[\langle\mathcal{B}(w,x,\pi),p\rangle_{\mathcal{H}}\\
& + \frac 12 \text{Tr}\left[P\mathcal{S}(w,x,c,B, \theta)\mathcal{S}(w,x,c,B,\theta)^* \right] + l(t,\pi)\Big]
\vspace{-0.3truecm}
\end{align*}
where
\vspace{-0.3truecm}
\begin{equation*}
l(t,\pi):= e^{-(\rho+ \delta) t } \left( \frac{c^{1-\gamma}}{1-\gamma}
+ \delta \frac{k^{1-\gamma} B^{1-\gamma}}{1-\gamma}\right).
\end{equation*}
It is however convenient (see Remark \ref{rem:H_tilde} below) to rewrite the Hamiltonian in a more explicit way using the definitions of $\mathcal{B}$ and $\mathcal{S}$ and exploiting the definition of
$A^*$. Separating the part that depends on the controls from the rest, the Hamiltonian for Problem \ref{pb3} is the function:
\vspace{-0.3truecm}
\begin{equation*}
\mathbb{H}^{fh}: [0, \tau_R] \times \mathcal{H} \times (\mathbb{R} \times D(A^*))\times \mathcal{N}_{sym} \rightarrow \mathbb{R} \cup \{ \pm \infty\},
\vspace{-0.3truecm}
\end{equation*}
decomposed as:
\vspace{-0.3truecm}
\begin{align}\label{DEF_HAMILTONIAN_FIN_HOR}
&\mathbb H^{fh}(t,(w,x),p,P) :=
 \mathbb H^{fh}_0((w,x),p,P_{22}) +\notag \\
 &+\mathbb H^{fh}_{\max}(t,x_0,p_0,P_{11},P_{12})  ,
\end{align}
where:
\vspace{-0.3truecm}
\begin{align}\label{DEF_HAM_1 FIN_HOR}
& \mathbb H^{fh}_0((w,x),p,P_{22}):=
(r+\delta) w p_0 + x_0 p_0+  \langle x,A^*p_1  \rangle_{M_2}+ \notag \\
 & +\frac{1}{2}  \sigma_y^\top \sigma_y x_0^2 P_{22},
\vspace{-0.3truecm}
\end{align}
\vspace{-0.3truecm}
\begin{align}\label{DEF_HAM_max FIN_HOR}
\mathbb H^{fh}_{\max}(t,x_0,p_0,P_{11},P_{12})
:= \sup_{(c,B,\theta)\in \R^2_+\times  \mathbb R^n} \mathbb H^{fh}_{cv}
(t,x_0,p_0,P_{11},P_{12}; \pi),
\vspace{-0.3truecm}
\end{align}
\vspace{-0.3truecm}
\begin{align*}
&\mathbb H^{fh}_{cv}
(t,x_0,p_0,P_{11},P_{12};\pi)
:=  l(t,\pi)
 +[\theta^\top(\mu-r\mathbf 1) - c-\delta B]p_0
+ \\
&\frac{1}{2} \theta^\top  \sigma  \sigma^\top   \theta P_{11}
+  \theta^\top  \sigma \sigma_y x_0   P_{12}.
\vspace{-0.3truecm}
\end{align*}
Reordering the terms in the above definition of $\H^{fh}_{cv}$
we can write
\vspace{-0.3truecm}
\begin{align}
\label{DEF_H_CV_FIN_HOR}
& \H^{fh}_{cv}(t,x_0,p_0,P_{11},P_{12},\pi)=
 e^{-(\rho+ \delta) t }
\frac{c^{1-\gamma}}{1-\gamma}-cp_0  \notag \\
& +\delta\left[e^{-(\rho+ \delta) t }
 \frac{k^{1-\gamma} B^{1-\gamma}}{1-\gamma}-Bp_0\right]
 \notag\\
 &+
 \left[\frac12\left\vert\theta^\top\sigma\right\vert^2P_{11}
 +\theta^\top\sigma\sigma_y x_0P_{12}+\theta^\top(\mu-r\mathbf{1})p_0\right],
\vspace{-0.3truecm}
\end{align}
from which we easily see that for each $x_0\in\R$ and $P_{12}\in\R$
there are three possible cases:
\begin{enumerate}
\item\label{case1}
if $p_0>0$ and $P_{11}<0$ the sup in (\ref{DEF_HAM_max FIN_HOR})
is achieved at $\left(\theta^\ast,c^\ast,B^\ast\right)$, where
(here $b=1-\gamma^{-1}$),
\vspace{-0.3truecm}
\begin{align}
\label{MAXS_HAMILTONIAN_FIN_HOR}
& \theta^*=-\frac{(\sigma\sigma^\top)^{-1}}{P_{11}}
\left[(\mu-r\mathbf{1})p_0 +\sigma\sigma_yx_0P_{12}\right],
\; \\
& c^*=e^{-\frac{(\rho + \delta)t}{\gamma}}p_0^{-\frac{1}{\gamma}},
\; \notag
\quad
B^*=k^{-b}e^{-\frac{(\rho +\delta)t}{\gamma}}p_0^{-\frac{1}{\gamma}}\notag
\vspace{-0.3truecm}
\end{align}
\item if $p_0<0$ or $P_{11}>0$ then the supremum in (\ref{DEF_HAM_max FIN_HOR}) is $+\infty$;
\item
if $p_0P_{11}=0$ the supremum in (\ref{DEF_HAM_max FIN_HOR}) can be finite or infinite depending on $\gamma$ and on the sign of the other terms involved.
\vspace{-0.3truecm}
\end{enumerate}
The HJB equation associated with Problem \ref{pb3} is the following PDE in the unknown $v: [0,\tau_R] \times \mathcal{H}\to\R$, with terminal condition
(here $\hat \eta$ as in \eqref{fbnu}):
\vspace{-0.3truecm}
\begin{eqnarray}
\label{HJB.}
\left\{
\begin{array}{l}
-  \partial_tv(t,w,x) = \mathbb H^{fh}\left(t,(w,x),\nabla v(t,w,x),\nabla^2v(t,w,x)\right)
\\
v(\tau_R,w,x) = e^{-(\rho+\delta)\tau_R}\hat \eta^{\gamma}\frac{ w^{1-\gamma}}{1-\gamma}.
\end{array}
\right.
\vspace{-0.3truecm}
\end{eqnarray}

\begin{defn}\label{DEF_SUPERSOLUTION_INF_RET}
A function $ v :  \mathcal{H}_{++}^{0,\tau_R} \longrightarrow \mathbb R $
is a \emph{classical solution}
of the HJB equation (\ref{HJB.}) if:
  \begin{enumerate}
\item
$v$ is continuously Fr\'{e}chet differentiable
and its second Fr\'echet derivatives with respect to the couple
$(w,x_0)$ exist and are continuous in $\H_{++}^{0,\tau_R}$.
\item \label{regularity}
$\partial_xv(t,w,x)\in D(A^*)$ for all
$(t,w,x)\in\mathcal{H}_{++}^{0, \tau_R}$
and $A^*\partial_xv(t,w,x)$ is continuous in $\mathcal{H}_{++}^{0, \tau_R}$.
\item
\vspace{-0.2truecm}
$v$ satisfies \eqref{HJB.} for every $(t,w,x)
\in  \mathcal{H}_{++}^{0, \tau_R}$.
\end{enumerate}
\end{defn}
\begin{rem}
\label{rem:H_tilde}
The difference between $\tilde{\mathbb{H}}^{fh}$ and $\mathbb{H}^{fh}$ lies in the term involving $A$, that appears as $\langle Ax,p_1\rangle$ in the former but as $\langle x,A^*p_1\rangle$ in the latter. This choice makes $\mathbb{H}^{fh}$ defined on the whole $\mathcal{H}_{++}^{0, \tau_R}$ instead than only on $\mathcal{H}_{++}^{0, \tau_R}\cap(\R\times\mathcal{D}(A))$, at the price of requiring further regularity of the solution, as specified in Definition \ref{DEF_SUPERSOLUTION_INF_RET}-~\ref{regularity}.
  In next Proposition \ref{PROPOSITION_GUESS_SOLVES_HJB_FIN_HOR}
  we provide an explicit solution that satisfies the required properties.
\end{rem}
If a solution $v$ to (\ref{HJB.}) satisfies $\partial_w v>0$ and $\partial^2_{ww}v<0$ uniformly in $(t,w,x)$, then we fall in case \ref{case1} above and, plugging $\theta^\ast,c^\ast,B^\ast$ in the definition of $\H^{fh}$, we find the PDE for $v$ to take the form
\vspace{-0.3truecm}
\begin{align}
\label{new_HJB}
-\partial_tv
= &
[(r+\delta) w + x_0]  \partial_w v
\notag
+   \langle x, A^*\partial_x v  \rangle_{M_2}
+ \\
& \frac{1}{2} \sigma_y^\top \sigma_y x_0^2 \partial^2_{x_0 x_0}v
 + \frac{\gamma}{1-\gamma} e^{-\frac{\rho+\delta}{\gamma}t}
  \big(1+\delta k^{-b}  \big)(\partial_w v)^{b}
\notag\\  \notag
&-\frac{1}{2} \frac{1}{\partial^2_{ww}v}
\left((\mu-r \mathbf 1)  \partial_wv
 +\sigma \sigma_y x_0
 \partial^2_{w x_0}v\right)^\top* \notag\\ &
*(\sigma\sigma^\top)^{-1}
\left((\mu-r \mathbf 1) \partial_wv+ \sigma \sigma_y
 \partial^2_{w x_0}v\right),
\vspace{-0.3truecm}
\end{align}
with the same terminal condition of (\ref{HJB.}).
We provide an explicit solution to \eqref{HJB.}.
\vspace{-0.3truecm}
\begin{prop}\label{PROPOSITION_GUESS_SOLVES_HJB_FIN_HOR}
Let Hypothesis \ref{hp:betarho} hold true. Let for $(t,w,x) \in \mathcal{H}_{++}^{0, \tau_R}$, \vspace{-0.4truecm}
\begin{equation} \label{EQ_GUESS_V}
\overline v(t,w,x):= F(t)^{\gamma} \frac{\Gamma^{1-\gamma}(t,w,x)}{1-\gamma}.
\vspace{-0.3truecm}
\end{equation}
The function $\overline v$ is a classical solution of the HJB equation \eqref{HJB.}, where
\vspace{-0.4truecm}
\begin{align}
\label{F.}
& F(t) := e^{-\frac{ (\rho + \delta)t}{\gamma}} f(t),\,
f(t):=  (\hat \eta - \eta)
\exp\left(-\frac{\tau_R-t}{\nu} \right) + \eta, \notag \\
&  \eta:=(1+ \delta k^{-b}) \nu,
\vspace{-0.3truecm}
\end{align}
$\hat \eta, \nu,b$ are given in \eqref{fbnu}, and
$\Gamma:=\Gamma(t,w,x)$ given in \eqref{Gamma}.
\end{prop}
\vspace{-0.3cm}

\begin{rem}\label{rem_boundary}
The function $\overline v$ can be defined also in ${\mathcal{H}}_{+}^{0, \tau_R}$
by setting,
on $\partial{\mathcal{H}}_{+}^{0, \tau_R}=\{\Gamma(t,w,x)=0\}$,
$\overline v(t,w,x)=0, \quad \hbox{when $\gamma\in (0,1)$},
\text{ and }
\overline v(t,w,x)=-\infty, \,\hbox{when $\gamma\in (1,+\infty)$}
$.
From now on we will consider $\overline v$ defined on
$\mathcal{H}_{+}^{0, \tau_R}$ with the above values at the boundary.
\end{rem}
\vspace{-0.3truecm}
Next, there is
the key step to get the optimal feedbacks.

\vspace{-0.3truecm}

\subsection{The fundamental identity}

We start with the following lemma.

\vspace{-0.3truecm}

\begin{lemma}
\label{LEMMA_LIMIT_AT_INFTY_GUESS_VALUE_FUNCTION_INFINITE_RETIREMENT}
Let $\overline v: \mathcal{H}_{+}^{0, \tau_R} \rightarrow
\overline{\mathbb{R}}$ be
the classical solution to the HJB \eqref{HJB.} given in Proposition
\ref{PROPOSITION_GUESS_SOLVES_HJB_FIN_HOR}.
Take any initial condition $(t,w,x)\in \mathcal{H}^{0, \tau_R}_{+}$
and take $\pi \in \Pi(t,w,x)$.
Then:
\vspace{-0.4truecm}
\begin{equation*}
\mathbb{E} \left[ \sup_{s \in [t,\tau_R]}\overline v
\left(s, (W^{w,x}(s;t, \pi), X^x(s;t))\right)\right] < \infty.
\vspace{-0.3truecm}
\end{equation*}
\end{lemma}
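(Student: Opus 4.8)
The plan is to split according to the sign of $1-\gamma$, the case $\gamma\in(1,+\infty)$ being essentially free. Indeed, for $\gamma>1$ we have $F(t)^\gamma>0$, $\Gamma\ge 0$ on $\mathcal H_{+}^{0,\tau_R}$, and $1-\gamma<0$, so $\overline v\le 0$ everywhere (with value $-\infty$ on $\{\Gamma=0\}$, by Remark \ref{rem_boundary}); hence $\sup_{s\in[t,\tau_R]}\overline v(s,W^{w,x}(s;t,\pi),X^x(s;t))\le 0$ pathwise and the expectation is trivially $<\infty$.

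So assume $\gamma\in(0,1)$. First I would record that the function $f$ in \eqref{F.} is continuous and strictly positive on $[0,\tau_R]$, since it interpolates between the positive constants $\hat\eta$ and $\eta$ (both positive because $\nu>0$ under Hypothesis \ref{hp:betarho}); hence $F$ is continuous and positive there, and $C_F:=\sup_{t\in[0,\tau_R]}F(t)^\gamma<\infty$. Writing $\overline\Gamma(s):=\Gamma(s,W^{w,x}(s;t,\pi),X^x(s;t))$ as in \eqref{Gamma_bar}, admissibility of $\pi$ gives $\overline\Gamma\ge 0$ on $[t,\tau_R]$, so, using the elementary inequality $y^{1-\gamma}\le 1+y$ for $y\ge 0$,
\[
\overline v\big(s,W^{w,x}(s;t,\pi),X^x(s;t)\big)=F(s)^\gamma\,\frac{\overline\Gamma(s)^{1-\gamma}}{1-\gamma}\le\frac{C_F}{1-\gamma}\,\big(1+\overline\Gamma(s)\big),
\]
and it therefore suffices to prove that $\mathbb E\big[\sup_{s\in[t,\tau_R]}\overline\Gamma(s)\big]<\infty$.

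To this end I would start from the SDE \eqref{eq:GammaProcessSDE} for $\overline\Gamma$ provided by Lemma \ref{lm:Gammabar}(i), and set $\Phi(s):=\theta^\top(s)\sigma+g(s)X_0^x(s;t)\sigma_y^\top$. Since $\theta\in L^2_{loc}$, $g$ is bounded (Lemma \ref{LEMMA_EXISTENCE_g_h}), and $X$ has finite second moment on $[t,\tau_R]$ (e.g. \cite[Theorem 1.130]{FABBRI_GOZZI_SWIECH_BOOK}, as already used in the proof of Proposition \ref{PROPOSITION_EVALUATION_DELAYED_LABOR_INCOME}), we get $\Phi\in L^2(\Omega\times[t,\tau_R];\mathbb R^{1\times n})$; in particular $M(s):=\int_t^s\Phi(u)\,{\rm d}Z(u)$ is a square integrable martingale with $\mathbb E[\sup_{v\in[t,\tau_R]}|M(v)|]<\infty$ by Doob's inequality, and $\mathbb E\int_t^{\tau_R}|\Phi(u)|\,|\kappa|\,{\rm d}u<\infty$ by Cauchy--Schwarz. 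Dropping the nonpositive terms $-c-\delta B$ in the integral form of \eqref{eq:GammaProcessSDE} and taking running suprema one gets, for $t\le s\le\tau_R$, with $G(s):=\sup_{u\in[t,s]}\overline\Gamma(u)$,
\[
G(s)\le\overline\Gamma(t)+(r+\delta)\int_t^s G(u)\,{\rm d}u+\int_t^{\tau_R}|\Phi(u)|\,|\kappa|\,{\rm d}u+\sup_{v\in[t,\tau_R]}M(v).
\]

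The one non-routine point is that Grönwall's lemma cannot be invoked directly, since the finiteness of $\mathbb E[G(s)]$ is precisely what we are trying to establish; so I would localize with $\tau_n:=\inf\{u\ge t:\overline\Gamma(u)\ge n\}\wedge\tau_R$ for $n\ge\Gamma(t,w,x)$, for which $\psi_n(s):=\mathbb E[\sup_{u\in[t,s\wedge\tau_n]}\overline\Gamma(u)]\le n<\infty$. Taking expectations in the displayed estimate stopped at $\tau_n$ yields $\psi_n(s)\le C_0+(r+\delta)\int_t^s\psi_n(u)\,{\rm d}u$ with a finite constant $C_0$ independent of $n$, so Grönwall gives $\psi_n(s)\le C_0 e^{(r+\delta)(\tau_R-t)}$ for all $n$ and all $s\in[t,\tau_R]$. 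Since $\overline\Gamma$ has continuous paths on the compact interval $[t,\tau_R]$, one has $\tau_n=\tau_R$ for $n$ large (pathwise), hence $\sup_{u\in[t,\tau_R\wedge\tau_n]}\overline\Gamma(u)\uparrow\sup_{u\in[t,\tau_R]}\overline\Gamma(u)$, and monotone convergence gives $\mathbb E[\sup_{u\in[t,\tau_R]}\overline\Gamma(u)]\le C_0 e^{(r+\delta)(\tau_R-t)}<\infty$, which completes the argument. The main obstacle, such as it is, lies in this ordering of the localization and Grönwall steps, together with the verification that the diffusion coefficient $\Phi$ of $\overline\Gamma$ is genuinely square integrable, which rests on the $L^2$ moment bound for the infinite-dimensional mild solution $X$ and on the boundedness of $g$.
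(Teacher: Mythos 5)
Your proposal is correct and follows essentially the same route as the paper: the case $\gamma>1$ is dismissed since $\overline v\le 0$, and for $\gamma\in(0,1)$ one factors out $\max_{[t,\tau_R]}F^\gamma$ and reduces the claim to $\mathbb E\big[\sup_{s\in[t,\tau_R]}\overline\Gamma(s)\big]<\infty$ using the SDE for $\overline\Gamma$ from Lemma \ref{lm:Gammabar}. The only difference is that the paper obtains this moment bound by invoking Theorem 1.130 of \cite{FABBRI_GOZZI_SWIECH_BOOK}, whereas you prove it directly (square integrability of the diffusion coefficient, Doob, localization, Gr\"onwall and monotone convergence), which is a valid, self-contained substitute for that citation.
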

\begin{prop}
\label{pr:fundid}
Let $\overline v: \mathcal{H}_{+}^{0, \tau_R}
\rightarrow \overline{\mathbb{R}}$
be the classical solution to the HJB \eqref{HJB.} given in Proposition
\ref{PROPOSITION_GUESS_SOLVES_HJB_FIN_HOR}.
Let $(t,w,x) \in \mathcal{H}_{++}^{0, \tau_R}$ and take any $\pi \in \Pi(t,w,x)$ such that, when $\gamma>1$,
$J^{fh}(t,w,x;\pi)>-\infty$.
Let $\overline \tau_t:=\tau_t \wedge \tau_R$ (where
$\tau_t$ is defined in \eqref{eq:deftaut}).
Let  $\mathcal{X}(\cdot;t, \pi) =(W(\cdot;t,\pi), X(\cdot;t))$ be the trajectory corresponding to initial state $(t,w,x)$ and strategy $\pi$. The following identity holds:
\vspace{-0.3truecm}
\begin{align}\label{EXP_VER_INF_RET_III}
&\overline v\big(t,w,x)=J^{fh}(t,w,x;\pi) \\
& +\mathbb E  \int_t^{\overline \tau_t} \left[
\mathbb{H}^{fh}_{\max}\big(s,X_0(s),
\partial_w\overline v,
\partial_{ww} \overline v,
\partial_{wx_0}\overline v\big)\right. \notag
\\
&\left.
-\mathbb H^{fh}_{cv}\big(s,X_0(s),
\partial_w\overline v,
\partial_{ww} \overline v,
\partial_{wx_0}\overline v;
\pi(s) \big) \right]\,{\rm d}s. \notag
\vspace{-0.3truecm}
\end{align}
where we set
\vspace{-0.2truecm}
$$
\partial_w\overline v:=\partial_w\overline v(s,\mathcal{X}(s; t,\pi)),
\quad
\partial_{ww} \overline v:=\partial_{ww} \overline v(s,\mathcal{X}(s;t, \pi)),
\vspace{-0.2truecm}
$$
\vspace{-0.2truecm}
$$
\partial_{wx_0}\overline v:=\partial_{wx_0}\overline v(s,\mathcal{X}(s;t, \pi))
$$
\end{prop}

\vspace{-0.3truecm}

As a consequence of Proposition \ref{pr:fundid} we get the following.
\vspace{-0.3truecm}
\begin{cor}
\label{cr:FINITENESS_VALUE_FUNCTION}
The value function $V^{fh}$ given in \eqref{VF} is finite on ${\mathcal{H}}_{++}^{0, \tau_R}$ and $V^{fh}(t,w,x)\le \overline v(t,w,x)$ for every $(t,w,x)\in {\mathcal{H}}_{++}^{0, \tau_R}$.
\end{cor}

\vspace{-0.3truecm}

\subsection{Verification Theorem and optimal feedbacks}

Here we show that
$\overline v=V^{fh}$ in $\mathcal{H}_{+}^{0, \tau_R}$
also finding the optimal feedback strategies.
First, we provide the following definitions.

\vspace{-0.3truecm}

\begin{defn}\label{DEF_ADMISSIBLE_FEEDBACK STRATEGY_INF_RET}
Fix $(t,w,x) \in \mathcal{H}_{+}^{0, \tau_R}$.
A strategy $\overline \pi:=
\left(\overline c,\overline B, \overline \theta \right)$
is called an \textit{optimal strategy} if
$\left(\overline c,\overline B, \overline \theta \right)
\in \Pi\left(t,w,x\right)$,
and $V^{fh}(t,w,x)=J(t,w,x; \overline \pi)$.
\end{defn}

\vspace{-0.3truecm}

Note that if $(t,w,x)\in\partial \mathcal{H}_{+}^{0, \tau_R}$,
By Lemma \ref{lm:Gammabar} there is only one admissible strategy
which is also the unique optimal one.

\vspace{-0.3truecm}

\begin{defn}
We say that a function
$\left( \textbf{C}, \textbf{B}, \Theta\right):
\mathcal{H}_{+}^{0,\tau_R}\longrightarrow
\mathbb R_{+}\times \mathbb R_{+}\times \mathbb R^{n}$
is an \textit{optimal feedback map} if for any initial datum $(t,w,x)\in \mathcal{H}^{0, \tau_R}_{+}$, the closed loop equation
\vspace{-0.3truecm}
\begin{equation}
\label{eq:newCL}
\begin{cases}
{\rm d}W(s) =  \left[ (r+\delta) W(s)+\Theta^\top\left(s,W(s), X(s)\right) (\mu-r)  +  \right.
\\ \left.
\, + X_0(s) - \textbf{C}\left(s,W(s), X(s)\right) +
\left. - \delta\textbf{B}\left(s,W(s), X(s)\right)\right] \, {\rm d}s
  + \right.\\
  +  \Theta^\top \left(s,W(s), X(s)\right) \sigma \,{\rm d}Z(s),
  \\
  {\rm d}X(s)=AX(s)+(CX(s))^\top \, {\rm d}Z(s)
  \\
   (W(t),X(t))= (w,x).
 \end{cases}
\vspace{-0.1truecm}
 \end{equation}
has a unique solution $(W^*,X):= \mathcal{X}^*$
and the associated control strategy
$\left(\overline c, \overline B, \overline \theta \right)$
\vspace{-0.3truecm}
\begin{align}
\label{eq:new}
& \overline c(s):=  \textbf{C}\left(s,W^*(s),X(s)\right),
\quad
\overline B(s):=\textbf{B}\left(s, W^*(s),X(s)\right),
\\
& \overline \theta(s):=\Theta    \left(s, W^*(s),X(s)\right)
\vspace{-0.3truecm}
\end{align}
is an optimal strategy.
\end{defn}

\vspace{-0.3truecm}

As usual, the candidate optimal feedback map is given by the
maximum points of the Hamiltonian. In our case these are given by
\eqref{MAXS_HAMILTONIAN_FIN_HOR} so, putting there
$\partial_ w \overline v, \partial_{ww} \overline v,
\partial_{wx_0} \overline v$ in place of $p_0, P_{11}, P_{12}$
we get the map:
\vspace{-0.3truecm}
\begin{align}
\label{EQ_DEF_FEEDBACK_MAP}
\begin{split}
\left\{\begin{array}{l}
\textbf{C}_{f}(t,w,x):=   f(t)^{-1}  \Gamma(t,w,x)
\\
\textbf{B}_{f}(t,w,x):= k^{ -b } f(t)^{-1}  \Gamma(t,w,x)
\\
\Theta_f(t,w,x):= (\sigma\sigma^\top)^{-1}
\frac{(\mu-r\mathbf 1)}{\gamma}\Gamma(t,w,x)
 -  (\sigma^\top)^{-1}\sigma_y  g(t)x_0.
\end{array}\right.\end{split}
\vspace{-0.3truecm}
\end{align}
We prove that this is an optimal feedback map.
For $(t,w,x)\in {\mathcal{H}}_+^{0, \tau_R}$, denote with $W_f^*(s)$ the unique solution of the closed loop equation \eqref{eq:newCL}
with $\left(\textbf{C}_f, \textbf{B}_f, \Theta_f\right)$
in place of $\left(\textbf{C}, \textbf{B}, \Theta\right)$
and set
\vspace{-0.2truecm}
\begin{equation}\label{DEF_GAMMA_INFTY_STAR}
\Gamma^*(s)= \Gamma\big(s,W_f^*(s), X(s)\big)  =W_f^*(s) + g(s)X_0(s)+ \langle h(s), X_1(s)\rangle.
\vspace{-0.2truecm}
\end{equation}
The control strategy associated with (\ref{EQ_DEF_FEEDBACK_MAP}) is then
\vspace{-0.2truecm}
\begin{align}\label{EQ_FEEDBACK_STRATEGIES_INF_RET}
&  \overline c_f(s):= \textbf{C}_f\left(s,W^*_f(s),X(s)\right) =f(s)^{-1}  \Gamma^*(s),  \\
&   \overline B_f(s):=\textbf{B}_f \left(s,W^*_f(s),X(s)\right) = k^{ -b } f(s)^{-1}  \Gamma^*(s),  \notag \\
&  \overline \theta_f(s):= \textbf{$\Theta$}_f
 \left(s,W^*_f(s),X(s)\right)=(\sigma\sigma^\top)^{-1}
 {(\mu-r\mathbf 1)}{\gamma^{-1}}\Gamma^*(s) \notag \\
 & \quad -(\sigma^\top)^{-1}\sigma_y  g(s)X_0(s). \notag
\end{align}
The next  Lemma ensures that this strategy is admissible.

\begin{lemma}
\label{36}
Let $(t,w,x) \in \mathcal{H}_+^{0, \tau_R}$. The process $\Gamma^*$ defined in \eqref{DEF_GAMMA_INFTY_STAR} satisfies the SDE
\vspace{-0.2truecm}
\begin{align}
\label{DYN_GAMMA*_PROPOSITION}
&{\rm d}  \Gamma^* (s) = \Gamma^* (s) \Big(  r + \delta +\frac{1}{\gamma}  |\kappa|^2 - f(s)^{-1} \big( 1+\delta k^{-b}\big) \Big)\, {\rm d}s \notag \\
& + \Gamma^* (s) \kappa^\top \sigma \, {\rm d}Z(s).
\vspace{-0.2truecm}
\end{align}
\end{lemma}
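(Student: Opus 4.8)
The plan is to compute the differential of $\Gamma^*(s)$ by plugging the feedback strategy \eqref{EQ_FEEDBACK_STRATEGIES_INF_RET} into the general SDE \eqref{eq:GammaProcessSDE} for $\overline\Gamma$ that was established in Lemma \ref{lm:Gammabar}-(i). That lemma tells us that for \emph{any} admissible $\pi$, the process $\overline\Gamma(s)=\Gamma(s,W(s),X(s))$ satisfies
\begin{align*}
{\rm d}\overline\Gamma(s)&=\left[(r+\delta)\overline\Gamma(s)-c(s)-\delta B(s)+\left(\theta^\top(s)\sigma+g(s)X_0(s)\sigma_y^\top\right)\kappa\right]{\rm d}s\\
&\quad+\left[\theta^\top(s)\sigma+g(s)X_0(s)\sigma_y^\top\right]{\rm d}Z(s),
\end{align*}
so the first step is simply to substitute $c=\overline c_f$, $B=\overline B_f$, $\theta=\overline\theta_f$ from \eqref{EQ_FEEDBACK_STRATEGIES_INF_RET} and simplify. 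The one subtlety is that Lemma \ref{lm:Gammabar}-(i) was stated for $\pi\in\Pi_0$ on the whole half-line, but its proof splits $[t,\infty)$ at $\tau_R$; here we only need the identity on $[t,\tau_R]$ (where $g$ is $C^1$), which is exactly the regime covered cleanly by that proof, so no new work is needed there.

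The key algebraic simplification is the cancellation of the $g(s)X_0(s)$ terms. With $\overline\theta_f(s)=(\sigma\sigma^\top)^{-1}(\mu-r\mathbf 1)\gamma^{-1}\Gamma^*(s)-(\sigma^\top)^{-1}\sigma_y g(s)X_0(s)$, we get
$$
\overline\theta_f^\top(s)\sigma=\gamma^{-1}\Gamma^*(s)(\mu-r\mathbf 1)^\top(\sigma\sigma^\top)^{-1}\sigma-g(s)X_0(s)\sigma_y^\top=\gamma^{-1}\Gamma^*(s)\kappa^\top-g(s)X_0(s)\sigma_y^\top,
$$
using $\kappa=\sigma^{-1}(\mu-r\mathbf 1)$ from \eqref{DEF_KAPPA}, hence $(\mu-r\mathbf 1)^\top(\sigma\sigma^\top)^{-1}\sigma=((\sigma^{-1}(\mu-r\mathbf 1))^\top=\kappa^\top$. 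Therefore $\overline\theta_f^\top(s)\sigma+g(s)X_0(s)\sigma_y^\top=\gamma^{-1}\Gamma^*(s)\kappa^\top$, which is precisely the bracket appearing in both the drift and the diffusion of \eqref{eq:GammaProcessSDE}. Substituting this into the diffusion term gives $\Gamma^*(s)\kappa^\top\sigma\,{\rm d}Z(s)$ (the $\sigma$ factor reappears because \eqref{eq:GammaProcessSDE} has $[\theta^\top\sigma+gX_0\sigma_y^\top]\,{\rm d}Z$ and, wait — more carefully, the diffusion coefficient is $\overline\theta_f^\top\sigma+gX_0\sigma_y^\top=\gamma^{-1}\Gamma^*\kappa^\top$, so the diffusion term is $\gamma^{-1}\Gamma^*\kappa^\top{\rm d}Z$; to match the stated form $\Gamma^*\kappa^\top\sigma\,{\rm d}Z$ one should double-check the exact placement of $\sigma$ in \eqref{eq:GammaProcessSDE} versus \eqref{DYN_GAMMA*_PROPOSITION} and in \eqref{DYN_GAMMA*_PB1}, treating this as a notational bookkeeping matter consistent with the analogous infinite-horizon computation). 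For the drift, the term $(\overline\theta_f^\top\sigma+gX_0\sigma_y^\top)\kappa=\gamma^{-1}\Gamma^*\kappa^\top\kappa=\gamma^{-1}\Gamma^*|\kappa|^2$, and $-\overline c_f(s)-\delta\overline B_f(s)=-f(s)^{-1}\Gamma^*(s)-\delta k^{-b}f(s)^{-1}\Gamma^*(s)=-f(s)^{-1}(1+\delta k^{-b})\Gamma^*(s)$. Collecting the three pieces gives exactly the drift $\Gamma^*(s)\big(r+\delta+\gamma^{-1}|\kappa|^2-f(s)^{-1}(1+\delta k^{-b})\big)$ in \eqref{DYN_GAMMA*_PROPOSITION}.

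I do not anticipate a genuine obstacle here: this is a direct substitution-and-simplification computation, and the main thing to be careful about is the matrix algebra identity $(\mu-r\mathbf 1)^\top(\sigma\sigma^\top)^{-1}\sigma=\kappa^\top$ and the consistent treatment of the $\sigma$ factor in the diffusion term (which is handled the same way as in the infinite-horizon Proposition \ref{PROPOSITION_MAIN_INF_HOR}). One should also note that \eqref{eq:GammaProcessSDE} is valid a priori only while the trajectory stays in $\mathcal{H}_+^{0,\tau_R}$, but since we are deriving the SDE that $\Gamma^*$ satisfies rather than analyzing its exit behavior, the computation is valid on $[t,\tau_R]$ up to the first exit time, and standard arguments extend the linear SDE \eqref{DYN_GAMMA*_PROPOSITION} past that (it has an explicit exponential-martingale-type solution). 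The proof therefore reduces to writing out the above substitution carefully.
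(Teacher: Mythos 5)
Your proposal is correct and takes essentially the same route as the paper: the paper substitutes the feedback strategies \eqref{EQ_FEEDBACK_STRATEGIES_INF_RET} into the closed-loop wealth equation and then adds ${\rm d}\big(g(s)X_0(s)+\langle h(s),X_1(s)\rangle\big)$ via \eqref{cbis} and the definition of $\beta$, which is precisely the content of Lemma \ref{lm:Gammabar}-(i) that you invoke directly. Your hesitation about the diffusion term is resolved in your favor: the coefficient is indeed $\gamma^{-1}\Gamma^*(s)\kappa^\top$, exactly as in the paper's intermediate step \eqref{CLOSED_LOOP_W_FINITE_HOR} and in the infinite-horizon dynamics \eqref{DYN_GAMMA*_PB1}, so the factor $\Gamma^*(s)\kappa^\top\sigma$ appearing in \eqref{DYN_GAMMA*_PROPOSITION} is a typographical slip in the statement rather than something your computation needs to reproduce.
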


\begin{theorem}
\textbf{(Verification Theorem and Optimal feedback Map)}
\label{th:VERIFICATION_THEOREM_INF_RET}
The equality  $V^{fh}=\overline v$ in ${\mathcal{H}}_+^{0, \tau_R}$ holds true and  the function $\left(\textbf{C}_f, \textbf{B}_f, \Theta_f\right)$ defined in (\ref{EQ_DEF_FEEDBACK_MAP}) is an optimal feedback map. Finally, for every $(t,w,x)\in \mathcal{H}_{+}^{0, \tau_R}$ the strategy $\overline\pi_f:=(\overline c_f,\overline B_f,\overline\theta_f)$ is the unique optimal strategy.
\end{theorem}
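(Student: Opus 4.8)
The plan is to exploit the fundamental identity of Proposition~\ref{pr:fundid} together with the a priori inequality $V^{fh}\le\overline v$ of Corollary~\ref{cr:FINITENESS_VALUE_FUNCTION}, showing that the candidate feedback $\overline\pi_f$ makes the nonnegative integrand in \eqref{EXP_VER_INF_RET_III} vanish.

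\textbf{Step 1 (admissibility of $\overline\pi_f$).} First I would check that $\overline\pi_f\in\Pi(t,w,x)$. By Lemma~\ref{36} the closed-loop total-wealth process $\Gamma^*$ solves the linear SDE \eqref{DYN_GAMMA*_PROPOSITION} with $\Gamma^*(t)=\Gamma(t,w,x)\ge 0$; hence $\Gamma^*(s)=\Gamma(t,w,x)\exp(\cdots)\ge 0$ for every $s\ge t$ (and $>0$ if $\Gamma(t,w,x)>0$), so the state constraint in \eqref{DEF_PI_SECOND_DEFINITION} holds on $[t,\tau_R]$. Being of geometric type, $\Gamma^*$ has finite moments of every order on $[t,\tau_R]$; combining this with boundedness of $g$ and $p$-integrability of $X_0$ (Lemma~\ref{LEMMA_EXISTENCE_g_h} and \cite[Theorem~1.130]{FABBRI_GOZZI_SWIECH_BOOK}), formula \eqref{EQ_FEEDBACK_STRATEGIES_INF_RET} gives $\overline c_f,\overline B_f\in L^1$ and $\overline\theta_f\in L^2$ on $[t,\tau_R]\times\Omega$. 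When $\gamma>1$ one further checks, using $\Gamma^*(s)>0$ a.s.\ and finiteness of $\mathbb E[(\Gamma^*(s))^{1-\gamma}]$ (log-normal tails, since $f>0$ is continuous), that $J^{fh}(t,w,x;\overline\pi_f)>-\infty$; this is exactly the assertion left open in the proof of Corollary~\ref{cr:FINITENESS_VALUE_FUNCTION}.

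\textbf{Step 2 (optimality and $\overline v=V^{fh}$).} By Proposition~\ref{PROPOSITION_GUESS_SOLVES_HJB_FIN_HOR} we have $\partial_w\overline v>0$ and $\partial_{ww}\overline v<0$ on $\mathcal{H}_{++}^{0,\tau_R}$, so along any trajectory started in the interior we are in case~\ref{case1}, where the maximizer of $\mathbb H^{fh}_{cv}$ is the unique point \eqref{MAXS_HAMILTONIAN_FIN_HOR}. Substituting $\partial_w\overline v,\partial_{ww}\overline v,\partial_{wx_0}\overline v$ for $p_0,P_{11},P_{12}$ there reproduces precisely the feedback map \eqref{EQ_DEF_FEEDBACK_MAP}, hence with $\pi=\overline\pi_f$ the integrand $\mathbb H^{fh}_{\max}-\mathbb H^{fh}_{cv}(\cdots;\overline\pi_f(s))$ in \eqref{EXP_VER_INF_RET_III} vanishes $ds\otimes\P$-a.e.\ on $[t,\overline\tau_t]$. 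The fundamental identity then reduces to $\overline v(t,w,x)=J^{fh}(t,w,x;\overline\pi_f)$, and with Corollary~\ref{cr:FINITENESS_VALUE_FUNCTION} and the definition of $V^{fh}$ we get $\overline v(t,w,x)=J^{fh}(t,w,x;\overline\pi_f)\le V^{fh}(t,w,x)\le\overline v(t,w,x)$. Thus $\overline v=V^{fh}$ and $\overline\pi_f$ is optimal on $\mathcal{H}_{++}^{0,\tau_R}$; the equality and optimality extend to $\partial\mathcal{H}_+^{0,\tau_R}$ using the boundary values fixed in Remark~\ref{rem_boundary} and the fact, from Lemma~\ref{lm:Gammabar}-(ii), that there the only admissible strategy is $\overline\pi_f$ itself.

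\textbf{Step 3 (uniqueness) and the main obstacle.} On the boundary, Lemma~\ref{lm:Gammabar}-(ii) already forces $c\equiv B\equiv 0$ and $\theta^\top\sigma+g X_0\sigma_y^\top\equiv 0$, i.e.\ the unique admissible strategy, which equals $\overline\pi_f$. In the interior, let $\pi$ be any optimal strategy; the fundamental identity forces $\mathbb E\int_t^{\overline\tau_t}\big(\mathbb H^{fh}_{\max}-\mathbb H^{fh}_{cv}(\cdots;\pi(s))\big)\,ds=0$, and nonnegativity of the integrand makes it vanish $ds\otimes\P$-a.e.\ on $[t,\overline\tau_t]$. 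Since $\Gamma^\pi(s)>0$ for $s<\tau_t$, we are in case~\ref{case1} with a unique maximizer, so $\pi(s)=(\mathbf C_f,\mathbf B_f,\Theta_f)\big(s,W^\pi(s),X(s)\big)$ for a.e.\ $s\in[t,\overline\tau_t)$; hence $W^\pi$ solves the closed-loop equation \eqref{eq:newCL}, and by its well-posedness $W^\pi=W^*_f$ and $\pi=\overline\pi_f$ on $[t,\overline\tau_t)$. If $\overline\tau_t=\tau_R$ this finishes; if $\overline\tau_t=\tau_t<\tau_R$ (possible only for $\gamma\in(0,1)$) then $\Gamma^\pi(\tau_t)=0$ and Lemma~\ref{lm:Gammabar}-(ii) forces both $\pi$ and $\overline\pi_f$ to the zero-type strategy on $[\tau_t,\tau_R]$, so they coincide a.e. I expect the main obstacle to be Step~1: extracting from \eqref{DYN_GAMMA*_PROPOSITION} the moment and negative-moment estimates on $\Gamma^*$ that simultaneously deliver integrability of $(\overline c_f,\overline B_f,\overline\theta_f)$, the state constraint, and (for $\gamma>1$) $J^{fh}(t,w,x;\overline\pi_f)>-\infty$, the last being needed to legitimately invoke Proposition~\ref{pr:fundid}. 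Everything afterwards is the standard interplay between the explicit HJB solution, the fundamental identity, and strict concavity of the current-value Hamiltonian in the controls.
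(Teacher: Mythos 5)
Your proposal is correct and follows essentially the same route as the paper's proof: admissibility and strict positivity of $\Gamma^*$ via the stochastic-exponential form from Lemma \ref{36}, vanishing of the nonnegative integrand in the fundamental identity \eqref{EXP_VER_INF_RET_III} for the feedback strategy, the sandwich $V^{fh}\le\overline v=J^{fh}(\cdot;\overline\pi_f)\le V^{fh}$ using Corollary \ref{cr:FINITENESS_VALUE_FUNCTION}, the boundary case and uniqueness via Lemma \ref{lm:Gammabar}-(ii). You merely spell out some steps the paper states tersely (the negative-moment estimate giving $J^{fh}>-\infty$ for $\gamma>1$, and the closed-loop well-posedness argument identifying any optimal $\pi$ with $\overline\pi_f$), which is consistent with, not different from, the paper's argument.
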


\begin{cor}
\label{cor2}
For any $(t,w,x) \in \mathcal{H}_+^{0, \tau_R}$ the value function $V$ defined in \eqref{V_t} is equal to $V^{fh}$.
\end{cor}


\vspace{-0.3truecm}

\section{Stating and interpreting of the main result}
\label{main_result_section}

In the present section we state and comment the main result
of the paper.

\begin{theorem}
\label{main_thm}
{Assume that Hypotheses \ref{hp:betarho} holds.}
\begin{itemize}
\item
The value function of Problem \ref{pb1}
(and hence of Problem \ref{p0} when $t=0$) is
\begin{equation}
\label{V_final}
V(t,w,x)= F(t)^{\gamma}\frac{\Gamma(t,w,x)^{1-\gamma}}{1-\gamma},
\, (t,w,x) \in \mathcal{H}_{+}^{0, \infty},
\vspace{-0.2truecm}
\end{equation}
where
\begin{equation*}
\vspace{-0.2truecm}
\Gamma(t,w,x):=w+g(t)x_0+ \langle h(t),x_1\rangle.
\end{equation*}
with
\begin{equation*}
\begin{cases}
 g(t )=   \mathbb I_{\{t < \tau_R\}}\int_t^{\tau_R } e^{-\beta(\tau-t)}
 \big(h(\tau,0)+1 \big)\, {\rm d} \tau,
 \\
 h(t, s)=  \mathbb I_{\{t < \tau_R\}}\int^s_{(s+t-\tau_R)\vee (-d)}   e^{- (r+ \delta) (s-\tau)}* \\
 \quad\quad *g(t+s-\tau)\phi(\tau)\,{\rm d}\tau,
 \end{cases}
\vspace{-0.2truecm}
\end{equation*}
\begin{align*}
& f(t)=\left((\hat \eta - \eta)
\exp\left(  -\frac{(\tau_R-t)^+}{\nu} \right) + \eta \right),
\\
& F(t) := e^{-\frac{ (\rho + \delta)t}{\gamma}}f(t),
\end{align*}
and
 \begin{align*}
& \eta :=(1+ \delta k^{-b}) \nu
\qquad
\hat \eta:= (K^{-b}+ \delta k^{-b}) \nu, \\
& \nu  := \frac{\gamma}{\rho + \delta -(1-\gamma) (r + \delta +\frac{\kappa^\top \kappa}{2\gamma })},
\qquad b= 1-\frac{1}{\gamma}.
\end{align*}
\item The optimal strategies of Problem \ref{p0} (hence where $t=0$)
are, for all $s\ge  0$,
\vspace{-0.2truecm}
$$
\overline c_f(s):=
K^{-bR(s)}	f(s)^{-1}  \Gamma^*(s),
\qquad
	\overline B_f(t):=
k^{ -b } f(s)^{-1}  \Gamma^*(s),
\vspace{-0.2truecm}
$$
\vspace{-0.2truecm}
	\begin{equation}\label{ASD3}
 \overline \theta_f(s):=
(
\sigma\sigma^\top)^{-1} \frac{\kappa}{\gamma}\Gamma^*(s)
-
g(s)y(s)(\sigma^\top)^{-1}\sigma_y  
\vspace{-0.2truecm}
	\end{equation}
where
\vspace{-0.2truecm}
	\begin{equation}
  \Gamma^*(s): =W^*(s) + g(s)y(s)+ \int_{-d}^0 h(s,\zeta) y(s+\zeta) \,  d\zeta,
  \, s \ge 0
  	\end{equation}
denotes the optimal total wealth, with financial wealth $W^*(\cdot)$  given by the solution of equation (\ref{DYN_W_X_INFINITE_RETIREMENT_II}) with controls defined in (\ref{ASD3}) above, and with labor income $y(\cdot)$ given by the solution of the second equation in (\ref{DYNAMICS_WEALTH_LABOR_INCOME}).
\item The optimal total wealth process has dynamics	
\vspace{-0.2truecm}
	\begin{align}
	& {\rm d}  \Gamma^* (s) = \Gamma^* (s) \kappa^\top \sigma \,{\rm d}Z(s) + \\
&  \Gamma^* (s)
\Big(  r + \delta +\frac{1}{\gamma}  \kappa^\top \kappa
	- f(s)^{-1} \big(K^{-bR(s)}+\delta k^{-b}\big) \Big)\, {\rm d}s \notag	
\vspace{-0.2truecm}
	\end{align}
\end{itemize}
\end{theorem}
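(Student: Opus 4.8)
The statement is a synthesis of the two-stage analysis of Sections \ref{Vih_sec}--\ref{Vfh_sec}, so the plan is essentially to collate the already proven pieces and glue them at the retirement time $\tau_R$. First I would treat the value function. For $t\ge\tau_R$, Proposition \ref{PROPOSITION_MAIN_INF_HOR} gives $V(t,w,x)=V^{ih}(t,w)=e^{-(\rho+\delta)t}\hat\eta^\gamma w^{1-\gamma}/(1-\gamma)$, while for $t\in[0,\tau_R]$, Corollary \ref{cor2} together with Theorem \ref{th:VERIFICATION_THEOREM_INF_RET} and the explicit form \eqref{EQ_GUESS_V} of $\overline v$ from Proposition \ref{PROPOSITION_GUESS_SOLVES_HJB_FIN_HOR} yields $V(t,w,x)=V^{fh}(t,w,x)=\overline v(t,w,x)=F(t)^\gamma\Gamma(t,w,x)^{1-\gamma}/(1-\gamma)$. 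To see that these two expressions are the single formula \eqref{V_final}, note that replacing $\tau_R-t$ by $(\tau_R-t)^+$ in \eqref{F.} does not change $f$ on $[0,\tau_R]$, while for $t\ge\tau_R$ one has $(\tau_R-t)^+=0$, hence $f(t)=(\hat\eta-\eta)+\eta=\hat\eta$, so $F(t)^\gamma=e^{-(\rho+\delta)t}\hat\eta^\gamma$; moreover $g(t)=0$ and $h(t)=0$ for $t\ge\tau_R$ by Lemma \ref{LEMMA_EXISTENCE_g_h}, so $\Gamma(t,w,x)=w$ there. The two branches therefore coincide with \eqref{V_final}, and they match continuously at $t=\tau_R$, which is exactly the terminal condition built into the HJB \eqref{HJB.}.

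Next I would assemble the optimal strategies and the wealth dynamics by concatenation at $\tau_R$. For the initial stage $s\in[0,\tau_R]$ the optimal feedback strategy is $\overline\pi_f=(\overline c_f,\overline B_f,\overline\theta_f)$ of \eqref{EQ_FEEDBACK_STRATEGIES_INF_RET}, and $\Gamma^*$ solves \eqref{DYN_GAMMA*_PROPOSITION}, both by Theorem \ref{th:VERIFICATION_THEOREM_INF_RET} and Lemma \ref{36}; for $s\ge\tau_R$ the optimal strategy is \eqref{OPTIMAL_STRATEGIES_FIN_HOR} and $\Gamma^*=W^*$ solves \eqref{DYN_GAMMA*_PB1}, by Proposition \ref{PROPOSITION_MAIN_INF_HOR}. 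That the concatenated strategy is admissible for Problem \ref{pb1} and attains $V$ is precisely the content of the proof of Corollary \ref{cor2}. It then remains to check that this concatenation is written by the single formulas \eqref{ASD3}: for $B$ both branches already read $k^{-b}f(s)^{-1}\Gamma^*(s)$; for $c$, the factor $K^{-bR(s)}$ reconciles $f(s)^{-1}\Gamma^*(s)$ on $[0,\tau_R)$ with $K^{-b}\hat\eta^{-1}W^*(s)$ on $[\tau_R,\infty)$, using again $f(s)=\hat\eta$ and $\Gamma^*(s)=W^*(s)$ for $s\ge\tau_R$; for $\theta$, the term $-(\sigma^\top)^{-1}\sigma_y\,g(s)\,y(s)$ vanishes for $s\ge\tau_R$, leaving $(\sigma^\top)^{-1}\gamma^{-1}\kappa\,\Gamma^*(s)=\tfrac{W^*(s)}{\gamma}(\sigma^\top)^{-1}\kappa$; here I use $(\sigma\sigma^\top)^{-1}(\mu-r\mathbf 1)=(\sigma^\top)^{-1}\kappa$ and the identification $X_0(s)=y(s)$, $X_1(s)(\zeta)=y(s+\zeta)$ from Proposition \ref{equiv_stoch} to pass from the $M_2$-variables to $y$. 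Finally, the single SDE for $\Gamma^*$ follows the same way: its drift coefficient $r+\delta+\gamma^{-1}\kappa^\top\kappa-f(s)^{-1}(K^{-bR(s)}+\delta k^{-b})$ reduces to the coefficient in \eqref{DYN_GAMMA*_PROPOSITION} for $s<\tau_R$ (where $K^{-b\cdot 0}=1$) and, using $f(s)=\hat\eta=(K^{-b}+\delta k^{-b})\nu$, to the coefficient $r+\delta+\gamma^{-1}\kappa^\top\kappa-\nu^{-1}$ of \eqref{DYN_GAMMA*_PB1} for $s\ge\tau_R$, while the diffusion parts agree across $\tau_R$ since $g(s)=0$ there.

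Since the heavy lifting — the explicit solution of the parabolic HJB with time-dependent state constraint, the fundamental identity, and the verification theorem — has already been done in Sections \ref{Vih_sec}--\ref{Vfh_sec}, the only real care needed here is the bookkeeping at the retirement time: checking that the two value functions and the two strategy families glue into the single $(\tau_R-t)^+$ and $R(s)$ formulas, and making sure the concatenated strategy remains admissible across $\tau_R$, so that no hidden use of a Dynamic Programming Principle is made beyond what the proof of Corollary \ref{cor2} already justifies. A minor subtlety is the behaviour on $\partial\mathcal{H}_+^{0,\infty}$: there Lemma \ref{lm:Gammabar}(ii) forces the unique admissible strategy, which is then trivially the one in \eqref{ASD3}, whereas on $\mathcal{H}_{++}^{0,\infty}$ the process $\Gamma^*$ is a strictly positive stochastic exponential and the feedback is interior, so the constraint is never binding along the optimal path.
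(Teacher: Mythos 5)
Your proposal is correct and follows essentially the same route as the paper, whose proof of Theorem \ref{main_thm} consists precisely of invoking Proposition \ref{PROPOSITION_MAIN_INF_HOR}, Theorem \ref{th:VERIFICATION_THEOREM_INF_RET} and Corollary \ref{cor2}. Your additional bookkeeping at $\tau_R$ (the $(\tau_R-t)^+$ and $K^{-bR(s)}$ reconciliations, $f(\tau_R)=\hat\eta$, $g=h=0$ after retirement) simply makes explicit what the paper leaves implicit.
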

From the results above we see that the post-retirement problem preserves the original structure of Merton's solution.
Indeed, since $g,h$ are null after $\tau_R$,  the agent chooses fixed fractions of financial {wealth $W$} to consume, leave as bequest, and invest in the risky asset.
The pre-retirement solution departs from the Merton baseline in several ways. First, as in
\cite{bodie1992labor,CV},
 financial wealth is replaced by total wealth, as the agent capitalizes the value of future wages and treat it as a traded asset (human capital). Second, the optimal fractions of total wealth consumed and left as bequest are time varying, as they reflect the residual time to retirement (e.g., \cite{DYBVIG_LIU_JET_2010}). Third, the allocation to risky assets features a negative hedging demand arising
 from the exposure to market risk channelled by the labor income process (e.g., \cite{CV}).  In our model, however, this hedging demand presents novel features and gives rise to more articulate trade-offs. This is for two main reasons.  First, only the `future' component of human capital drives hedging demand, but not its `past' component  (see \cite{BGPZ}). Second, the annuity factor appearing in the hedging demand, $g=g_1+g_2$ (see \eqref{g1}, \eqref{g2}),  takes into account not only the market risk channeled by future wages ($g_1$), but also the compounding effect of their delayed contribution to labor income ($g_2$).

We can analyse these trade-offs more precisely by exploiting the closed form solutions obtained in Theorem \ref{main_thm}.
Let us first compare the optimal strategies above with the case in which labor income does not present any delay in its drift ($\phi=0$). As in \cite{BGP}, we observe
that the  dynamics of total wealth, $\Gamma^*$, is not influenced by the path dependent component of the model,
in the sense that,
{\it ceteris paribus}, changing $\phi$ (and hence $y$)
leaves the dynamics of  total wealth unchanged at each point in time.
Therefore, denoting by $\widetilde \Gamma$ the total wealth in case of $\phi=0$, we have that, for any initial point $(w, x)$ the following holds:
\vspace{-0.2truecm}
\begin{align}
& \Gamma^*(0)-\widetilde \Gamma(0)  =x_0 \left(g(0)-\frac{1-e^{-\beta \tau_R}}{\beta}\right) + \\
& \<h(0, \cdot),x_1(0)\> =x_0 g_2(0)+ \<h(0, \cdot),x_1(0)\>,
\vspace{-0.2truecm}
\end{align}
with $g_2$ defined in \eqref{g2}. 
For any $t\geq 0$, we can therefore write
\vspace{-0.2truecm}
\begin{align}
& \Gamma^*(t)-\widetilde \Gamma(t)= \left(\Gamma^*(0)-\widetilde \Gamma(0)\right)\times \notag \\
& *e^{\left(r +\delta +\frac{1}{\gamma}  \kappa^\top \kappa
	- f(t)^{-1} \left( K^{-bR(t)}+\delta k^{-b}\right)\right) t + \kappa^\top \sigma  Z(t)},
\vspace{-0.2truecm}
\end{align}
showing that any difference in total wealth, and hence optimal consumption level
$c^*$ and bequest target $B^*$, is shaped by the quantity $x_0 g_2(0)+ <h(0,\cdot),x_1(0)>$.
Let us now focus on the risky asset allocation,  and denote by
$\Theta_{f,\phi}$ the feedback map introduced in \eqref{EQ_DEF_FEEDBACK_MAP}. We can then write
\vspace{-0.3truecm}
\begin{align}
& \Theta_{f,\phi}(t,w,x) -\Theta_{f,0}(t,w,x)  =
(\sigma\sigma^\top)^{-1} \times \\
& \left[
\left(\frac{\kappa}{\gamma}
- \sigma_y \right)g_2(t)
x_0 + \frac{\kappa}{\gamma} <h(t, \cdot), x_1(t)>
\right].
\vspace{-0.3truecm}
\end{align}

This result offers a rich set of empirical predictions for stock market participation, as path dependency of labor income is shown to have
two complementary effects. On the one hand, it improves the predictability of labor income and increases the demand for risky assets via the `past' component of human capital,
$\<h(t), x_1(t)\>$. On the other hand, exposure to market risk is compounded by the delayed contribution of future wages, as quantified  by the annuity factor $g_2$. If the sensitivity of
labor income to market shocks is high enough
(i.e., $\sigma_y>\gamma^{-1}\kappa$), a
negative
hedging demand arises,
but is counterbalanced by the positive contribution of the `past' component of human capital to the demand for risky assets. Moreover, as
the retirement date approaches, the annuity factor $g_2$ shrinks whereas the `past' component does not.
The model can therefore produce a rich variety of stock market participation
patterns.
To conclude, Figure~1 
offers some examples of risky asset allocations based on different parameter configurations consistent with the extant literature. We consider a single risky asset
and two types of labor income dynamics, one without delay (cases a1 and a2), and one with delay (cases b1 and b2). For the latter we assume $\phi=0.75\%$ and $d=5$.
We then consider two values for the labor income volatility parameter $\sigma_y$: a higher value of $10\%$ gives rise to a negative hedging demand (cases a1 and b1), whereas a lower value of $6\%$ is such that the condition $\sigma_y < \gamma^{-1}\kappa$ is satisfied.
The values of the market and preference parameters are based on  the contributions of  \cite{MP}, \cite{GM}, and \cite{BENZONI_ET_AL_2007}.
The examples depicted in Figure~1 shows that when a negative hedging demand is material (cases a1 and b1), the agent is initially short the risky asset and then gradually increases to a positive position as the bond-like nature of human capital becomes more pronounced with the approaching of the retirement date. A delay in labor income dynamics makes human capital more bond-like due to its predictable `past' component and raises the allocation to the risky asset. When the sensitivity of labor income to market shocks is small enough (cases a2 and b2), the allocation to the risky asset is positive and increasing throughout the agent's working life. Again, delayed labor income dynamics boost the allocation to the risky assset.

{\centering
\includegraphics[scale=0.9]{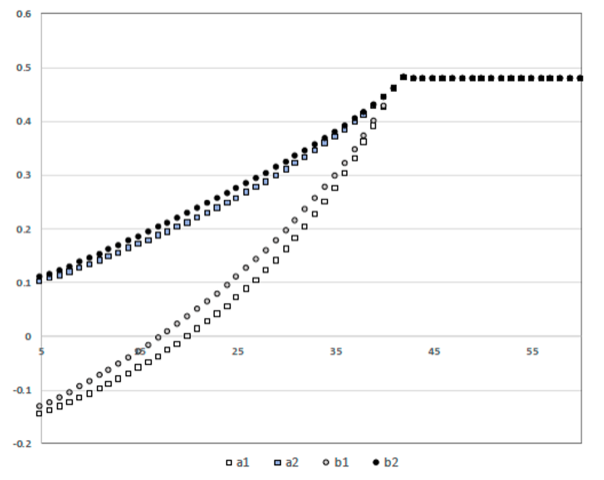}
}

\newpage
\appendix

\section{Proofs}

\begin{proof}[Proof of Proposition 3.1]
Existence and uniqueness of a continuous mild solution in $M_2$ of \eqref{INFINITE_DIMENSIONAL_STATE_EQUATION} and the equivalence between mild and weak solutions follow from \cite[Theorem 6.7]{DAPRATO_ZABCZYK_RED_BOOK} (see also \cite[Theorem 7.2]{DAPRATO_ZABCZYK_RED_BOOK}). In the same book, p. 160-161, the reader can find the precise definitions of mild and weak solution in this case. The equivalence property has been proven in Theorems 3.1-3.9 and Remark 3.7 in \cite{CHOJNOWSKA-MICHALIK_1978} (see also \cite{GOZZI_MARINELLI}, \cite{FabbriFederico14}).
\end{proof}

\begin{proof}[Proof of Proposition 3.2]
See \cite[Proposition 3.1]{BGP}.
\end{proof}

\begin{proof}of Lemma 3.3.
\textbf{Proof of (i).}
\\
By a change of variables let us rewrite (\ref{DEF_g_h}),
equivalently, as
\vspace{-0.2truecm}
\begin{equation}
\label{DEF_g_h_bis}
\begin{cases}
g(t )=   \mathbb I_{\{t < \tau_R\}}\int_t^{\tau_R } e^{-\beta(\tau-t)}
\big(h(\tau,0)+1 \big)\, {\rm d} \tau,   \  t \in[0, \infty),
\\
h(t,\zeta)=  \mathbb I_{\{t < \tau_R\}}\int_0^{(\tau_R-t)
\wedge(\zeta+d)}e^{- (r+ \delta) \tau}
g(t+\tau)\phi(\zeta-\tau)\,{\rm d}\tau,  \ \\
(t,\zeta)\in [0, \infty) \times [-d,0).
 \end{cases}
\vspace{-0.2truecm}
 \end{equation}
We apply the contraction principle depending on a parameter
(see \cite[Proposition C.11]{DapZab_ergodicity}).
Since $g$ and $h$ are identically zero for $t \ge \tau_R$
we work in the space
$\mathcal{M}:=C([0, \tau_R];\mathbb{R})
\times C([0, \tau_R]\times [-d,0];\mathbb{R})$, endowed with the norm
\begin{footnote}
{We endow $\mathcal{M}$ with this norm in order to make computations easier.}
\end{footnote}
\vspace{-0.5truecm}
\begin{equation*}
\|(\xi,\psi)\|_{\mathcal{M}}
= \left(\sup_{t \in [0, \tau_R]}|\xi(t)|^2
+\sup_{(t,\zeta) \in [0, \tau_R]\times [-d,0]}
|\psi(t,\zeta)|^2\right)^{\frac 12}.
\vspace{-0.3truecm}
\end{equation*}
We introduce the mapping
\vspace{-0.2truecm}
\begin{equation}\label{eq:Fdef}
 \mathcal{F}=(\mathcal{F}_1,\mathcal{F}_2):
 L^2(-d,0;\mathbb{R}) \times \mathcal{M} \rightarrow \mathcal{M}
\vspace{-0.2truecm}
 \end{equation}
\vspace{-0.2truecm}
 \begin{align*}
&(\phi, (g,h)) \mapsto \left(\int_t^{\tau_R } e^{-\beta(\tau-t)}
\big(h(\tau,0)+1 \big)\, {\rm d} \tau , \right.\\
&\left. \int_0^{(\tau_R-t)\wedge(\zeta+d)}e^{-(r+\delta) \tau}
g(t+\tau)\phi(\zeta-\tau)\,{\rm d}\tau \right). \notag
\vspace{-0.2truecm}
 \end{align*}
Notice that the term
$\mathcal{F}_2(\phi, (g,h))$
after a change of variables, is equal, on $[0,\tau_R]$,
to the r.h.s. of the second equation in \eqref{DEF_g_h}.
According to \cite[Proposition C.11]{DapZab_ergodicity}, if prove that
\vspace{-0.2truecm}
\begin{itemize}
\item [\textbf{(a)}] there exists $\alpha \in (0,1)$ such that  {for} $ (g,h), (\overline g, \overline h) \in \mathcal{M}$ the following inequality holds:
\vspace{-0.2truecm}
\begin{equation*}
\|\mathcal{F}(\phi, (g,h))-\mathcal{F}(\phi, (\overline g, \overline h))\|_{\mathcal{M}} \le \alpha \| (g,h)-(\overline g, \overline h)\|_{\mathcal{M}} \, ,
\vspace{-0.2truecm}
\end{equation*}
\item [\textbf{(b)}]  and $\mathcal{F}$ is a continuous map,
\end{itemize}
\vspace{-0.2truecm}
then there exists a unique  map $G \in C(L^2(-d,0;\mathbb{R});\mathcal{M})$ such that
\vspace{-0.2truecm}
\begin{equation*}
\mathcal{F}(\phi, G(\phi))=G(\phi), \quad \phi \in L^2(-d,0;\mathbb{R})
\vspace{-0.2truecm}
\end{equation*}
and statement (i) immediately follows.

Let us prove (a). Take $(g,h)$ and $(\overline g, \overline h)$ in $\mathcal{M}$. We start with the following estimates: for $t \in [0, \tau_R]$, by means of H\"older's inequality, we infer
\vspace{-0.2truecm}
\begin{align*}
& \left | \mathcal{F}_1(\phi, (g,h))-\mathcal{F}_1(\phi, (\overline g,\overline h))\right|^2
= \\
&
\left|\int_t^{\tau_R }e^{-\beta(\tau-t)}
\big(h(\tau,0)-\overline h(\tau,0) \big)\, {\rm d} \tau \right|^2 \le
\\
& \int_t^{\tau_R}e^{2\beta (\tau-t)}\, {\rm d} \tau \int_t^{\tau_R}
|h(\tau,0)-\overline h(\tau,0)|^2\, {\rm d}\tau \le\\
& C  \sup_{t \in [0, \tau_R]}
\sup_{\zeta \in [-d,0]}|h(t,\zeta)-\overline h(t,\zeta)|^2,
\vspace{-0.2truecm}
 \end{align*}
where $C$ is a positive constant depending on $\beta$ and $\tau_R$.
Let $(t,\zeta)\in[0, \tau_R] \times [-d,0]$.
Thanks to H\"older's inequality we get
\vspace{-0.3truecm}
 \begin{align*}
&| \mathcal{F}_2(\phi, (g,h))-\mathcal{F}_2(\phi,
(\overline g,\overline h))|^2
=\\
& \left| \int_0^{(\tau_R-t)\wedge(\zeta+d)}e^{-(r+ \delta) \tau}
(g(t+\tau)-\overline g(t+\tau))\phi(\zeta-\tau)\,{\rm d}\tau\right|^2\le
\\
& \sup_{\tau \in [0,(\tau_R-t)\wedge(\zeta+d)]}
e^{2(r+ \delta) \tau}
 \int_0^{(\tau_R-t)\wedge(\zeta+d)}
|\phi(\zeta-\tau)|^2\, {\rm d}\tau \\
&\times
\int_0^{(\tau_R-t)\wedge(\zeta+d)}|g(t+\tau)-\overline g(t+\tau)|^2
\, {\rm d}\tau \leq
\\
&  \tilde C \sup_{t \in [0, \tau_R]}|g(t)-\overline g(t)|^2,
\\[-0.8truecm]
 \end{align*}
where $\tilde C$ is a positive constant depending on $\tau_R$, $d$, $r, \delta$ and $\|\phi\|_{L^2}$.
If we now denote by $\overline C=C\vee\tilde C$, putting together the last two estimates we obtain, for every
$(t,\zeta)\in[0, \tau_R] \times [-d,0]$,
\vspace{-0.2truecm}
\begin{align*}
&\left| \mathcal{F}_1(\phi, (g,h))-\mathcal{F}_1(\phi, (\overline g,\overline h))\right|^2 + | \mathcal{F}_2(\phi, (g,h))-\mathcal{F}_2(\phi, (\overline g,\overline h))|^2
\\
&\le \overline C \left(\sup_{t \in [0, \tau_R]}|g(t)-\overline g(t)|^2 + \sup_{t \in [0, \tau_R]}\sup_{\zeta \in [-d,0]}
|h(t,\zeta)-\overline h(t,\zeta)|^2\right).
\vspace{-0.2truecm}
\end{align*}
Taking the supremum on $[0,\tau_R] \times [-d,0]$ in the l.h.s. we get
\vspace{-0.2truecm}
\begin{equation*}
\|\mathcal{F}(\phi, (h,g))-\mathcal{F}(\phi, (\overline g, \overline h))\|^2_{\mathcal{M}} \le \overline C \| (g,h)-(\overline g, \overline h)\|^2_{\mathcal{M}}.
\vspace{-0.2truecm}
\end{equation*}
One can easily verify that, if $\tau_R$ is small enough, then $\overline C <1$ and then (a) is verified. On the other hand, if $\tau_R$ is such that $\overline C\ge 1$, one can iterate the argument in the intervals $[0,T]$, $[T, 2T]$, etc, with $0<T<\tau_R$  such that $\overline C(T)<1$. Thus statement (a) is proved.

\smallskip

Let us now prove statement (b). Let $(\phi, (g,h))$ and $(\overline \phi,(\overline g, \overline h))$ belong to $L^2(-d,0;\mathbb{R}) \times \mathcal{M}$.  With the same computations as before we write
\vspace{-0.2truecm}
\begin{align*}
&\|\mathcal{F}(\phi, (h,g))-\mathcal{F}(\overline \phi, (\overline g, \overline h))\|^2_{\mathcal{M}} \le \\
& c_1  \sup_{t \in [0, \tau_R]}\sup_{\zeta \in [-d,0]}|h(t,\zeta)-\overline h(t,\zeta)|^2 +
\\
& \sup_{t \in [0, \tau_R]}\sup_{\zeta \in [-d,0]} \left| \int_0^{(\tau_R-t)\wedge(\zeta+d)}e^{-(r+\delta) \tau}  \times \right.\\
&\left.\left(g(t+\tau)\phi(\zeta-\tau)-\overline g(t+\tau))\overline\phi(\zeta-\tau)\right)\,{\rm d}\tau\right|^2.
\vspace{-0.2truecm}
\end{align*}
Adding and subtracting the term $\int_0^{(\tau_R-t)\wedge(s+d)}   e^{- (r+ \delta) \tau} g(t+\tau)\overline \phi(s-\tau)\,{\rm d}\tau$, by means again of H\"older's inequality, we can estimate
(here and below $\|g\|_\infty:=\sup_{t \in [0, \tau_R]}|g(t)|$)
\vspace{-0.2truecm}
\begin{align*}
&\left| \int_0^{(\tau_R-t)\wedge(\zeta+d)}   e^{- (r+ \delta) \tau} \left(g(t+\tau)\phi(\zeta-\tau)-\overline g(t+\tau))\overline\phi(\zeta-\tau)\right)\,{\rm d}\tau\right|^2
\\
&
\le 2 \left( \left| \int_0^{(\tau_R-t)\wedge(\zeta+d)}
e^{- (r+ \delta) \tau}g(t+\tau)(\phi-\overline \phi)(\zeta-\tau)
\,{\rm d}\tau\right|^2+ \right. \\
& \left. \left| \int_0^{(\tau_R-t)\wedge(\zeta+d)}
e^{- (r+ \delta) \tau}(g-\overline g)(t+\tau)
\overline \phi(\zeta-\tau)\,{\rm d}\tau\right|^2\right)
\\
& \le c_2 \|g\|_\infty
\|\phi-\overline \phi\|^2_{L^2([-d,0];\mathbb{R})}
+c_3 \|\overline \phi\|^2_{L^2([-d,0];\mathbb{R})}
\sup_{t \in [0 \tau_R]}|g(t)-\overline g(t)|^2.
\vspace{-0.2truecm}
\end{align*}
If we now denote by $\tilde C_1$ the maximum between $c_1$, $c_2\|g\|_\infty$ and $c_3 \|\overline \phi\|^2_{L^2([-d,0];\mathbb{R})}$, collecting the above estimates, we obtain
\vspace{-0.2truecm}
\begin{align*}
&\|\mathcal{F}(\phi, (h,g))-\mathcal{F}(\overline \phi, (\overline g, \overline h))\|^2_{\mathcal{M}}
\le \\
& \tilde C_1 \|(\phi, (g,h))-(\overline \phi, (\overline g, \overline h))\|^2_{L^2([-d,0];\mathbb{R})\times \mathcal{M}},
\vspace{-0.2truecm}
\end{align*}
and statement (b) then follows from straightforward considerations.

Assume now that $\phi$ is a.e. positive. Then, calling
\vspace{-0.2truecm}
$$
\mathcal{M}^+:=C([0, \tau_R];\mathbb{R}_+) \times C([0, \tau_R]\times [-d,0];\mathbb{R}_+)\subseteq \mathcal{M}
\vspace{-0.2truecm}
$$
it is immediate to see, from its definition, that the map $\mathcal{F}(\phi,\cdot)$ brings $\mathcal{M}^+$ into itself. Hence, from the proof above it must have a unique fixed point in $\mathcal{M}^+$, which, by uniqueness, must coincide with the fixed point found above in $\mathcal{M}$.

\smallskip

\textbf{Proof of (ii).}
The fact that $g\in C^1([0\tau_R];\R)$ immediately follows from the fact that $h$ is continuous, the first of (\ref{DEF_g_h}), and the Torricelli Theorem. It is clear, by differentiating, that the function $g$ solves the first equation in (\ref{CAUCHY_PROBLEM_g_h}). The last three equation of (\ref{CAUCHY_PROBLEM_g_h}) are obviously true by \eqref{DEF_g_h}. Let us then verify that $h$ solves the second equation in (\ref{CAUCHY_PROBLEM_g_h}). Let $\tau_R-t<\zeta+d$. Differentiating the second of \eqref{DEF_g_h_bis} and using that $g$ is continuous and $g(\tau_R)=0$ we have
\vspace{-0.2truecm}
\begin{align}
\frac{ \partial h(t,\zeta)}{\partial t }
&=
\int_0^{\tau_R-t}   e^{- (r+ \delta) \tau}
  g^{\prime}(t+\tau)\phi(\zeta-\tau)\,{\rm d}\tau
\label{PART_DIFF_h_t}
\vspace{-0.2truecm}
\end{align}
and
\vspace{-0.2truecm}
\begin{equation}\label{PART_DIFF_h_s}
\frac{ \partial h(t,\zeta)}{\partial \zeta}
= \int_0^{\tau_R-t}   e^{- (r+ \delta) \tau}
  g(t+\tau)\phi^{\prime}(\zeta-\tau)\,{\rm  d}\tau.
\vspace{-0.2truecm}
\end{equation}
Integration by parts formula yields
\vspace{-0.2truecm}
\begin{align*}
& - (r+ \delta)\int_0^{\tau_R-t}   e^{- (r+ \delta) \tau}
  g(t+\tau)\phi(\zeta-\tau) \,{\rm d}\tau
  =\\
& e^{- (r+ \delta)( \tau_R-t)}
  g(\tau_R)\phi(\zeta-\tau_R+t)
   -   g(t)\phi(\zeta)-
   \\
 &  \int_0^{\tau_R-t}   e^{- (r+ \delta) \tau}
  g^{\prime}(t+\tau)\phi(\zeta-\tau)\,{\rm d}\tau
  + \\
 & \int_0^{\tau_R-t}   e^{- (r+ \delta) \tau}
  g(t+\tau)\phi^{\prime}(\zeta-\tau) \,{\rm d}\tau.
\vspace{-0.2truecm}
\end{align*}
Recalling the definition of $h$ in \eqref{DEF_g_h_bis} and substituting (\ref{PART_DIFF_h_t}) and (\ref{PART_DIFF_h_s}) in the above expression, we get
\vspace{-0.2truecm}
\begin{equation*}
 -(r+\delta)h(t,\zeta)
=- \frac{ \partial h(t,\zeta)}{\partial t}
+\frac{ \partial h(t,\zeta)}{\partial \zeta} -g(t) \phi(\zeta).
\vspace{-0.2truecm}
\end{equation*}
Thus we immediately see that for $\tau_R-t<\zeta+d$ the function $h$ solves the second equation of (\ref{CAUCHY_PROBLEM_g_h}). Let $\tau_R-t>\zeta+d$. We have
\vspace{-0.2truecm}
\begin{equation}\label{PART_DIFF_h_tbis}
\frac{ \partial h(t,\zeta)}{\partial t }=
\int_0^{\zeta+d}   e^{- (r+ \delta) \tau}
  g^{\prime}(t+\tau)\phi(\zeta-\tau)\,{\rm d}\tau,
\vspace{-0.2truecm}
\end{equation}
and
\vspace{-0.2truecm}
\begin{align}\label{PART_DIFF_h_sbis}
& \frac{ \partial h(t,\zeta)}{\partial \zeta}
=
-e^{- (r+ \delta)(\zeta+d)}  g(t+\zeta+d)\phi(-d)
+
\\
& \int_0^{\zeta+d}   e^{- (r+ \delta) \tau}
  g(t+\tau)\phi^{\prime}(\zeta-\tau)\,{\rm  d}\tau. \notag
\vspace{-0.2truecm}
\end{align}
Integrating by parts as in the previous case shows that, also
for $\tau_R-t>\zeta+d$ the function
$h$ solves the second equation of (\ref{CAUCHY_PROBLEM_g_h}).
Moreover, since $\phi\in C^1([-d,0];\R)$ with $\phi (-d)=0$
we immediately see that, also when $\tau_R-t=\zeta+d$ the function $h$ is $C^1$ and satisfies the second equation of (\ref{CAUCHY_PROBLEM_g_h}).

\smallskip

\textbf{Proof of (iii).}
Let now $\phi \in L^2(-d,0;\R)$. The function $g$ belongs to $C^1([0\tau_R];\R)$ by the same argument of point $(ii)$.
First we recall that, by point (i) $h$ is continuous, hence bounded
in $[0,\tau_R]\times[-d,0]$, so, in particular,
we have $h(t,\cdot) \in L^2 ([-d,0], \mathbb R)$ for all $t\in[0,\tau_R]$.
We observe that the derivative $\frac{ \partial h(t,s)}{\partial t}$, defined above in the proof of (ii), makes still sense and defines a function
\vspace{-0.2truecm}
\begin{align*}
&[0,\tau_R] \to L^2(-d,0;\R), \\
&  t \mapsto \overline h(t,\cdot):=\int_0^{(\tau_R-t)\wedge (\cdot+d)}   e^{- (r+ \delta) \tau} g^{\prime}(t+\tau)\phi(\cdot-\tau)\,{\rm d}\tau.
\vspace{-0.2truecm}
\end{align*}
Using the fact that translations are continuous in the $L^2$ norm
we get that such function is continuous.
It remains to prove that it is the derivative of the map
$[0,\tau_R] \to L^2(-d,0;\R)$, $t \mapsto h(t,\cdot)$,
i.e. that the following limit holds in the $L^2$ norm
\vspace{-0.2truecm}
$$
\lim_{\alpha\to 0}
\frac{h(t+\alpha,\cdot)- h(t,\cdot)}{\alpha}
= \frac{ \partial h(t,\cdot)}{\partial t}.
\vspace{-0.2truecm}
$$
Let $\zeta\in[-d,0]$, $t\in(0,\tau_R)$ and $\alpha>0$. Since $\alpha$ must tend to $0$ we can suppose, without loss of generality that $t+\alpha\in(0,\tau_R)$. We have to study two cases.

First we suppose that $(\tau_R-t)<(\zeta+d)$ and $(\tau_R-(t+\alpha))<(\zeta+d)$. Then
\vspace{-0.2truecm}
\begin{align}
\label{eq:alfa1}
&\frac{h(t+\alpha,\zeta)-h(t,\zeta)}{\alpha}=
\\
\notag
&=\frac{1}{\alpha}\left[\int_0^{\tau_R-(t+\alpha)}e^{-(r+\delta) \tau}
g(t+\alpha+\tau)\phi(\zeta-\tau)\,{\rm d}\tau \right.
-\notag\\
& \left.\int_0^{\tau_R-t}   e^{- (r+ \delta) \tau}
g(t+\tau)\phi(\zeta-\tau)\,{\rm d}\tau\right]=
\\
\notag
&= \frac{1}{\alpha}\int_{\tau_R-(t+\alpha)}^{\tau_R-t}
e^{-(r+ \delta) \tau}
g(t+\alpha+\tau)\phi(\zeta-\tau)\,{\rm d}\tau
+ \notag\\
& \int_0^{\tau_R-(t+\alpha)}   e^{- (r+ \delta) \tau}
\frac{g(t+\alpha+\tau)-g(t+\tau)}{\alpha}\phi(\zeta-\tau)\,{\rm d}\tau.
\vspace{-0.2truecm}
\end{align}
Recalling that $g$ is continuous and $g(t)=0$ for $t\ge\tau_R=0$,
the first integral above is $0$.

Moreover, since $g$ is continuous differentiable, we get
\vspace{-0.2truecm}
\begin{align*}
& \lim_{\alpha\to0}\int_0^{\tau_R-(t+\alpha)}   e^{- (r+ \delta) \tau}
\frac{g(t+\alpha+\tau)-g(t+\tau)}{\alpha}\phi(\zeta-\tau)
\,{\rm d}\tau= \\
& \int_0^{\tau_R-t}   e^{- (r+ \delta) \tau}
g'(t+\tau)\phi(\zeta-\tau)\,{\rm d}\tau.
\vspace{-0.2truecm}
\end{align*}
Now we study the case when $(\tau_R-t)>(\zeta+d)$
 and $(\tau_R-(t+\alpha))>(\zeta+d)$. We have
\vspace{-0.2truecm}
\begin{align}
\label{eq:alfa2}
&\frac{h(t+\alpha,\zeta)-h(t,\zeta)}{\alpha}
=
\\
\notag
&=\frac{1}{\alpha}\left[\int_0^{\zeta+d}   e^{- (r+ \delta) \tau} g(t+\alpha+\tau)\phi(\zeta-\tau)\,{\rm d}\tau - \right. \notag\\
& \left. \int_0^{\zeta+d}   e^{- (r+ \delta) \tau}
g(t+\tau)\phi(\zeta-\tau)\,{\rm d}\tau\right]=
\\
\notag
&=\int_0^{\zeta+d}   e^{- (r+ \delta) \tau}
\frac{g(t+\alpha+\tau)-g(t+\tau)}{\alpha}\phi(\zeta-\tau)\,{\rm d}\tau.
\vspace{-0.2truecm}
\end{align}
Recalling that $g$ is continuous differentiable we get
\vspace{-0.2truecm}
\begin{align*}
& \lim_{\alpha\to0}\int_0^{\zeta+d}   e^{- (r+ \delta) \tau}
\frac{g(t+\alpha+\tau)-g(t+\tau)}{\alpha}\phi(\zeta-\tau)
\,{\rm d}\tau= \\
& \int_0^{\zeta+d}   e^{- (r+ \delta) \tau}
g'(t+\tau)\phi(\zeta-\tau)\,{\rm d}\tau.
\vspace{-0.2truecm}
\end{align*}
By combining the two cases above we obtain:
\vspace{-0.2truecm}
\begin{align}
\label{lim_rapp_incre_h}
& \lim_{\alpha\to0} \frac{h(t+\alpha,\zeta)-h(t,\zeta)}{\alpha}= \\
&\int_0^{(\tau_R-t)\wedge(\zeta+d)}   e^{- (r+ \delta) \tau} g'(t+\tau)\phi(\zeta-\tau)\,{\rm d}\tau,\ \text{for a.e. }\zeta\in[-d,0]. \notag
\vspace{-0.2truecm}
\end{align}
The claim follows if we prove that the limit above holds in
$L^2([-d,0])$. To do this we apply dominated convergence theorem.
Indeed, calling $\|g'\|_\infty:=\sup_{t\in [0,\tau_R]}|g'|$, we get,
from \eqref{eq:alfa1}-\eqref{eq:alfa2},
\vspace{-0.2truecm}
\begin{align*}
&\frac{h(t+\alpha,\zeta)-h(t,\zeta)}{\alpha}
\le \\
& \int_0^{(\tau_R-t)\wedge(\zeta+d)}\!\!\!\!\!  e^{-(r+ \delta) \tau}
\left|\frac{g(t+\alpha+\tau)-g(t+\tau)}{\alpha}\right|
|\phi(\zeta-\tau)|\, {\rm d}\tau \\
& \le \|g'\|_\infty \int_0^{\zeta+d} |\phi(\zeta-\tau)|\, {\rm d}\tau .
\vspace{-0.2truecm}
\end{align*}
Since $\phi\in L^2(-d,0)$ the map
$\zeta\mapsto \int_0^{\zeta+d} |\phi(\zeta-\tau)|\, {\rm d}\tau$
belongs to $L^2(-d,0;\R)$ and the claim follows.

\smallskip
\textbf{Proof of (iv).}
%
First, by the second of \eqref{DEF_g_h} we have $h(t,-d)=0$ for all $t\ge 0$. Thus, to prove that, given any $t\ge 0$,
$\big(g(t),h(t, \cdot)\big)$ is in $\mathcal D(A^*)$.  it is enough to show that $h(t,\cdot) \in W^{1,2} ([-d,0], \mathbb R)$.
For $t\ge \tau_R$ this is clear since in this case $g=h=0$.
Let then $t\in[0,\tau_R]$.
We take $\{\phi_n\}\subset C^1([-d,0])$ such that
$\phi_n(-d)=0$ for all $n\in\N$ and $\phi_n\to\phi$
in $L^2(-d,0;\R)$ as $n\to\infty$. Let us define
\vspace{-0.2truecm}
\[
h_n(t, \zeta):= \int_0^{(\tau_R-t)\wedge(\zeta+d)}   e^{- (r+ \delta) \tau} g(t+\tau)\phi_n(\zeta-\tau)\,{\rm d}\tau,
\vspace{-0.2truecm}
\]
for $ (t,\zeta)\in [0, \infty) \times [-d,0)$.
It is clear that $h_n(t,\cdot)\to h(t,\cdot)$ in  $L^2(-d,0;\R)$. Moreover in step $(ii)$ we proved that
\vspace{-0.2truecm}
\[
\frac{ \partial h_n(t,\zeta)}{\partial \zeta}
= \int_0^{(\tau_R-t)\wedge(\zeta+d)}
e^{-(r+ \delta) \tau} g(t+\tau)\phi_n^{\prime}(\zeta-\tau)
\,{\rm  d}\tau.
\vspace{-0.2truecm}
\]
By applying integration by parts to the above formula we get
\vspace{-0.2truecm}
\begin{align*}
& \frac{ \partial h_n(t,\zeta)}{\partial \zeta} = 
g(t)\phi_n(\zeta)+\\
& \int_0^{(\tau_R-t)\wedge(\zeta+d)}
e^{-(r+\delta)\tau}g'(t+\tau)\phi_n(\zeta-\tau)\,{\rm d}\tau-(r+\delta)h_n(t,\zeta).
\vspace{-0.2truecm}
\end{align*}
By the same computations of point $(iii)$ we obtain, using the closeness of the derivative operator,
\vspace{-0.2truecm}
\begin{align*}
& L^2-\lim_{n\to\infty}\frac{ \partial h_n(t,\cdot)}{\partial \zeta}
=g(t)\phi(\cdot)+\\
& \int_0^{(\tau_R-t)\wedge(\cdot+d)}e^{-(r+\delta)\tau}
g'(t+\tau)\phi(\cdot-\tau)\,{\rm d}\tau
-(r+\delta)h(t,\cdot) = \\
& \frac{ \partial h(t,\cdot)}{\partial \zeta}.
\vspace{-0.2truecm}
\end{align*}
This implies that $h(t,\cdot)\in W^{1,2}(-d,0;\R)$,  which gives the
first part of the claim.
For the second part, by the definition of
$A^*$ and by the equation just above,
\vspace{-0.2truecm}
\begin{align*}
& \|A^*(g(t),h(t)) \|_{M_2}\le
|\mu g(t)+h(t,0)|+ \\
& \left\|-\frac{ \partial h(t,\cdot)}{\partial \zeta}
+   g(t)\phi(\cdot)\right\|_{L^2}   \\
& \le  \mu \|g\|_\infty+ C
\left(\|h\|_\infty+\|g'\|_\infty \|\phi\|_{L^2}\right)
\vspace{-0.2truecm}
\end{align*}
for some $C>0$. This immediately gives:\\
$\sup_{t \in [0, \tau_R)}\|(g(t),h(t,\cdot)) \|_{\mathcal{D}(A^*)}
< \infty$.

Finally, the fact that the couple $(g,h)$ satisfy
\eqref{CAUCHY_PROBLEM_g_h} simply follows by the above
computations and by the definition of $A^*$.

\smallskip

\textbf{Proof of (v).}
By \eqref{CAUCHY_PROBLEM_g_h} and \eqref{DEF_g_h} we have
\vspace{-0.2truecm}
\begin{equation*}
0= g'(\tau_R) \ne \lim_{t \rightarrow \tau_R^-}\left[ \beta g(t)-h(t,0)+1\right]=1.
\vspace{-0.2truecm}
\end{equation*}
\end{proof}

\begin{proof} of Proposition 3.4.
\\
Thanks to Proposition \ref{equiv_stoch}
the second equation in \eqref{DYNAMICS_WEALTH_LABOR_INCOME}
and equation \eqref{INFINITE_DIMENSIONAL_STATE_EQUATION} with initial
datum $t=0$, are equivalent and we have the identification
\vspace{-0.2truecm}
\begin{equation*}
 (X_0(s),X_1(s)(\zeta))=\left(y(s),y(s+\zeta)\right),
 \quad s\ge 0,\; \zeta\in[-d,0),
\vspace{-0.2truecm}
\end{equation*}
as stated in \eqref{eq:Yy}. It is then immediate to see that the formulations for the Human Capital \eqref{1} and \eqref{1bis} are equivalent.

We now prove that representation \eqref{1bis} holds true.
We want to apply It\^o's formula to compute the differential of the r.h.s. of \eqref{1bis} for $s\in [0,\tau_R)$,  as for $s\ge \tau_R$ \eqref{1bis} is clearly true.
This is possible by Lemma \ref{LEMMA_EXISTENCE_g_h}-(iii).
In this case we can apply
Ito's formula \cite[Proposition~1.165]{FABBRI_GOZZI_SWIECH_BOOK}
to the process
$\langle (g(s),h(s)),(X_0(s),X_1(s))\rangle{_{M_2}}$, thus obtaining,
for $s \in  [0,\tau_R)$,
\begin{align}
\label{c}
&{\rm d} \langle \big(g(s),  h(s)  \big), \big( X_0 (s), X_1(s)\big)
\rangle_{M_2}
\notag\\
&=
\langle (g^{\prime}(s),h'(s) ),
( X_0 (s), X_1(s))\rangle_{M_2} \, {\rm d}s
 \\
&+
\langle  A^* \big(g(s),  h(s)  \big),  (X_0(s), X_1(s))
    \rangle_{M_2}\,  {\rm d}s
+   g(s) X_0(s)   \sigma_y^\top {\rm d} Z(s).
\notag
\end{align}
By Lemma \ref{LEMMA_EXISTENCE_g_h}-(iv) the pair $(g,h)$ is the unique solution to (\ref{CAUCHY_PROBLEM_g_hnew}).
Thus we can rewrite \eqref{c} as
\begin{align}
\label{cbis}
& {\rm d} \langle \big(g(s),  h(s)  \big),
 X (s)   \rangle_{M_2} =
 \notag\\
 &
\Big[[(\beta + \mu_y) g(s)-1]X_0(s)
+ (r+ \delta)\langle h(s),X_1(s)\rangle\Big] {\rm d}s
 \\ &
+    g(s) X_0(s)   \sigma_y^\top {\rm d} Z(s). \notag
\end{align}
Let us now define
\small{\begin{align}
\label{d}
\overline{H}(s)&:= \xi(s) \big( g(s) X_0(s) +  \langle h(s)
, X_1(s)\rangle\big)\\
& =\xi(s) \<(g(s),h(s)),X(s)\>_{M_2}.\notag
\end{align}
}

Recalling 
SDE  \eqref{DYN_STATE_PRICE_DENSITY} satisfied by the pre-death state-price density $\xi$, we can apply the standard It\^{o} formula to $\overline H$ (seen as the product of two one-dimensional processes) and use \eqref{cbis} to obtain:
\vspace{-0.1cm}
\begin{align}
\label{Hbar}
&{\rm d}\overline H(s) =
{\rm d}
\left[\xi(s) \<(g(s),h(s)),X(s)\>_{M_2}\right] \\
&= -\xi(s)  X_0(s) {\rm d}s
+ \left[  -  \overline H(s)\kappa^\top+    \xi(s)  g(s) X_0(s)
\sigma_y^\top\right]{\rm d} Z(s), \notag
\end{align}
\vspace{-0.1cm}
where we used \eqref{eq:defbeta}.
Let us now fix $0 \le s \le \tau_R$.
From \eqref{Hbar} we infer, integrating on $[s,\tau_R]$,
the following result:
\begin{align}
\label{ITO_INT_FIN_RET}
& \overline  H(\tau_R)
-\overline H(s)
=- \int_s^{\tau_R }   \xi(u)  X_0(u)\, {\rm  d}u +\\
& + \int_s^{\tau_R }  \left[  - \overline H(u) \kappa^\top
 +\xi(u) g(u)X_0(u)  \sigma_y^\top \right ]    {\rm d} Z(u).\notag
\end{align}
Now observe that $g$ and $h$ are bounded and $\xi$ and $X$ are
$p$-integrable processes for any $p\ge 1$,
see e.g. \cite[Theorem 1.130]{FABBRI_GOZZI_SWIECH_BOOK}. Hence,
we can take the conditional mean in (\ref{ITO_INT_FIN_RET}) and, using the fact that the stochastic integral appearing there is a
martingale starting at zero, we obtain
\vspace{-0.2truecm}
 \begin{equation}
 \label{ITO_INT_FIN_RET_I}
\mathbb E \left[  \overline H(\tau_R)  \mid  \mathcal F_s \right]
-\overline H(s)
= -   \mathbb E \left[ \int_s^{\tau_R}   \xi(u)  X_0(u) \,{\rm d}u
\mid  \mathcal F_s \right] .
\end{equation}
Since, by the definition of $g$ and $h$, $(g(\tau_R),h(\tau_R))=0\in M_2$
(see equation (\ref{CAUCHY_PROBLEM_g_h})) we get, using \eqref{d},
\vspace{-0.2truecm}
\begin{equation*}
\overline H( \tau_R )=0, \qquad \mathbb P-a.s..
\vspace{-0.2truecm}
\end{equation*}
Therefore \eqref{ITO_INT_FIN_RET_I} becomes
\vspace{-0.2truecm}
\begin{equation}
\label{e}
\overline H(s) = \mathbb E \left[\int_s^{\tau_R}
\xi(u)  X_0(u) \,{\rm d}u\mid \mathcal F_s \right].
\vspace{-0.2truecm}
\end{equation}
Equations \eqref{HC}, \eqref{d}, and \eqref{e} then yield the desired result.
\end{proof}

\begin{proof} of Lemma 4.3.
\\
\textbf{(i).}
We take $t<\tau_R$ as the case $t \ge \tau_R$ is immediate from
the argument below.

Since the function $g'$ has a discontinuity in $\tau_R$
(and thus we can not apply Ito's formula directly on the time interval
$[t,+\infty)$), we consider separately the case $s\in[t,\tau_R)$ and
$s\in[\tau_R,+\infty)$.

When $s\in[\tau_R,+\infty)$, we have
${\rm d} \overline {\Gamma}(s)={\rm d} W(s)$
and by the first equation in \eqref{DYNAMICS_WEALTH_LABOR_INCOME},
the identity \eqref{eq:GammaProcessSDE} immediately follows, recalling
that $g\equiv0$ on $[\tau_R,+\infty)$.

When $s\in[0,\tau_R)$, we have that
\vspace{-0.2truecm}
\begin{equation*}
{\rm d} \overline {\Gamma}(s)={\rm d} W(s)+ {\rm d} \left( g(s )X_0(s) + \langle    h(s)  , X_1(s) \rangle \right),
\vspace{-0.2truecm}
\end{equation*}
and proceeding as in the proof of Proposition
\ref{PROPOSITION_EVALUATION_DELAYED_LABOR_INCOME} we can write
\vspace{-0.1truecm}
\begin{align*}
&{\rm d} \<(g(s),h(s)),X(s)\>  =\\
& = \left[[(\beta + \mu_y)g(s)-1]X_0(s)
+ (r+ \delta)\langle h(s),X_1(s)\rangle
\right] \,{\rm d}s\\
&
+    g(s) X_0(s)   \sigma_y^\top \, {\rm d} Z(s).
\vspace{-0.2truecm}
\end{align*}
Using the dynamics of $W$ given by the first equation
in \eqref{DYN_W_X_INFINITE_RETIREMENT_II} and
\eqref{DEF_KAPPA} we get
\eqref{eq:GammaProcessSDE}. This proves $(i)$.

\textbf{(ii).}
This proof, given (i) above,
is completely analogous to the proof of \cite[Lemma 4.5(ii)]{BGP}
and we omit it for brevity.
\end{proof}

\begin{proof} of Proposition 5.1.
\\
For the proof of the result we appeal to \cite[Theorem 5.1]{BGP}
fixing $(X_0,X_1)\equiv 0$, for example setting the initial condition
for $X$ at $0$, i.e. $x=0\in M_2$ in their notation. When
we apply this result we just have to pay attention to the different initial
 time of the problem. This is not a big issue since the problem is
 autonomous and it is sufficient to rescale time. We thus obtain that
  the value function associated with Problem \ref{pb2} is as in \eqref{Vfh}.
 \end{proof}

\begin{proof} of Proposition 6.4.
\\
Recall that, by definition, $\mathcal{H}_{++}^{0, \tau_R}$ is the set where $\Gamma$ is strictly positive. Thanks to the linearity of $\Gamma$, the function $\overline v$ is twice continuously differentiable in $(w,x)$ and continuously differentiable in $t$ (see Lemma \ref{LEMMA_EXISTENCE_g_h}-(iii)).
The derivatives that appear in the Hamiltonian are easily computed:
\begin{align*}
&\partial_t\overline v(t,w,x)= \frac{\gamma}{1-\gamma} \Gamma^{1-\gamma}
F^{\prime}(t)F^{\gamma-1}(t) +
g^{\prime}(t)x_0 F(t)^{\gamma}\Gamma^{-\gamma} + \\
&  + \langle
h'(t),x_1\rangle
F(t)^{\gamma}\Gamma^{-\gamma},
\\
&\partial_w\overline v(t,w,x)=F(t)^{\gamma}\Gamma^{-\gamma},
\,
\partial_{x_0}\overline v(t,w,x)= F(t)^{\gamma}\Gamma^{-\gamma}g(t),
\\
&
\partial_{x_1}\overline v(t,w,x)= F(t)^{\gamma}\Gamma^{-\gamma} h(t),
\, \notag \\
& \partial_{ww}\overline v(t,w,x)= - \gamma F(t)^{\gamma}\Gamma^{-\gamma-1},
\\
& \partial v_{wx_0}\overline v(t,w,x)=   - \gamma F(t)^{\gamma}
\Gamma^{-\gamma-1} g(t),\\
&
\partial v_{x_0 x_0}\overline v(t,w,x)= - \gamma F(t)^{\gamma}
\Gamma^{-\gamma-1}  g(t)^2.
\end{align*}
Thanks to Lemma \ref{LEMMA_EXISTENCE_g_h}-(iv),
also requirement \ref{regularity} in
Definition \ref{DEF_SUPERSOLUTION_INF_RET} is satisfied.
Moreover, by Hypothesis \ref{hp:betarho} we get
$\partial_w\overline v>0$ and $\partial_{ww}\overline v<0$ on
$\mathcal{H}_{++}^{0, \tau_R}$.
Therefore we can consider the simplified form \eqref{new_HJB} for the HJB equation.
Using Proposition \ref{adj} we compute:
\begin{align*}
\langle x,A^*\partial_x\overline v (t,w,x)  \rangle_{M_2} &= F(t)^{\gamma} \Gamma^{-\gamma}  \langle x, A^*(g(t),  h(t,\cdot)) \rangle_{M_2}
\end{align*}
Substitute the above expression for   $\langle x,A^*\partial_x \overline v(t,w,x)  \rangle_{M_2}$ and $\overline v$ with all its derivatives in \eqref{new_HJB}. Now, after multiplying both terms by $F(t)^{-\gamma}\Gamma^{\gamma}$, we get:
\begin{align}\label{HJB_BEFORE_T}
& -\frac{\gamma}{1-\gamma} \Gamma\frac{F^{\prime}(t)}{F(t)}
- \<(g^{\prime}(t),h'(t)),x\>_{M_2}
=(r+\delta) w +x_0  \notag\\
&
+\langle x, A^*(g(t),h(t)) \rangle_{M_2}
 +  \frac{\gamma}{1-\gamma} e^{-\frac{\rho+\delta}{\gamma}t}
 F(t)^{-1} \Gamma(1+\delta k^{-b}  ) \notag \\
& + \frac{\kappa^\top \kappa}{2\gamma }\Gamma
-x_0 g(t)\kappa^\top \sigma_y.
\end{align}
Thanks to \eqref{CAUCHY_PROBLEM_g_hnew},
equality (\ref{HJB_BEFORE_T}) can be rewritten as
\begin{align*}
& \frac{\gamma}{1-\gamma} \Gamma\frac{F^{\prime}(t)}{F(t)} +
\left((\beta+\mu_y) g(t)- 1\right)x_0 +
(r + \delta)\langle h(t) ,x_1\rangle
\\
&+(r+\delta) w + x_0 + \frac{\gamma}{1-\gamma}
e^{-\frac{\rho+\delta}{\gamma}t} F(t)^{-1} \Gamma(1+\delta k^{-b}  )
+\notag \\
 & \frac{\kappa^\top \kappa}{2\gamma }\Gamma  -x_0g(t)
 \kappa^\top \sigma_y  = 0,
\end{align*}
which, using also the definition of $\beta$ in \eqref{eq:defbeta}
simplifies as
 \begin{align}\label{EQ_F}
& \left(\frac{\gamma}{1-\gamma} \frac{F^{\prime}(t)}{F(t)} + (r + \delta) +  \frac{\gamma}{1-\gamma} e^{-\frac{\rho+\delta}{\gamma}t} F(t)^{-1} (1+\delta k^{-b}  ) + \right. \notag \\
&\left. + \frac{\kappa^\top \kappa}{2\gamma }\right)\Gamma  =0,
\end{align}
Recalling \eqref{fbnu} and \eqref{F.} we compute the derivative of $F$
and get that \eqref{EQ_F} is satisfied in $\mathcal{H}_{++}^{0, \tau_R}$.
\end{proof}

\begin{proof} of Lemma 6.6.
\\
The result is trivial when $\gamma \in (1, \infty)$.
Let then take $\gamma \in (0,1)$.
We have, by \eqref{EQ_GUESS_V} and \eqref{Gamma_bar},
\begin{align*}
&\mathbb{E} \left[ \sup_{s \in [t,\tau_R]}\overline v
\left(s, (W^{w,x}(s;t, \pi), X^x(s;t))\right)\right] \\
 &= \mathbb{E} \left[ \sup_{s \in [t,\tau_R]}F^{\gamma}(s)
 \frac{\overline \Gamma^{1-\gamma}(s) }{1-\gamma}\right]\\
& \le \max_{s \in [t,\tau_R]}F^{\gamma}(s) \mathbb{E}
 \left[ \sup_{s \in [t,\tau_R]}\frac{\overline \Gamma^{1-\gamma}(s)}
 {1-\gamma}\right] <\infty.
\end{align*}
As from Lemma \ref{lm:Gammabar},
the time evolution of the total wealth process $\overline \Gamma(\cdot)$,
on $[t, \tau_R]$, is described by the SDE \eqref{eq:GammaProcessSDE}.
Theorem 1.130 in \cite{FABBRI_GOZZI_SWIECH_BOOK}
implies $\mathbb{E} \left[\sup_{s \in [t, \tau_R]}\overline \Gamma (s) \right]
 < \infty$ which gives the claim.
\end{proof}

\begin{proof} of Proposition 6.7.
\\
Recalling notation \eqref{Gamma_bar}, let us call
\vspace{-0.2truecm}
\begin{equation}
\label{tauN}
\tau_N=\inf\left\{{s\ge t}: \;\overline\Gamma(s)\le 1/N\right\}.
\vspace{-0.2truecm}
\end{equation}	
For $N$ sufficiently large we have $\tau_N>0$. Moreover notice that $\tau_N \rightarrow  \tau_t$, $\mathbb{P}$-a.s., so, still
$\mathbb{P}$-a.s., $\tau_N\wedge \tau_R \rightarrow  \overline\tau_t$.
Let us fix any such $N$. We apply It\^{o}'s formula to the process $\overline v\big(s,\mathcal{X}(s; \pi)\big)$ for $s\in[t,\tau_N \wedge \tau_R]$.
This is possible since, in this interval, we know that the process
$(s,\mathcal{X}(s;t,\pi))$ belongs to the set $\{\Gamma(s,w,x)\ge 1/N\}$
where $\overline{v}$ and its derivatives are
bounded on bounded sets.\footnote{Indeed,
as highlighted in the proof of \cite[Proposition 4.11]{BGP},
to apply Ito's formula here one should slightly adapt the proof of  \cite[Proposition 1.164]{FABBRI_GOZZI_SWIECH_BOOK}.}

We set, for brevity, $\mathcal{X}(\cdot;t, \pi):=\mathcal{X}(\cdot)$,
$W(\cdot;t, \pi):=W(\cdot)$,
$X(\cdot;t):=X(\cdot)$,
and we apply \cite[Proposition 1.164]{FABBRI_GOZZI_SWIECH_BOOK} to obtain
\vspace{-0.3truecm}
\begin{align*}
&\overline v\left(\tau_N \wedge \tau_R,
\mathcal{X}(\tau_N \wedge\tau_R)\right)
- \overline v(t,w,x)= \\
& =
\int_t^{\tau_N \wedge \tau_R} {\rm d}s \,
\Big\{\partial_s\overline v \big(s,\mathcal{X}(s))
+ \partial_{x_0 w} \overline v \big(s,\mathcal{X}(s)\big)X_0(s)
\theta^\top(s) \sigma  \sigma_y
\\
&+ \partial_w  \overline v \big(s,\mathcal{X}(s)\big)
\big[ (r+ \delta) W(s)+ \theta^\top(s) (\mu-r \mathbf 1)  + \\
&  X_0(s) - c(s) - \delta B(s) \big]  +
\langle X(s),A^*\partial_x\overline v \big(s,\mathcal{X}(s)\big)  \rangle_{M_2} + \\
& \frac{1}{2}\partial_{ww} \overline v \big(s,\mathcal{X}(s)\big)  \theta^\top \sigma  \sigma^\top \theta
+\frac{1}{2} \partial _{x_0 x_0} \overline v \big(s,\mathcal{X}(s)\big) \sigma_y^\top \sigma_y X^2_0 (s)
\Big\}\, +
\\
&  \int_t^{\tau_N \wedge \tau_R} \left[ \partial_{w} \overline v\big(s,\mathcal{X}(s)\big) \theta^\top(s) \sigma +\partial_{x_0} \overline v\big(s,\mathcal{X}(s)\big)  X_0 (s) \sigma_y^\top \right] \, {\rm d}Z(s).
\vspace{-0.3truecm}
\end{align*}

Since the function $\overline v$ solves the HJB equation (\ref{HJB.}), by the definition of $\mathbb H^{fh}_{cv}, \mathbb H^{fh}_{\max} $ in (\ref{DEF_H_CV_FIN_HOR}) and (\ref{DEF_HAM_max FIN_HOR}) respectively, we can rewrite the above equality as
\vspace{-0.3truecm}
\begin{multline}\label{EXP_VERIFICATION_FIN_RET}
\begin{split}
&\overline v
\big(\tau_N \wedge \tau_R,\mathcal{X}(\tau_N \wedge \tau_R)\big)
- \overline v\big(t,w,x\big) +\\
& +  \int_{t}^{\tau_N \wedge \tau_R} e^{-(\rho+ \delta) s }
\Big( \frac{ c(s)^{1-\gamma}}{1-\gamma}
+ \delta \frac{\big(k B(s)\big)^{1-\gamma}}{1-\gamma}\Big)\, {\rm d}s=
\\&
{\int_t^{\tau_N \wedge \tau_R}
 \left[
-\mathbb H^{fh}_{\max}\big(s,X_0(s),
\partial_w\overline v,
\partial_{ww} \overline v,
\partial_{wx_0}\overline v\big) \right.}
\\
&
{\left.+\mathbb H^{fh}_{cv}
\big(s,X_0(s),\partial_w\overline v,
\partial_{ww} \overline v,
\partial_{wx_0}\overline v
;\pi(s)
\big)  \right] \,{\rm d}s}
\\
&+ \int_t^{\tau_N \wedge \tau_R}
\left[  (\partial_{w} \overline v)
\theta^\top(s) \sigma
+(\partial_{x_0} \overline v)
X_0 (s)\sigma_y^\top \right] \, {\rm d}Z(s).
\end{split}
\vspace{-0.2truecm}
\end{multline}
Let us now take the expected value on both sides of equation \eqref{EXP_VERIFICATION_FIN_RET}.
The stochastic integral is a martingale
(by the definition of $\tau_N$ in \eqref{tauN}, by the
explicit form of $\overline v$ and its derivatives,
and by the integrability of $\theta$ and $X$) so its expectation is $0$.
The integrand at the left hand side has always finite expectation:
when $\gamma \in (0,1)$ by the integrability properties of $c$ and $B$,
while for $\gamma>1$, since we assumed $J^{fh}(t,w,x;\pi)>-\infty$.
We then get that also the expectation of the first term
of the right hand side (which exists since the integrand is negative)
is finite. Hence, using the same notation as in the statement:
\vspace{-0.2truecm}
\begin{align}
\label{new:e}
&\mathbb{E}\left[\overline v\big(\tau_N \wedge \tau_R,\mathcal{X}(\tau_N \wedge \tau_R)\big)\right]-
\overline v\big(t,w,x\big) \\
&
+ \mathbb{E} \left[
\int_{t}^{\tau_N \wedge \tau_R} e^{-(\rho+ \delta) s }
\Big( \frac{ c(s)^{1-\gamma}}{1-\gamma}+ \notag    \delta \frac{\big(k B(s)\big)^{1-\gamma}}{1-\gamma}\Big)\, {\rm d}s\right]
\notag
\\
&= -\mathbb{E} \bigg[\int_t^{\tau_N \wedge \tau_R}
 \Big[
\mathbb H^{fh}_{\max}(s,X_0(s),
\partial_w\overline v,
\partial_{ww} \overline v,
\partial_{wx_0}\overline v)
\notag
\\
&-\mathbb H^{fh}_{cv}\big(s,X_0(s),
\partial_w\overline v,
\partial_{ww} \overline v,
\partial_{wx_0}\overline v
;\pi(s)\big)  \Big] \,{\rm d}s\bigg].
\notag
\end{align}
Now let $N \rightarrow \infty$.
The first term of the left hand side converges thanks to Lemma \ref{LEMMA_LIMIT_AT_INFTY_GUESS_VALUE_FUNCTION_INFINITE_RETIREMENT}, the continuity of $\overline v$, and dominated convergence.
The last term of the left hand side
converge to a finite number:
when $\gamma \in (0,1)$ by the integrability properties of $c$ and $B$,
while for $\gamma>1$, since we assumed $J^{fh}(t,w,x;\pi)>-\infty$.
Moreover the right hand side
converge thanks to the constant sign of the integrands
and monotone convergence: the limit is finite
since the left hand side is finite. Hence
\vspace{-0.2truecm}
\begin{align}
\label{EXP_VERIFICATION_FIN_RET.}
&\mathbb{E}\left[\overline v\big(\overline\tau_t,
\mathcal{X}(\overline\tau_t)\big)\right]-
\overline v\big(t,w,x\big)
+ \\
& \mathbb{E} \left[
\int_{t}^{\overline\tau_t} e^{-(\rho+ \delta) s }
\Big( \frac{ c(s)^{1-\gamma}}{1-\gamma}+
\delta \frac{\big(k B(s)\big)^{1-\gamma}}{1-\gamma}\Big)\, {\rm d}s\right]
\notag
\\
&= -\mathbb{E} \left[\int_t^{\overline\tau_t}
 \left[
\mathbb H^{fh}_{\max}\big(s,X_0(s),
\partial_w\overline v,
\partial_{ww} \overline v,
\partial_{wx_0}\overline v \big) \right.\right.
\\
\notag
&\left.\left.
  -\mathbb H^{fh}_{cv}\big(s,X_0(s),
\partial_w\overline v,
\partial_{ww} \overline v,
\partial_{wx_0}\overline v
;\pi(s)\big)  \right] \,{\rm d}s\right].
\vspace{-0.2truecm}
\end{align}
The claim now follows rearranging the terms and observing that
\vspace{-0.2truecm}
\begin{align*}
& J^{fh}(t,w, x;\pi)
=\\
&\mathbb{E}\left[ \int_{t}^{\overline \tau_t}
e^{-(\rho+ \delta)s} \Big( \frac{c(s)^{1-\gamma}}{1-\gamma} + \delta \frac{\big(k B(s)\big)^{1-\gamma}}{1-\gamma}\Big) \,{\rm d}s\right]
+\\
&  \mathbb{E}\left[ \overline v\big(\overline \tau_t, \mathcal{X}
(\overline \tau_t)\big)\right].
\vspace{-0.2truecm}
\end{align*}
The above is obvious when $\overline \tau_t= \tau_R$
by the form of $\overline v$ in
Proposition \ref{PROPOSITION_GUESS_SOLVES_HJB_FIN_HOR}.
When $\overline \tau_t< \tau_R$ (which can happen only for
$\gamma \in (0,1)$ due to our assumptions), it must be, by
\eqref{EQ_GUESS_V},
$\mathbb E \left[\overline v\big(\overline \tau_t,
\mathcal{X}(\overline\tau_t)\big)\right] = 0$,
and, by Lemma \ref{lm:Gammabar}-(ii),
\vspace{-0.2truecm}
\begin{equation*}
\mathbb I_{\{\overline \tau<s\}}(\omega) c(s,\omega)=0, \quad \mathbb I_{\{\overline\tau<s\}}(\omega)B(s,\omega)=0,
\vspace{-0.2truecm}
\end{equation*}
holds $ ds\otimes \P-a.e. \; in \; [0,\tau_R] \times \Omega$,
and the claim still follows.
\end{proof}

\begin{proof} of Corollary 6.8.
\\
For $\gamma \in (0,1)$ the positivity of
the integrand in \eqref{EXP_VER_INF_RET_III} implies,
for every
$(t,w,x)\in {\mathcal{H}}_{++}^{0,\tau_R}$ and $\pi\in \Pi(t,w,x)$,
$\overline v(t,w,x) \ge J^{fh}(t,w,x;\pi)$.
Then the claim follows by the definition of $V^{fh}$.
The same argument works for $\gamma >1$ if we prove that there exists
a strategy in $\Pi(t,w,x)$ such that $J^{fh}(t,w,x;\pi)>-\infty$.
Such strategy is the one given in
\eqref{EQ_FEEDBACK_STRATEGIES_INF_RET} below.
\end{proof}

\begin{proof} of Lemma 31.
\\
Substituting \eqref{EQ_FEEDBACK_STRATEGIES_INF_RET} into the first equation in \eqref{eq:newCL} we obtain
\begin{align}
\label{CLOSED_LOOP_W_FINITE_HOR}
& {\rm d} W^*_f(s) = \Big[ \Gamma^*(s)\left[  \frac{|\kappa|^2}{\gamma }   - f(s)^{-1}\big(1+ \delta k^{-b}\big)  \right] +\\
& W^*_f(s)(r+\delta) - \kappa^\top \sigma_y  g(s) X_0(s) + \notag\\ & X_0(s) \Big]\,{\rm d}s+ \left[ \frac{\Gamma^*(s)}{ \gamma } \kappa^\top - g(s) X_0(s)   \sigma_y^\top \right]\, {\rm d}Z(s).\notag
\end{align}
Since
${\rm d}\Gamma^*(s)={\rm d}W_f^*(s)+{\rm d} \Big( g(s)X_0(s) + \langle    h(s)  , X_1(s) \rangle \Big),
$
recalling \eqref{cbis} and the definition of $\beta$ in
\eqref{eq:defbeta}, we immediately get the claim.
\end{proof}

\begin{proof} of Theorem 6.12.
\\
First take $(t,w,x) \in \partial\mathcal{H}_+^{0 , \tau_R}$.
By equation \eqref{DYN_GAMMA*_PROPOSITION} we thus have that
for every $s \in[t, \tau_R]$, $\Gamma^*(s)=0$, $\P$-a.s..
This in turns implies, by \eqref{EQ_FEEDBACK_STRATEGIES_INF_RET}, that
\vspace{-0.2truecm}
$$
\overline c_f \equiv 0,\qquad
\overline B_f \equiv 0,\qquad
\overline \theta_f \equiv - g(t) X_0(s) (\sigma^\top)^{-1} \sigma_y.
\vspace{-0.2truecm}
$$
It follows from Lemma \ref{lm:Gammabar}-(ii) that this is the
only admissible strategy, hence it must be optimal and,
clearly, $V^{fh}=\overline v$ in such boundary points.

Now take $(t,w,x) \in \mathcal{H}_{++}^{0, \tau_R}$.
First we observe that,
by Lemma \ref{36}, $\Gamma^*(\cdot)$ is a stochastic exponential, thus $\P$-a.s. strictly positive for any strictly positive initial condition $\Gamma^*(t)=\Gamma(t,w,x)$. Hence,
the constraint in (\ref{CONSTRAINT_FIN_RET}) is always satisfied
with strict inequality and that $(\overline c_f,\overline B_f)$ are strictly positive
by \eqref{EQ_FEEDBACK_STRATEGIES_INF_RET}.
This implies that $\overline{\pi}_f$ is admissible and that
$J^{fh}(t,w,x;\overline{\pi}_f)>-\infty$ when $\gamma >1$
and the fundamental identity \eqref{EXP_VER_INF_RET_III} can be used.

Now, since the feedback map \eqref{EQ_DEF_FEEDBACK_MAP} is obtained taking the maximum points of the Hamiltonian
the fundamental identity \eqref{EXP_VER_INF_RET_III} becomes
\vspace{-0.3truecm}
\begin{equation*}
\overline v(t,w,x)=J^{fh}\left(t,w,x;\overline\pi_f\right).
\vspace{-0.2truecm}
\end{equation*}
Hence, Corollary \ref{cr:FINITENESS_VALUE_FUNCTION} and the definition of the value function yields
\vspace{-0.2truecm}
$$
V^{fh}(t,w,x)\le \overline v(t,w,x)=J^{fh}\left(t,w,x;\overline\pi_f\right)\le V^{fh}(t,w,x)
\vspace{-0.2truecm}
$$
which immediately gives $V^{fh}(t,w,x)=J^{fh}\left(t,w,x;\overline\pi_f\right)$, hence optimality of $\overline \pi_f$.

We prove uniqueness. When $(t,w,x)\in \partial\mathcal{H}_+^{0, \tau_R}$ the claim follows from Lemma \ref{lm:Gammabar}-(ii).
Let $(t,w,x)\in \mathcal{H}_{++}^{0,\tau_R}$.
Since $\overline v=V^{fh}$, an optimal strategy $\pi$ at
$(t,w,x)$ must satisfy
$\overline v(t,w,x)=J^{fh}(t,w,x;\pi)$, which implies, substituting in \eqref{EXP_VER_INF_RET_III}, that the integral in \eqref{EXP_VER_INF_RET_III} is zero. This implies that, on $[t,\overline \tau_t]$ we have $\pi=\overline\pi_f$, $dt \otimes \P$-a.e.. This provides uniqueness, as, for $\overline\pi_f$
we have $\overline \tau_t=\tau_R$.
\end{proof}

\begin{proof} of Corollary 6.13.
\\
Lemma \ref{lm:Gammabar} implies that equality holds
on $\partial \mathcal{H}_+^{0, \tau_R}$.
Let $(t,w,x)\in \mathcal{H}_{++}^{0, \tau_R}$. From the definition of
$V$, breaking the integral at $\tau_R$ we see that $V\le V^{fh}$.
Moreover, the strategy found concatenating the optimal strategies of the
Problems 3 and 4 is admissible for Problem 2, hence $V\ge V^{fh}$.
\end{proof}

\begin{proof}  of Theorem 7.1.
\\
The result is a direct consequence of Proposition \ref{PROPOSITION_MAIN_INF_HOR}, Theorem \ref{th:VERIFICATION_THEOREM_INF_RET} and Corollary \ref{cor2}.
\end{proof}

%
%

\end{document}